\documentclass[a4paper]{amsart}

\usepackage{amssymb,latexsym}
\usepackage{amsthm}
\usepackage{amsmath}
\usepackage{amsfonts}
\usepackage{mathrsfs}
\usepackage{mathdots}
\usepackage{graphicx}
\usepackage[all]{xy}
\usepackage{chngcntr}
\usepackage{fancyhdr}
\usepackage[top=1in,bottom=1in,left=1.25in,right=1.25in]{geometry}

\usepackage{todonotes}
\usepackage{tikz-cd}
\usepackage{stmaryrd}

\newtheorem{theorem}{Theorem}[section]

\newtheorem{remark}[theorem]{Remark}

\newtheorem{proposition}[theorem]{Proposition}
\newtheorem{corollary}[theorem]{Corollary}
\newtheorem{lemma}[theorem]{Lemma}
\newtheorem{example}[theorem]{Example}

\def\Q{\mathbb{Q}}
\def\F{\mathbb{F}}

\def\Z{\mathbb{Z}}
\def\A{\mathbb{A}}
\def\H{\mathcal{H}}
\def\C{\mathbb{C}}
\def\G{\mathbb{G}}

\def\D{\mathcal{D}}
\def\E{\mathcal{E}}

\def\Fc{\mathcal{F}}
\def\Fl{\mathscr{F}\ell}

\def\M{\mathcal{M}}
\def\N{\mathbb{N}}
\def\Nc{\mathcal{N}}
\def\Ol{\mathcal{O}}

\def\Bun{\mathrm{Bun}}

\def\Fil{\mathrm{Fil}}
\def\Hom{\mathrm{Hom}}
\def\Lie{\mathrm{Lie}}

\def\Mod{\mathrm{Mod}}
\def\Modif{\mathrm{Modif}}

\def\GL{\mathrm{GL}}
\def\Gal{\mathrm{Gal}}
\def\Perf{\mathrm{Perf}}

\def\Rep{\mathrm{Rep}}
\def\Spd{\mathrm{Spd}}
\def\Gr{\mathrm{Gr}}
\def\Sht{\mathrm{Sht}}
\def\Sh{\mathrm{Sh}}
\def\Eu{\underline{\mathcal{E}}}
\def\K{\mathbf{K}}

\def\pr{\mathrm{pr}}
\def\Spa{\mathrm{Spa}}

\def\Spec{\mathrm{Spec}}
\def\Vect{\mathrm{Vect}}
\def\deg{\mathrm{deg}}
\def\rank{\mathrm{rank}}
\def\HT{\mathrm{HT}}
\def\SL{\mathrm{SL}}

\def\phmod{\varphi\mathrm{-Mod}}

\def\lan{\langle}
\def\ran{\rangle}

\def\lra{\longrightarrow}
\def\ra{\rightarrow}
\def\ov{\overline}
\def\ul{\underline}
\def\wh{\widehat}
\def\wt{\widetilde}
\def\st{\stackrel}
\def\tr{\textrm}

\usepackage[pdftex]{hyperref}

\begin{document}

\title{Harder-Narasimhan strata and $p$-adic period domains}
\author{Xu Shen}
\date{}

\address{Morningside Center of Mathematics, Academy of Mathematics and Systems Science, Chinese Academy of Sciences\\
	No. 55, Zhongguancun East Road\\
	Beijing 100190, China}
\address{University of Chinese Academy of Sciences\\Beijing 100049, China}
\email{shen@math.ac.cn}

\renewcommand\thefootnote{}
\footnotetext{2020 Mathematics Subject Classification. Primary: 11G18; Secondary: 14G20.}

\renewcommand{\thefootnote}{\arabic{footnote}}
\keywords{Harder-Narasimhan stratifications, $p$-adic period domains, Newton stratifications, Fargues-Fontaine curve, $B_{dR}^+$-affine Grassmannians}

\begin{abstract}
We revisit the Harder-Narasimhan stratification on a minuscule $p$-adic flag variety, by the theory of modifications of $G$-bundles on the Fargues-Fontaine curve. We compare the Harder-Narasimhan strata with the Newton strata introduced by Caraiani-Scholze. As a consequence, we get further equivalent conditions in terms of $p$-adic Hodge-Tate period domains for fully Hodge-Newton decomposable pairs. Moreover, we generalize these results to arbitrary cocharacters case by considering the associated $B_{dR}^+$-affine Schubert varieties. Applying Hodge-Tate period maps,  our constructions give applications to $p$-adic geometry of Shimura varieties and their local analogues. 
	
\end{abstract}

\maketitle
\setcounter{tocdepth}{1}
\tableofcontents

\section{Introduction}
This paper is a continuation and complement of our previous work \cite{CFS}. We look at ``$p$-adic period domains'' from a different perspective (we refer to \cite{CFS} and the references therein for more background on $p$-adic period domains). We also extend the main result of \cite{CFS} to general (not necessarily minuscule) cocharacters.
\\

More precisely, we revisit the Harder-Narasimhan stratifications on $p$-adic flag varieties, which were defined using the theory of filtered vector spaces with additional structures by Rapoport \cite{R}, and Dat-Orlik-Rapoport \cite{DOR} Parts 1 and 2. In fact, in \cite{R} only the maximal open strata were considered, while in  \cite{DOR}  Parts 1 and 2 these Harder-Narasimhan stratifications were mainly investigated for reductive groups over finite fields. In this paper, we are interested in the $p$-adic setting, motivated by the work of Fargues \cite{F2} in the context of Harder-Narasimhan polygons for $p$-divisible groups.  The pure linear algebra context here suggests that it should be easier to access than the usual context of filtered isocrystals with additional structures as \cite{Ra, RZ} and \cite{DOR} Part 3.
Under base change, filtered vector spaces can be viewed as filtered isocrystals with trivial underlying isocrystals.  Thus we can study these $p$-adic Harder-Narasimhan strata by plugging them into the setting of Rapoport-Zink \cite{RZ} chapter 1 and Dat-Orlik-Rapoport \cite{DOR} Part 3, where the theory of filtered isocrystals with additional structures serves as the basic tool. In a different direction, the open Harder-Narasimhan strata were also defined and studied in certain cases by van der Put and Voskuil in \cite{van}.

Thanks to the recent developments in $p$-adic Hodge theory \cite{FF, SW},  
now we can apply the theory of modifications of $G$-bundles on the Fargues-Fontaine curve to study the Harder-Narasimhan strata (in \emph{minuscule} $p$-adic flag varieties). This new method has the advantage that it is easier and natural to compare the Harder-Narasimhan stratification with some other important stratifications on (minuscule) $p$-adic flag varieties, such as the Newton stratification introduced by Caraiani-Scholze in \cite{CS} section 3, where the Fargues-Fontaine curve also plays the key role. 
From the point of view of period morphisms of local Shimura varieties, we consider these Harder-Narasimhan and Newton stratifications as constructions on the \emph{Hodge-Tate side}. The purpose of this paper is to understand the relation between these two stratifications. 
In our previous work \cite{CFS},  we studied the Harder-Narasimhan strata and Newton strata on the \emph{de Rham side} (although we mostly restricted to the open strata: the weakly admissible locus and the admissible locus). At the end, we will see the theories on both sides are very closely related, in the sense they are actually \emph{dual} to each other. Therefore, we make the main result of \cite{CFS} into a more symmetric form here. Moreover, we can extend these results to general (arbitrary cocharacter) case, replacing $p$-adic flag varieties by the $B_{dR}^+$-affine Schubert cells constructed in \cite{SW}, which are key examples of \emph{diamonds}. 
\\

To be more precise, let us fix some notations. Let $G$ be a reductive group over\footnote{Throughout this paper, the base field for our reductive groups is $\Q_p$. However one can replace it by any finite extension of $\Q_p$ and all the results are still true.} $\Q_p$ and $\{\mu\}$ a conjugacy class of cocharacters $\mu: \G_{m,\ov{\Q}_p}\ra G_{\ov{\Q}_p}$. Attached to $(G,\{\mu\})$, we have flag varieties $\Fl(G,\mu)$ and $\Fl(G,\mu^{-1})$, parametrizing ``$G$-filtrations'' of type $\mu$ and $\mu^{-1}$ respectively, defined over a finite extension $E$ of $\Q_p$. We view them as \emph{adic spaces} over $\breve{E}$, the completion of the maximal unramified extension of $E$.  We assume that $\mu$ is \emph{minuscule} at this moment for simplicity. 
\\

Consider the $p$-adic flag variety $\Fl(G,\mu^{-1})$ first. By studying modifications of the trivial $G$-bundle over the Fargues-Fontaine curve, we can introduce two stratifications as follows. The first one is the Newton stratification introduced by Caraiani-Scholze in \cite{CS} section 3. Let $C|\breve{E}$ be any algebraically closed perfectoid field and $X=X_{C^\flat}$ be the Fargues-Fontaine curve over $\Q_p$ attached to the tilt $C^\flat$ equipped with a closed point $\infty$ with residue field $C$. A very basic construction (originally due to Beauville-Laszlo) is that, to each point $x\in \Fl(G,\mu^{-1})(C)$, we can attach a modified $G$-bundle at $\infty$ \[\E_{1,x}\] of the trivial $G$-bundle $\E_1$  over $X$. Recall that by Fargues's main theorem in \cite{F3}, we have a bijection $B(G)\st{\sim}{\lra} H_{\tr{et}}^1(X,G),\quad [b']\mapsto [\E_{b'}]$, where $B(G)$ is the set of $\sigma$-conjugacy classes in $G(\breve{\Q}_p)$, cf. \cite{Kot1, Kot2}.
The isomorphism class of $\E_{1,x}$ thus defines a point in $B(G)$, which in fact lies in the Kottwitz set $B(G,\mu)$ (\cite{Kot2} section 6). This gives the Newton stratification \[ \Fl(G,\mu^{-1})=\coprod_{[b']\in B(G,\mu)}\Fl(G,\mu^{-1})^{Newt=[b']}.\] Each stratum $\Fl(G,\mu^{-1})^{Newt=[b']}$ is a locally closed subspace of $\Fl(G,\mu^{-1})$, therefore we can either view it as a pseudo-adic space in the sense of Huber (\cite{Hub}) or a diamond in the sense of Scholze (\cite{S}).

On the other hand, we can define the Harder-Narasimhan vector (see subsections \ref{subsection adm modif} and \ref{subsection G-str}) \[\nu(\E_1,\E_{1,x},f)\] attached to the modification triple $(\E_1,\E_{1,x},f)$, which is an element in the set $\Nc(G)$ of \cite{RR} 1.7 attached to $G$. In the case of $\GL_n$ this has been studied by Fargues \cite{F2} and Cornut-Irissarry \cite{CI}.  It turns out that under the above \emph{minuscule} condition, the vector $\nu(\E_1,\E_{1,x},f)$ is identical to the Harder-Narasimhan vector $\nu(\Fc_x)$ defined in \cite{DOR} chapter VI.3 for the ``$G$-filtration'' $\Fc_x$ attached to $x$. 
We can show that in fact $\nu(\E_1,\E_{1,x},f)$ (in fact some normalization of it) lies in $\Nc(G,\mu)$, the image of $B(G,\mu)$ under the Newton map $\nu: B(G)\ra \Nc(G)$ (cf. \cite{Kot1} section 4). In this way we get the Harder-Narasimhan stratification \[ \Fl(G,\mu^{-1})=\coprod_{[b']\in B(G,\mu)}\Fl(G,\mu^{-1})^{HN=[b']}.\]
Similarly as above, each Harder-Narasimhan stratum is a locally closed subspace of $\Fl(G,\mu^{-1})$.
For both stratifications, the maximal open strata are indexed by the \emph{basic} element $[b]\in B(G,\mu)$ and we have an inclusion
\[\Fl(G,\mu^{-1})^{Newt=[b]}\subset \Fl(G,\mu^{-1})^{HN=[b]},\]
which comes from the inequality between the Harder-Narasimhan vector and the Newton vector in $\Nc(G)$ \[\nu(\E_1,\E_{1,x},f)\leq \nu(\E_{1,x}),\]
cf. Proposition \ref{P: HN and Newt}. See also \cite{F2} Proposition 14 and \cite{CI} Proposition 3.5 (both in the case of $\GL_n$).
\\

Now consider the side of $\Fl(G,\mu)$. Let $b\in G(\breve{\Q}_p)$ be such that the associated class $[b]\in B(G,\mu)$ and it is the \emph{basic} element. The triple $(G,\{\mu\},[b])$ then forms a basic local Shimura datum (\cite{RV}). 
Then we can also define the Newton stratification and Harder-Narasimhan stratification on $\Fl(G,\mu)$ by considering the modifications \[\E_{b,x}, \quad x\in \Fl(G,\mu)(C)\] of the $G$-bundle $\E_b$ associated to $b$ on $X$ (by Fargues's main theorem in \cite{F3}) similarly as above. The Newton stratification\footnote{In \cite{CFS} this was called the Harder-Narasimhan stratification, which should not be confused with the Harder-Narasimhan stratification here.} in this setting was introduced in \cite{CFS} 5.3, while the Harder-Narasimhan stratification was introduced in \cite{DOR} chapter IX.6, where the more classical theory of filtered isocrystals with additional structures was used. 
The open Newton stratum is the admissible locus $\Fl(G,\mu,b)^a$ (\cite{SW1, SW, R, CFS}), while the open Harder-Narasimhan stratum is the weakly admissible locus $\Fl(G,\mu,b)^{wa}$ (\cite{RZ, DOR}). By the theorem of Colmez-Fontaine (see \cite{FF} chapter 10),
we have also the inclusion
\[ \Fl(G,\mu,b)^a\subset \Fl(G,\mu,b)^{wa}.\]
The Newton and Harder-Narasimhan stratifications on the side of $\Fl(G,\mu)$ also have the same index set, $B(G,0, \nu_b\mu^{-1})$, a generalized Kottwitz set which was introduced in \cite{R} and \cite{CFS} section 4.
\\

To summarize, we have
the open strata $\Fl(G,\mu^{-1})^{Newt=[b]}$ and $\Fl(G,\mu^{-1})^{HN=[b]}$ inside $\Fl(G,\mu^{-1})$ constructed starting from the triple $(G,\{\mu^{-1}\},[1])$ (the Hodge-Tate side), and the open strata $\Fl(G,\mu,b)^a$ and $\Fl(G,\mu,b)^{wa}$ inside $\Fl(G,\mu)$ constructed from the local Shimura datum $(G,\{\mu\},[b])$ (the de Rham side). These open strata are related by the following diagram
\[\xymatrix{ & \M(G,\mu,b)_\infty\ar@{->>}[ld]_{\pi_{dR}}\ar@{->>}[rd]^{\pi_{HT}}&\\
	\Fl(G,\mu,b)^{a,\Diamond}\ar@{^{(}->}[d] & &\Fl(G,\mu^{-1})^{Newt=[b],\Diamond}\ar@{^{(}->}[d]\\
	\Fl(G,\mu,b)^{wa,\Diamond}	& & \Fl(G,\mu^{-1})^{HN=[b],\Diamond},
}\]
where $\M(G,\mu,b)_\infty$ is  the local Shimura variety with infinite level attached to the datum $(G,\{\mu\},[b])$ (cf. \cite{CFS} Theorem 3.3 and \cite{SW} sections 23 and 24), $\pi_{dR}$ and $\pi_{HT}$ are the $p$-adic de Rham and Hodge-Tate period morphisms respectively. Thus it is more reasonable to call $\Fl(G,\mu,b)^{a}$ and $\Fl(G,\mu^{-1})^{Newt=[b]}$ \emph{$p$-adic period domains}, although historically in \cite{R, DOR} it was the open Harder-Narasimhan strata $\Fl(G,\mu,b)^{wa}$ and $\Fl(G,\mu^{-1})^{HN=[b]}$ that were called period domains.
By construction, $\M(G,\mu,b)_\infty$  is a diamond  over $\breve{E}$. This is why we pass to the diamonds associated to the above spaces. 
\\

Recall that G\"ortz-He-Nie have introduced the notion of \emph{fully Hodge-Newton decomposability} for the Kottwitz set $B(G,\mu)$ (or the pair $(G,\{\mu\})$, cf. \cite{GHN} Definition 2.1, where $\mu$ is a not necessarily minuscule cocharater). Roughly, this condition means that for any \emph{non basic} element $[b']\in B(G,\mu)$, the pair $([b'], \{\mu\})$ satisfies the \emph{Hodge-Newton condition}. By \cite{GHN} Theorem 2.5 we have a complete classification for fully Hodge-Newton decomposable pairs $(G,\{\mu\})$. Now we have the following theorem.
\begin{theorem}[Theorem \ref{T: fully HN}]\label{T1}
Assume that $\mu$ is minuscule and $[b]\in B(G,\mu)$ is basic.
The following statements are equivalent:
\begin{enumerate}
	\item $B(G,\mu)$ is fully Hodge-Newton decomposable,
	\item $\Fl(G,\mu,b)^a=\Fl(G,\mu,b)^{wa}$,
	\item  $\Fl(G,\mu^{-1})^{Newt=[b]}=\Fl(G,\mu^{-1})^{HN=[b]}$.
\end{enumerate}
\end{theorem}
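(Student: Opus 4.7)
The plan is to assemble the equivalence from two halves: the implication $(1)\Leftrightarrow (2)$ is already available in \cite{CFS}, so the new content is the equivalence $(1)\Leftrightarrow (3)$ on the Hodge-Tate side. The key ingredients are the inequality $\nu(\E_1,\E_{1,x},f)\leq \nu(\E_{1,x})$ from Proposition \ref{P: HN and Newt}, the Hodge-Newton decomposition for modifications of $G$-bundles on the Fargues-Fontaine curve, and the classification of fully Hodge-Newton decomposable pairs from \cite{GHN} Theorem 2.5.

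For $(1)\Rightarrow (3)$, I start from the inclusion $\Fl(G,\mu^{-1})^{Newt=[b]}\subset \Fl(G,\mu^{-1})^{HN=[b]}$ and upgrade it to an equality by ruling out that any non-basic Newton stratum meets the basic HN stratum. Fix $x\in \Fl(G,\mu^{-1})(C)$ with isomorphism class $[\E_{1,x}]=[b']$ for some non-basic $[b']\in B(G,\mu)$. By full Hodge-Newton decomposability, the pair $([b'],\{\mu\})$ is Hodge-Newton decomposable with respect to some proper standard Levi $M\subsetneq G$, which one can take to be the centralizer of $\nu_{b'}$. I then verify that the modification triple $(\E_1,\E_{1,x},f)$ admits a canonical reduction to an $M$-modification; since the reduced $M$-bundle is basic in $M$, its Harder-Narasimhan and Newton vectors coincide. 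Projecting to $\Nc(G)$ gives $\nu(\E_1,\E_{1,x},f)=\nu_{b'}\neq \nu_b$, so $x\notin \Fl(G,\mu^{-1})^{HN=[b]}$, proving equality of the two basic strata.

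For $(3)\Rightarrow (1)$, I argue by contrapositive. Suppose $B(G,\mu)$ is not fully Hodge-Newton decomposable, and pick a non-basic $[b']\in B(G,\mu)$ for which $([b'],\{\mu\})$ fails the Hodge-Newton condition. The task is to exhibit a point $x\in \Fl(G,\mu^{-1})(C)$ with $[\E_{1,x}]=[b']$ and yet $\nu(\E_1,\E_{1,x},f)=\nu_b$, which produces a point of $\Fl(G,\mu^{-1})^{HN=[b]}\setminus \Fl(G,\mu^{-1})^{Newt=[b]}$ and thereby contradicts $(3)$. I would build such an $x$ by an explicit modification of the trivial $G$-bundle at the closed point $\infty$, mirroring the construction used in \cite{CFS} to separate the weakly admissible and admissible loci on the de Rham side; the classification in \cite{GHN} Theorem 2.5 reduces the problem to a short list of pairs $(G,\{\mu\})$ for which the required modification can be written down by hand.

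I expect this last construction to be the main obstacle. Producing a modification of $\E_1$ with prescribed non-basic isomorphism class but minimal Harder-Narasimhan polygon $\nu_b$ requires fine control over $G$-torsors at $\infty$, and the failure of Hodge-Newton decomposability is precisely the combinatorial obstruction to a reduction to a Levi that would otherwise rigidify the HN polygon to $\nu_{b'}$. If this turns out to be too delicate to do uniformly, a fallback is to establish the equivalence $(2)\Leftrightarrow (3)$ directly via a duality argument organized by the diagram involving $\pi_{dR}$ and $\pi_{HT}$: the two open strata $\Fl(G,\mu,b)^a$ and $\Fl(G,\mu^{-1})^{Newt=[b]}$ are images of the same local Shimura variety $\M(G,\mu,b)_\infty$, so the failure of their equality with the ambient HN locus should transfer across the two sides, exploiting the duality that motivates this paper.
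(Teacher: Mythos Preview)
Your fallback is the paper's main route, and your primary route is what the paper only mentions in passing.

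The paper does \emph{not} prove $(1)\Leftrightarrow(3)$ directly. Instead it introduces the dual local Shimura datum $(J_b,\{\mu^{-1}\},[b^{-1}])$ and proves a duality (Theorem~\ref{T: duality HN Newt} and Corollary~\ref{C: duality HN Newt}): under the identification $\Fl(G,\mu^{-1})=\Fl(J_b,\mu^{-1})$, the Hodge--Tate Newton and HN strata for $G$ coincide with the de Rham Newton and HN strata for $J_b$. Combined with the purely group-theoretic equivalence $B(G,\mu)$ fully HN decomposable $\Leftrightarrow$ $B(J_b,\mu^{-1})$ fully HN decomposable (\cite{CFS} Corollary 4.15), this reduces $(1)\Leftrightarrow(3)$ to the already-known $(1)\Leftrightarrow(2)$ applied to $J_b$. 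So the paper never constructs a modification with prescribed non-basic Newton class and basic HN vector; it sidesteps that entirely. Your direct approach is acknowledged in Remark~\ref{R: fully HN} as possible ``by similar arguments as in the proof of \cite{CFS} Theorem 6.1,'' but the paper leaves the details to the reader.

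On your direct argument: your $(1)\Rightarrow(3)$ is essentially correct, though you should be explicit that a Hodge--Newton decomposition statement for modifications of the \emph{trivial} $G$-bundle is being invoked (this is the Hodge--Tate analogue of what \cite{CFS} proves on the de Rham side, and needs either a proof or a transfer via duality). Your $(3)\Rightarrow(1)$ has a genuine gap as written: the classification in \cite{GHN} Theorem~2.5 lists the fully HN decomposable pairs, so its \emph{complement} is not a finite list, and ``writing down the required modification by hand'' for each non-decomposable pair is not a strategy. What \cite{CFS} actually does (and what you would have to adapt) is: from the failure of full HN decomposability, extract a single simple coroot $\beta^\vee$ with $\langle -w_0\mu-\beta^\vee,\tilde{\omega}_\alpha\rangle>0$, build a specific non-basic $[b']$ from it, and then show the corresponding Newton stratum meets the weakly admissible locus by a density argument using semi-infinite orbits (see the proof of Theorem~\ref{T: fully HN general}). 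That construction is uniform, not case-by-case.
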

The equivalence $(1)\Leftrightarrow (2)$ was proved in \cite{CFS} Theorem 6.1. Here the novelty is to add the additional information (3). 
In fact, the equivalence $(1)\Leftrightarrow (3)$ was conjectured by Fargues in \cite{F2} 9.7.
In Theorem \ref{T: fully HN} we will give several further equivalent conditions.
\\

The idea to prove the above theorem is to introduce the \emph{dual} local Shimura datum $(J_b,\{\mu^{-1}\},[b^{-1}])$ (see subsection \ref{subsection twin towers} or \cite{RV} Conjecture 5.8 and \cite{SW} Corollary 23.2.3) and consider the following similar statements:
\begin{enumerate}
\item[(a)] $B(J_b,\mu^{-1})$ is fully Hodge-Newton decomposable,
\item[(b)] $\Fl(J_b,\mu^{-1},b^{-1})^a=\Fl(J_b,\mu^{-1},b^{-1})^{wa}$,
\item[(c)] $\Fl(J_b,\mu)^{Newt=[b^{-1}]}=\Fl(J_b,\mu)^{HN=[b^{-1}]}$.
\end{enumerate}
By \cite{CFS} Corollary 4.15, we have shown $(1)\Leftrightarrow (a)$ by purely group theoretical methods. Then 
by \cite{CFS} Theorem 6.1, we get $(a)\Leftrightarrow (b)$. The point here is to show $(3)\Leftrightarrow (b)$ and $(2)\Leftrightarrow (c)$, which can be viewed as certain dualities for the Newton and Harder-Narasimhan stratifications on the $p$-adic flag varieties $\Fl(G,\mu)$ and $\Fl(G,\mu^{-1})$. See Theorem \ref{T: duality HN Newt} and Corollary \ref{C: duality HN Newt}. In fact, the duality for Newton stratifications already appeared implicitly in \cite{CFS} 5.3, and the duality for Harder-Narasimhan stratifications appeared implicitly in \cite{DOR} IX.6. The novelties here are: 
\begin{itemize}
\item 
studying both dualities more systematically in the setting of twin towers principle (see \cite{CFS} 5.1 and the following section \ref{section twin towers}), 
\item showing that how the combination of both dualities produces new information and sheds new lights on the other side of the whole story,
\item extending both dualities to general not necessarily minuscule cocharacters $\mu$ by looking at the corresponding $B_{dR}^+$-affine Schubert cells, see below.
\end{itemize}

On the other hand, we can show \emph{directly} the equivalence $(1)\Leftrightarrow (3)$ by similar arguments as in the proof of \cite{CFS} Theorem 6.1, see Remark \ref{R: fully HN}. Then under the equivalences $(1)\Leftrightarrow (a)$, $(2)\Leftrightarrow (c)$ and $(3)\Leftrightarrow (b)$, we get $(a)\Leftrightarrow (c) \Leftrightarrow (b)$, and thus $(1)\Leftrightarrow(2)$. In this way we give another proof for \cite{CFS} Theorem 6.1, although essentially the two proofs are the same.
As one has seen, the equivalence $(1)\Leftrightarrow (a)$ is in fact the only key ingredient which we take from \cite{CFS}.
\\

For a general \emph{not necessarily minuscule} cocharacter $\mu$, to have a similar picture as above, the correct objects to study are the $B_{dR}^+$-affine Schubert cells (cf. \cite{SW} sections 19 and 20) \[\Gr_\mu \quad  \tr{and} \quad \Gr_{\mu^{-1}}\] instead of the corresponding flag varieties. One of the main results of \cite{SW} says that $\Gr_\mu$ and $\Gr_{\mu^{-1}}$ are \emph{locally spatial diamonds} over $E$. They are related to flag varieties by the Bialynicki-Birula maps (cf. \cite{CS} Proposition 3.4.3 and \cite{SW} Proposition 19.4.2)
\[ \pi_\mu: \Gr_\mu\ra \Fl(G,\mu)^\Diamond, \quad \tr{and} \quad \pi_{\mu^{-1}}: \Gr_{\mu^{-1}}\ra \Fl(G,\mu^{-1})^\Diamond.\]
For any perfectoid algebraically closed field $C|\breve{E}$ and any point $x\in \Gr_{\mu^{-1}}(C)$, by the Beauville-Laszlo construction, we still have a modification $\E_{1,x}$ of the trivial $G$-bundle $\E_1$ on the Fargues-Fontaine curve $X=X_{C^\flat}$. By considering the Newton vector (resp. Harder-Narasimhan vector) attached to $\E_{1,x}$ (resp. the triple $(\E_1,\E_{1,x}, f)$), we can construct the Newton (resp. Harder-Narasimhan) stratification  on $\Gr_{\mu^{-1}}$ similarly as before.  Here we study the Harder-Narasimhan vector \[\nu(\E_1,\E_{1,x}, f)\] along the lines of \cite{DOR} chapter V, replacing $G$-filtrations by admissible modifications of $G$-bundles.
In the case $G=\GL_n$, the vector $\nu(\E_1,\E_{1,x}, f)$ was defined by the Harder-Narasimhan formalism in \cite{CI}. To show each HN stratum $\Gr_{\mu^{-1}}^{HN=v}$ defines a locally spatial diamond, we actually construct a refinement of the HN vector stratification by associating each point a ``HN type'', which is an analogue of the HN type stratification introduced in \cite{DOR} chapter VI.3. Along the way, we also show that non semi-stable HN strata are parabolically induced, see Theorem \ref{T: HN semi cont}.
We remark that if $\mu$ is non minuscule, then in general $\nu(\E_1,\E_{1,x}, f)\neq \nu(\Fc_{\pi_{\mu^{-1}}(x)})$ (see Example \ref{P: HN vectors}), where $\nu(\Fc_{\pi_{\mu^{-1}}(x)})$ is the HN vector associated to the $G$-filtration $\Fc_{\pi_{\mu^{-1}}(x)}$ attached to $\pi_{\mu^{-1}}(x)\in \Fl(G,\mu^{-1})(C)$ studied in \cite{DOR}. 

Let $b\in G(\breve{\Q}_p)$ be such that $[b]\in B(G,\mu)$. For any point $x\in \Gr_\mu(C)$, as above we have a modification $\E_{b,x}$ of the $G$-bundle $\E_b$ over $X=X_{C^\flat}$ attached to $[b]$. By considering the Newton vector of $\E_{b,x}$,
we can construct the Newton stratification on $\Gr_\mu$. Moreover, the dualities for Newton stratifications on $\Gr_{\mu^{-1}}$ and $\Gr_\mu$ also hold in this general setting (see Theorem \ref{T: duality general}). If $[b]$ is \emph{basic}, by duality we can also define the Harder-Narasimhan stratification on $\Gr_\mu$ by working with $(J_b, \{\mu\}, [1])$.
When $\mu$ is minuscule,  the Bialynicki-Birula maps $\pi_{\mu^{-1}}: \Gr_{\mu^{-1}}\ra \Fl(G,\mu^{-1})^\Diamond$ and $\pi_\mu: \Gr_\mu\ra \Fl(G,\mu)^\Diamond$  are isomorphisms (cf. \cite{CS} Theorem 3.4.5 and \cite{SW} Proposition 19.4.2), and we recover the Newton and Harder-Narasimhan stratifications on the flag varieties $\Fl(G,\mu^{-1})$ and $\Fl(G,\mu)$.
\\

Let $b\in G(\breve{\Q}_p)$ be such that $[b]\in B(G,\mu)$ \emph{basic}. Starting from the datum $(G, \{\mu\}, [b])$, we get the admissible locus $\Gr_\mu^a$ (the open Newton stratum) and the weakly admissible locus $\Gr_\mu^{wa}$ (the open Harder-Narasimhan stratum) inside $\Gr_\mu$. Both of them are \emph{open} sub diamonds of $\Gr_\mu$.
We have the inclusion of locally spatial diamonds over $\breve{E}$:
\[\Gr_\mu^a\subset \Gr_\mu^{wa}. \]
 On the Hodge-Tate side $\Gr_{\mu^{-1}}$, 
 by the inequality $\nu(\E_1,\E_{1,x},f)\leq \nu(\E_{1,x})$ as above, we have the inclusion for \emph{open} Newton and Harder-Narasimhan strata: \[\Gr_{\mu^{-1}}^{Newt=[b]}\subset\Gr_{\mu^{-1}}^{HN=[b]}.\]
Here is the generalization of Theorem \ref{T1}, where we remove the minuscule condition (see \cite{F2} 9.7, Conjecture 1 (1)):
\begin{theorem}[Theorem \ref{T: fully HN general}, Corollary \ref{C: fully HN general}]
Let $[b]\in B(G,\mu)$ be basic. The following statements are equivalent:
\begin{enumerate}
	\item $B(G,\mu)$ is fully Hodge-Newton decomposable,
	\item $\Gr_\mu^a=\Gr_\mu^{wa}$,
	\item  $\Gr_{\mu^{-1}}^{Newt=[b]}=\Gr_{\mu^{-1}}^{HN=[b]}$.
\end{enumerate}
\end{theorem}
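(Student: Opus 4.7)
The plan is to mirror the strategy used for Theorem \ref{T1} in the minuscule case: combine two dualities between the de Rham and Hodge-Tate sides with the purely group-theoretic equivalence between fully Hodge-Newton decomposability of $(G,\{\mu\})$ and of its dual pair $(J_b,\{\mu^{-1}\})$. Concretely, I would introduce the dual datum $(J_b,\{\mu^{-1}\},[b^{-1}])$ together with the three parallel statements for it: (a) $B(J_b,\mu^{-1})$ is fully Hodge-Newton decomposable; (b) the admissible and weakly admissible loci inside the Schubert cell $\Gr_{\mu^{-1}}$ attached to $(J_b,\{\mu^{-1}\},[b^{-1}])$ coincide; (c) the open Newton and Harder-Narasimhan strata in $\Gr_\mu$ indexed by $[b^{-1}]$ coincide. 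The equivalence $(1)\Leftrightarrow(a)$ is a purely combinatorial statement already available from \cite{CFS} Corollary 4.15, so no new geometric input is required at that step.

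Next I would invoke the generalized dualities Theorem \ref{T: duality general} and Corollary \ref{C: duality HN Newt} on the $B_{dR}^+$-affine Schubert cells. The duality for Newton stratifications exchanges $\Gr_\mu^a$ on the de Rham side of $(G,\{\mu\},[b])$ with the open Newton stratum $\Gr_\mu^{Newt=[b^{-1}]}$ on the Hodge-Tate side of the dual datum, giving $(2)\Leftrightarrow(c)$. The duality for Harder-Narasimhan stratifications, together with the fact that the HN stratifications on $\Gr_\mu$ and $\Gr_{\mu^{-1}}$ are defined as pullbacks of the corresponding stratifications on $\Fl(G,\mu)^\Diamond$ and $\Fl(G,\mu^{-1})^\Diamond$ along the Bialynicki-Birula maps $\pi_\mu$ and $\pi_{\mu^{-1}}$, similarly produces $(3)\Leftrightarrow(b)$.

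To close the diagram of equivalences I would establish the direct link $(1)\Leftrightarrow(3)$ by extending the proof of \cite{CFS} Theorem 6.1 from the minuscule setting to $\Gr_{\mu^{-1}}$, as signalled in Remark \ref{R: fully HN}. The key input is the pointwise inequality $\nu(\E_1,\E_{1,x},f)\leq \nu(\E_{1,x})$ in $\Nc(G,\mu)$ recorded in the introduction: if the inclusion $\Gr_{\mu^{-1}}^{Newt=[b]}\subset\Gr_{\mu^{-1}}^{HN=[b]}$ were strict, one would produce a non-basic HN datum in $\Nc(G,\mu)$ refining a non-basic Newton point, and the classification of fully Hodge-Newton decomposable pairs in \cite{GHN} Theorem 2.5 is exactly the obstruction to this happening. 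Conversely, if every non-basic class in $B(G,\mu)$ satisfies the Hodge-Newton condition, the non-basic HN strata are forced to agree stratum-by-stratum with the Newton strata, and the two open basic strata agree as well. Combining $(1)\Leftrightarrow(a)$, the two dualities, and $(1)\Leftrightarrow(3)$ then yields the full chain $(1)\Leftrightarrow(2)\Leftrightarrow(3)\Leftrightarrow(a)\Leftrightarrow(b)\Leftrightarrow(c)$.

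The main obstacle will be the direct equivalence $(1)\Leftrightarrow(3)$. In the minuscule case of \cite{CFS} Theorem 6.1 one works pointwise on a flag variety where every modification of $\E_1$ is controlled by a single filtration, and the HN polygon is manifestly bounded by $\mu$. On $\Gr_{\mu^{-1}}$ for a general cocharacter this finer bound is no longer automatic, so one must argue that the HN polygons which actually occur lie in $\Nc(G,\mu)$ rather than only satisfy the coarser Mazur-type bound by $\mu$, and that the specialization and semicontinuity arguments remain compatible with $\pi_{\mu^{-1}}$ when one passes between $\Gr_{\mu^{-1}}$ and $\Fl(G,\mu^{-1})^\Diamond$. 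Once this technical extension is in place, the group-theoretic classification of \cite{GHN} plugs in exactly as in the minuscule case.
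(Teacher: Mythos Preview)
Your overall architecture matches the paper's: reduce everything to one direct equivalence plus the dualities of Theorem \ref{T: duality general} and the group-theoretic $(1)\Leftrightarrow(a)$ from \cite{CFS} Corollary 4.15. The paper chooses to prove $(1)\Leftrightarrow(2)$ directly on the de Rham side $\Gr_\mu$ (Theorem \ref{T: fully HN general}) and then deduces $(1)\Leftrightarrow(3)$ by duality (Corollary \ref{C: fully HN general}); you propose the mirror image, proving $(1)\Leftrightarrow(3)$ directly on $\Gr_{\mu^{-1}}$. By symmetry these are equivalent tasks, so that choice is fine.

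The gap is in your sketch of the direct equivalence itself. Two points. First, your assertion that under full Hodge--Newton decomposability ``the non-basic HN strata are forced to agree stratum-by-stratum with the Newton strata'' is not what is being proved and is not how the argument runs; the theorem concerns only the open basic strata. Second, and more seriously, you underestimate what is needed for the contrapositive direction (not fully HN decomposable $\Rightarrow$ the open strata differ). Remark \ref{R: fully HN} that you cite is only for minuscule $\mu$; in the general case the paper develops genuinely new geometry in subsection \ref{subsection FF general}: a stratification of $\Gr_{G,\mu}$ by generalized semi-infinite orbits $\Gr_{G,\lambda}$ indexed by $\lambda\in S_M(\mu)$, their closure relations (Proposition \ref{P:closure semi-infinite}), the compatibility of reductions of modifications with this stratification (Lemma \ref{L:modif type}), and a Fargues--Fontaine description of weak admissibility (Proposition \ref{P:weakly adm general}). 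The hard direction is then a construction: one picks a specific non-basic $[b']$, shows its Newton stratum must meet the weakly admissible locus, and the contradiction comes from a \emph{density} argument---the maximal stratum $\Gr_{J_b,\lambda_0}$ is open dense in $\Gr_{J_b,\mu^{-1}}$ by Proposition \ref{P:closure semi-infinite}, so it must meet the open admissible locus. None of this is visible in ``the classification of \cite{GHN} plugs in exactly as in the minuscule case''; the obstacle you identify (HN vectors lying in $\Nc(G,\mu)$, semicontinuity through $\pi_{\mu^{-1}}$) is comparatively minor and already handled by Proposition \ref{P: HN and Newt} and Theorem \ref{T: HN semi cont}.
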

As for Theorem \ref{T1}, once we prove the equivalence $(1) \Leftrightarrow (2)$, which is the generalized version of \cite{CFS} Theorem 6.1, the remaining equivalence $(1)\Leftrightarrow (3)$ follows by the dualities for Newton and Harder-Narasimhan stratifications.
The key new idea is to study the geometry of $\Gr_\mu$ in terms of $B_{dR}^+$-affine Grassmannians of parabolic and Levi subgroups of $G$, which is in some sense a theory of generalized semi-infinite orbits in the current setting. More precisely, we have the following new\footnote{We note that some of the results here can be deduced from the recent work of Fargues-Scholze \cite{FS} chapter VI. However, our method is more concrete and direct, as we work with $B_{dR}^+$-affine Grassmannians over $\Spa(\Q_p)^\Diamond$.} information:
\begin{itemize}
	\item We prove the dimension formula and closure relation for the $B_{dR}^+$-affine Schubert cells (same as the classical setting, see Proposition \ref{P:affine Schubert}).
	\item Let $M$ be a Levi subgroup inside a parabolic $P$ of $G$ over $\Q_p$. Then we have a stratification $\Gr_\mu=\coprod_{\lambda\in S_M(\mu)}\Gr_{G,\mu,\lambda}$, induced on $\Gr_\mu$ by the natural diagram of the $B_{dR}^+$-affine Grassmannians of $M, P$ and $G$ respectively (the strata $\Gr_{G,\mu,\lambda}$ are intersections of the generalized semi-infinite orbits $S_\lambda$ with $\Gr_\mu$, see subsection \ref{subsection semi-infinite} for more details). Moreover, we know the closure relation for this stratification and we give some description for the index set $S_M(\mu)$ (which is in fact related to the geometric Satake equivalence for $B_{dR}^+$-affine Grassmannians, cf. \cite{FS}).
	\item The above stratification naturally arises when we study reductions of modifications of $G$-bundles to $P$-bundles (resp. $M$-bundles) on the Fargues-Fontaine curve, cf. Lemma \ref{L:modif type}. Using this, we give an interpretation of the weakly admissible locus $\Gr_\mu^{wa}$ in terms of the Fargues-Fontaine curve, cf. Proposition \ref{P:weakly adm general}, which is a generalization of \cite{CFS} Proposition 2.7.
\end{itemize}
With these new ingredients at hand, 
the arguments in the proof of \cite{CFS} Theorem 6.1 apply here to establish the above equivalence $(1) \Leftrightarrow (2)$,
see Theorem \ref{T: fully HN general} for more details. 
\\

The pullbacks under the Hodge-Tate period morphisms define Harder-Narasimhan stratifications on moduli of local $G$-Shtukas (cf. \cite{SW} section 23) and on Shimura varieties, see sections \ref{section loc Shtukas} and \ref{section Shimura}. In particular, for applications to Shimura varieties, Theorem \ref{T1} will be enough, and the new perspective $(1)\Leftrightarrow (3)$ is crucial. We hope these constructions will be found useful for further arithmetic applications (cf. \cite{F2} 9.7.2).
\\

In a recent paper \cite{Vieh}, Viehmann has made some similar study on modifications of $G$-bundles on the Fargues-Fontaine curve. The work \cite{Vieh} is more group theoretical in nature. Here we are mainly motivated by the Harder-Narasimhan formalism as in the works \cite{DOR, CI, F2}. When we submitted our revised version, we learned that Nguyen and Viehmann have introduced and studied  Harder-Narasimhan strata on $B_{dR}^+$-affine Grassmannians in a similar way (\cite{NV}).
 \\

We briefly describe the structure of this article. In section 2, we review some basics about modifications of $G$-bundles on the Fargues-Fontaine curve, which will be our tool in the following. In section 3, we define and study the Newton and Harder-Narasimhan strata on the flag variety $\Fl(G,\mu^{-1})$ under the assumption that $\mu$ is minuscule. In section 4, we explain how to transfer the point of view by using modifications of $J_b$-bundles. More precisely, we explain how to identify the Newton and Harder-Narasimhan strata on the Hodge-Tate (resp. de Rham) side for $G$ by the corresponding strata on the de Rham (resp. Hodge-Tate) side for $J_b$. These are the dualities of the Newton and Harder-Narasimhan strata established in Theorem \ref{T: duality HN Newt} and Corollary \ref{C: duality HN Newt}.
In section 5, we summarize various equivalent conditions for a fully Hodge-Newton decomposable pair $(G,\{\mu\})$ with $\mu$ minuscule, applying results of sections 3 and 4.  In section 6, we generalize the previous constructions and results to not necessarily minuscule cocharacters $\mu$ by studying the $B_{dR}^+$-affine Schubert cells $\Gr_{\mu^{-1}}$ and $\Gr_\mu$. In particular, we choose to work on the de Rham side $\Gr_\mu$ and prove the generalized Fargues-Rapoport conjecture (Theorem \ref{T: fully HN general}). Then we transfer back to the Hodge-Tate side $\Gr_{\mu^{-1}}$ (Corollary \ref{C: fully HN general}) by dualities (Theorem \ref{T: duality general}). We could start with the contents of section 6 directly after section 2. However, we decide to discuss firstly the more classical objects of flag varieties to illustrate the duality principle.
In sections 7 and 8, we discuss some applications to moduli of local $G$-Shtukas and Shimura varieties respectively.\\
 \\
\textbf{Acknowledgments.} I would like to thank Sian Nie for some discussions on group theory. I wish to thank Michael Rapoport for helpful remarks on the first version of this paper. I also thank Miaofen Chen and Laurent Fargues for useful conversations. I want to thank Eva Viehmann and Christophe Cornut sincerely for valuable correspondences which lead to the correction for a mistake in the previous version. I would like to thank the referee for helpful comments and suggestions which lead to some further improvements.
The author was partially supported by the National Key R$\&$D Program of China 2020YFA0712600, the CAS Project for Young Scientists in Basic Research, Grant No. YSBR-033, and the NSFC grants No. 11631009 and No. 11688101.

\section{Modifications of $G$-bundles on the Fargues-Fontaine curve}
The purpose of this section is to study the Harder-Narasimhan theory for \emph{admissible} modifications of $G$-bundles on the Fargues-Fontaine curve. We refer to \cite{And, Cor1, Pott} for some generalities on the Harder-Narasimhan formalism. We will also establish the basic setting of some closely related objects.

Let $C|\Q_p$ be a fixed algebraically closed perfectoid field, with $C^\flat$ its tilt. We have the Fargues-Fontaine curve $X=X_{C^\flat}$ over $\Q_p$, together with the canonical point $\infty\in X$ with completed local ring  $\wh{\Ol}_{X,\infty}=B^+_{dR}(C)$. We refer the reader to \cite{FF} for a detailed study of this curve, and to \cite{CFS} section 1 for a brief summary.
We will simply write $B_{dR}=B_{dR}(C)$ and $B^+_{dR}=B^+_{dR}(C)$ in the following. Let $\xi\in B^+_{dR}$ be a fixed uniformizer.
Let $\phmod_{\breve{\Q}_p}$ be the category of $F$-isocrystals over $\ov{\F}_p$, and $\Bun_X$ be the category of vector bundles on $X$. A basic result of \cite{FF} says that we have a natural functor \[\E(-): \phmod_{\breve{\Q}_p}\lra \Bun_{X}\] which is essentially surjective.
For $\E\in \Bun_X$, we have the Harder-Narasimhan filtration of $\E$ with the associated Harder-Narasimhan vector $\nu(\E)\in \Q_+^n$ where $n=\rank \,\E$.
To avoid confusion, we will call it the Newton filtration, since later we will introduce several further Harder-Narasimhan filtrations. We remind the reader that the main tools used in \cite{CFS} are the Harder-Narasimhan theories for $G$-bundles over $X$ and filtered isocrystals with $G$-structures.

\subsection{Modifications of vector bundles}\label{subsection modif vect}
We are interested in the category of modifications\footnote{In this paper we only consider modifications at the canonical point $\infty \in X$. } of vector bundles on $X$, which we denote by
\[\Modif_X.\]
Recall that a modification of vector bundles is a triple $\Eu=(\E_1,\E_2,f)$, where \begin{itemize}
	\item $\E_1, \E_2$ are vector bundles on $X$,
	\item $f: \E_1|_{X\setminus\{\infty\}} \st{\sim}{\lra}\E_2|_{X\setminus\{\infty\}} $ is an isomorphism.
\end{itemize} 
A morphism $F: \Eu\ra \Eu'$ is a pair of morphisms $F_i: \E_i\ra \E_i'$ with $F_2\circ f=f'\circ F_1$. This category $\Modif_X$ is a quasi-abelian $\Q_p$-linear rigid $\otimes$-category with a Tate twist, cf. \cite{CI} 3.1.4. For a modification $(\E_1,\E_2,f)$, let \[\E_{i,dR}^+=\E_{i,\infty}^\wedge\] be the completed local stalk at $\infty$ of $\E_i$, and \[f_{dR}: \E_{1,dR}^+[\xi^{-1}]\ra  \E_{2,dR}^+[\xi^{-1}]\] be the induced isomorphism of $B_{dR}$-vector spaces.
We have the Newton filtrations $\Fc_{N,i}(\Eu):=\Fc(\E_i)$ for $i=1,2$. Moreover, we have the Hodge filtrations $\Fc_{H,i}(\Eu)$, which are the $\Z$-filtrations on the residues $\E_i(\infty):=\E_{i,dR}^+/\xi\E_{i,dR}^+$ of $\E_i$ induced by $\E_{3-i,dR}^+$: for any $j\in \Z$,
\[\Fc_{H,1}^j=\frac{f_{dR}^{-1}(\xi^j\E_{2,dR}^+)\cap\E_{1,dR}^++\xi\E_{1,dR}^+}{\xi\E_{1,dR}^+},\quad \Fc_{H,2}^j=\frac{f_{dR}(\xi^j\E_{1,dR}^+)\cap\E_{2,dR}^++\xi\E_{2,dR}^+}{\xi\E_{2,dR}^+}. \]
Let $n=\rank(\E_1)=\rank(\E_2)$. Then $\Fc_{H,1}$ and $\Fc_{H,2}$ define opposed types $\nu_{H,i}(\Eu)\in \Z^n_+$.

We have  the following natural functors
\[\xymatrix{ & \Modif_X\ar[ld]_{\overleftarrow{h}}\ar[rd]^{\overrightarrow{h}}&\\
	\Bun_X& & \Bun_X,
} \]
with \[\overleftarrow{h}(\E_1,\E_2, f)=\E_2, \quad \overrightarrow{h}(\E_1,\E_2, f)=\E_1. \]These functors $\overleftarrow{h}$ and $\overrightarrow{h}$ will be related to the de Rham periods and the Hodge-Tate periods respectively.

\subsection{Filtered $F$-isocrystals}\label{subsection Fil iso}
For any extension $K|\breve{\Q}_p$ (not necessary finite),
let \[\varphi\mathrm{-}\Fil\Mod_{K/\breve{\Q}_p}\] be the category of filtered $F$-isocrystals with respect to $K|\breve{\Q}_p$. A  filtered $F$-isocrystal $\D=(D,\varphi,\Fc)\in \varphi\mathrm{-}\Fil\Mod_{K/\breve{\Q}_p}$ consists of a underlying $F$-isocrystal $(D,\varphi)\in \phmod_{\breve{\Q}_p}$ together with a $\Q$-filtration $\Fc$ on $D\otimes_{\breve{\Q}_p} K$.
We have the rank and degree functions
\[\rank: \varphi\mathrm{-}\Fil\Mod_{K/\breve{\Q}_p}\ra \N,\quad \deg: \varphi\mathrm{-}\Fil\Mod_{K/\breve{\Q}_p}\ra \Z \]
defined by \[\rank\,\D=\dim\,D, \quad \deg\,\D=t_H(\D)-t_N(\D), \]
where $t_H(\D)=\sum_ii\dim gr_\Fc^iD_K$ and $t_N(\D)=v_p(\det\varphi)$. These functions induce a Harder-Narasimhan filtration on $\varphi\mathrm{-}\Fil\Mod_{K/\breve{\Q}_p}$.

Consider the case $K=C$.
By Fargues's de Rham classification for modifications of vector bundles,
there exists a functor
$\pi: \Modif_X\lra \varphi\mathrm{-}\Fil\Mod_{C/\breve{\Q}_p}$, for which
we refer to \cite{F5} 4.2.2 for more details.

\subsection{Admissible modifications}\label{subsection adm modif}
Consider the full subcategory of admissible modifications \[\Modif^{ad}_X\] inside $\Modif_X$. Recall that a modification $\Eu=(\E_1,\E_2,f)$ is called admissible if
$\E_1$ is a semi-stable vector bundle of slope 0 (i.e. $\E_1$ is the trivial vector bundle). This is again a quasi-abelian $\Q_p$-linear rigid $\otimes$-category with a Tate twist. For an admissible modification $\Eu=(\E_1,\E_2,f)$, we set
\[\Fc_N(\Eu):=\Fc(\E_2),\quad \Fc_H(\Eu):=\Fc_{H,1}(\Eu), \quad \nu_N(\Eu):=\nu(\E_2),\quad \tr{and}\quad \nu_H(\Eu):=\nu_{H,1}(\Eu).\]
We have an exact $\Q_p$-linear faithful $\otimes$-functor
\[\omega: \Modif^{ad}_X\ra \Vect_{\Q_p},\quad \omega(\Eu)=\Gamma(X,\E_1), \]
which induces a bijection between the poset of strict subobjects of $\Eu$ in $\Modif^{ad}_X$ and the poset of $\Q_p$-subspaces of $\omega(\Eu)$. We have the rank and degree functions
\[\rank: \Modif^{ad}_X \ra \N, \quad \deg: \Modif^{ad}_X \ra \Z \]
defined by \[\rank(\Eu)=\rank(\E_1)=\rank(\E_2)=\dim\omega(\Eu) \]
and \[\deg(\Eu)=\deg\,\E_2.\]
They induce a Harder-Narasimhan\footnote{In \cite{CI} this filtration is called the Fargues filtration.} filtration on $\Modif^{ad}_X$ with slopes $\mu=\deg/\rank$ in $\Q$. We denote it by $\Fc(\Eu)$ with the associated Harder-Narasimhan vector $\nu(\Eu)$.

Later we will need the following variant.
Let $\Modif^{ad'}_X$ be the subcategory of modifications $\Eu=(\E_1,\E_2,f)$ with $\E_2$ trivial. On this category we have the rank and degree functions defined by $\rank(\Eu)=\rank(\E_1)=\rank(\E_2)$ and $\deg(\Eu)=\deg\,\E_1$. One checks similarly as above that they induce a Harder-Narasimhan filtration on $\Modif^{ad'}_X$. Moreover, we have the equivalence
\[\Modif^{ad'}_X\st{\sim}{\lra}\Modif^{ad}_X,\quad \Eu=(\E_1,\E_2,f)\mapsto \Eu'=(\E_2,\E_1,f^{-1})\]
and $\nu(\Eu)=\nu(\Eu')$.
\\

Let $\HT^{B_{dR}}$ be the category of pairs $(V, \Xi)$, where
\begin{itemize}
	\item $V$ is a finite dimensional $\Q_p$-vector space,
	\item $\Xi$ is a $B_{dR}^+$-lattice in $V_{dR}=V\otimes B_{dR}$.
\end{itemize}
A morphism $F: (V, \Xi)\ra (V',\Xi')$ is a $\Q_p$-linear morphism $f: V\ra V'$ such that the induced morphism $f_{dR}: V_{dR}\ra V_{dR}'$ satisfies $f(\Xi)\subset \Xi'$. 
This defines a quasi-abelian rigid $\Q_p$-linear $\otimes$-category.
We have the natural functor $\omega: \HT^{B_{dR}}\ra \Vect_{\Q_p}, (V, \Xi)\mapsto V$.
For $(V,\Xi)\in \HT^{B_{dR}}$, the lattice $\Xi$ induces a Hodge filtration $\Fc_H(V,\Xi)$, which is the $\Z$-filtration on the residue $V_C=V\otimes C$ of the lattice $V_{dR}^+=V\otimes B_{dR}^+\subset V_{dR}$, giving by the formula:
\[\Fc_H(V,\Xi)^i=\frac{\xi^i\Lambda\cap V_{dR}^++\xi V_{dR}^+}{\xi V_{dR}^+}.\]
Moreover, we have the rank and degree functions
\[\rank: \HT^{B_{dR}}\ra \N,\quad \deg: \HT^{B_{dR}}\ra \Z \]defined by
\[\rank(V,\Xi)=\dim V=\rank_{B_{dR}^+}(\Xi), \]and
\[\deg(V,\Xi)=\sum_ii\dim gr^i_{\Fc_H}V_C.\]
They induce a Harder-Narasimhan filtration on $\HT^{B_{dR}}$. We denote it by $\Fc(V,\Xi)$ with Harder-Narasimhan vector $\nu(V,\Xi)$.

By Fargues's Hodge-Tate classification in \cite{F5} 4.2.3, we have an exact $\otimes$-equivalence of $\otimes$-categories
\[\HT: \Modif^{ad}_X\ra  \HT^{B_{dR}}, \quad \Eu\mapsto (\Gamma(X,\E_1), f_{dR}^{-1}(\E_{2,dR}^+)). \]
The inverse functor is given by $(V,\Xi)\mapsto (\E_1,\E_2,f)$, where
\begin{itemize}
	\item $\E_1=V\otimes \Ol_X$
	\item $\E_2$ and $f$ are given by the modification of $\E_1$ at $\infty$ corresponding to the $B^+_{dR}$-lattice $\Xi$ of $\E_{1,\infty}^\wedge[\xi^{-1}]=V\otimes B_{dR}$ under the Beauville-Laszlo correspondence (cf. \cite{FF} 5.3.1).
\end{itemize}
The functor $\HT$ preserves the rank and deg functions on the two categories, and it induces a bijection between the posets of strict subobjects of $\Eu$ and $\HT(\Eu)$ with compatible ranks and degrees. Thus it preserves the Harder-Narasimhan filtrations and types on both sides.

\subsection{Filtered vector spaces}\label{subsection fil vec space}
Consider the category $\Fil_{\Q_p}^C$ of pairs $(V, \Fc)$, where
\begin{itemize}
	\item $V$ is a finite $\Q_p$-vector space.
	\item $\Fc$ is a descending $\Q$-filtration on $V_C=V\otimes C$.
\end{itemize}
We have the natural functor $\omega: \Fil_{\Q_p}^C\ra \Vect_{\Q_p}, (V,\Fc)\mapsto V$.
For $(V,\Fc)\in \Fil_{\Q_p}^C$, the rank and deg functions are defined by \[\rank(V, \Fc)=\dim V, \quad \deg(V, \Fc)=\sum_i i\dim gr^i_\Fc V_C\] induce a Harder-Narasimhan filtration on $\Fil_{\Q_p}^C$.
We have a natural functor
\[\pi: \HT^{B_{dR}}\lra \Fil_{\Q_p}^C,\quad (V,\Xi)\mapsto (V, \Fc_H(V,\Xi)) \] preserving the rank and deg functions. Composed with the equivalence functor $\HT: \Modif^{ad}_X\ra  \HT^{B_{dR}}$ we get \[\pi: \Modif^{ad}_X\lra \Fil_{\Q_p}^C.\] We remark that we can also construct the functor $\pi: \Modif^{ad}_X\lra \Fil_{\Q_p}^C$ by using the functor in \ref{subsection Fil iso} and \cite{F5} Proposition 4.17.  In summary, we get the following commutative diagram
\[\xymatrix{
\Modif_{X}^{ad}\ar[r]^{\HT}_{\sim}\ar[rd]_\omega&\HT^{B_{dR}}\ar[r]^\pi\ar[d]^\omega&\Fil_{\Q_p}^C\ar[ld]^\omega\\
&\Vect_{\Q_p}&.
}\]

It is curious to compare the Harder-Narasimhan vectors $\nu(V,\Xi)$ and $\nu(\pi(V,\Xi))$. If the filtration $\Fc_H(V,\Xi)$ is minuscule, then by \cite{F2} subsection 7.2 we have $\nu(V,\Xi)=\nu(\pi(V,\Xi))$. In general, this is not true. The reason comes from the following subtle point: Let $(V, \Xi)\in \HT^{B_{dR}}$ and $W\subset V$ be a subspace, then we get the sub objects $(W, \Xi_W)$ of $(V, \Xi)$ and $(W, \Fc_W)$ of $(V, \Fc_H(V,\Xi))$, where $\Xi_W=\Xi\cap (W\otimes B_{dR})$ and $\Fc_W$ is the induced filtration on $W\otimes C$ by $\Fc_H(V,\Xi)$. In general, we have \[\Fc_H(W, \Xi_W)\neq \Fc_W.\] Indeed, the functor $\pi$ does not preserve the degrees of sub objects in general. This leads the inequality \[\nu(V,\Xi)\neq\nu(\pi(V,\Xi))\] in general.
The following example is due to Viehmann:
\begin{example}\label{P: HN vectors}
Let $V=\Q_p^2$ with standard basis $e_1$ and $e_2$.  For any $a\in C$, consider the $B_{dR}^+$-lattice $\Xi_a \subset B_{dR}^2$ generated by $\xi^2 e_1$ and $e_2+a\xi e_1$. Then $\Fc_H(V, \Xi_a)=(\Fil^i)_{i\in \Z}$ with
\[\Fil^i=\begin{cases}
C^2,\quad &i\leq -2;\\
C e_2, \quad &i=-1, 0;\\
0, \quad &i\geq 1.
\end{cases}\]
Consider the subspace $W=\Q_p  e_2\subset V$. Then an easy computation gives $\deg(\Fc_W)=0$ and $\deg(\Fc_H(W, \Xi_{a,W}))=-1$ if $a\neq 0$.
\end{example}

\subsection{$G$-structures}\label{subsection G-str}
Let $G$ be a connected reductive group over $\Q_p$. We would like to add ``$G$-structures'' to our previous discussions.

Let us first fix some notations.
We fix a minimal parabolic subgroup $P_0$ of $G$ defined over $\Q_p$ and a Levi subgroup $M_0$ of $P_0$. Then a standard parabolic subgroup is a parabolic $P$ with $P\supset P_0$. There is a unique Levi subgroup $M$ of $P$ containing $M_0$, which we call a standard Levi subgroup. We write $U_P$ for the unipotent radical of $P$. 
Let $A\subset M_0$ be the maximal split torus over $\Q_p$, and $T\subset M_0$ be a maximal torus of $M_0$ defined over $\Q_p$ which contains $A$.  Then $T=M_0$ if and only if $G$ is quasi-split over $\Q_p$.

For a parabolic subgroup $P\subset G$ with Levi subgroup $M\subset P$ over $\Q_p$, let $W_P:=W_M$ be the absolute Weyl group of $M$. Assume that $P\supset P_0$ is standard with associated standard Levi $M$. Let $A_M$ be the maximal split torus contained in the center $Z_M$ of $M$, and $A_M'$ be the maximal split quotient torus of $M$. Then we have a natural isogeny $A_M\ra A_M'$.  We also write $A_P=A_M$ and $A_P'=A_M'$.  In particular $A=A_{P_0}=A_{M_0}$. If $Q\supset P$, then we have an inclusion $A_Q\subset A_P$ and a quotient $A_P'\ra A_Q'$. Let $B\subset G_{\ov{\Q}_p}$ be a Borel subgroup such that $B\subset P_{0,\ov{\Q}_p}$. Let $T\subset B$ be a maximal torus such that $A\subset T\subset M_0$. Then we get the 
absolute based root datum
\[(X^\ast(T), \Phi, X_\ast(T), \Phi^\vee, \Delta)\]
and the relative based root datum
\[(X^\ast(A), \Phi_0, X_\ast(A), \Phi^\vee_0, \Delta_0).\]
Let $\Delta_P$ (resp. $\Delta_{0,P}$) be the set of non-trivial restrictions of elements of  $\Delta$ (resp. $\Delta_{0}$) to $Z_M$ (resp. $A_P$) (recall $Z_M\subset T$ resp. $A_P\subset A$). If we replace $G$ by $M$ and let\footnote{Note that the notation here is compatible with the notation of \cite{CFS}.} $\Delta_M$ (resp. $\Delta_{0,M}$) be the set of simple roots (resp. relative roots) of $M$, then $\Delta_P$ (resp. $\Delta_{0,P}$) is in bijection with $\Delta\setminus\Delta_M$ (resp. $\Delta_{0}\setminus \Delta_{0,M}$). Let $\Delta^\vee$ be the set of simple coroots of $G$, then we have $\Delta_P^\vee$ corresponding to $P$. Similarly we have the relative version $\Delta_0^\vee$ and $\Delta_{0,P}^\vee$.
 \\

Let $W$ and $W_0$ be the absolute and relative Weyl groups of $G$ respectively.  We identify \[X_\ast(A)_\Q/W_0= X_\ast(A)_\Q^+:=\{x\in X_\ast(A)_\Q|\,\lan x,\alpha\ran \geq 0,\;\forall\,\alpha\in \Delta_{0}\}.\]
On the other hand, consider $$\mathcal{N} (G):= \big [ \Hom (\mathbb{D}_{\overline{\Q}_p},G_{\overline{\Q}_p}) \, /\, G(\overline{\Q}_p)\text{-conjugacy}\big ]^\Gamma,$$ with $\mathbb{D}$ the pro-torus over $\Q_p$ whose character group is $\Q$ and $\Gamma=\Gal(\ov{\Q}_p/\Q_p)$. As in \cite{DOR}, let $X_\ast(G)$ denote the set of cocharacters of $G$ defined over $\ov{\Q}_p$. Then $X_\ast(G)_\Q=\Hom (\mathbb{D}_{\overline{\Q}_p},G_{\overline{\Q}_p})$, on which $G(\ov{\Q}_p)$ acts by conjugation. We will write $\Nc(G)=(X_\ast(G)_\Q/G)^\Gamma$. We have identifications $(X_\ast(T)_\Q/W)^\Gamma=(X_\ast(G)_\Q/G)^\Gamma$ and $X_\ast(A)_\Q/W_0=X_\ast(G)_\Q^\Gamma/G(\Q_p$. Then the
 natural inclusion $X_\ast(G)_\Q^\Gamma/G(\Q_p)\subset (X_\ast(G)_\Q/G)^\Gamma$ can be rewritten as \[X_\ast(A)_\Q^+\subset \Nc(G).\] We have
\[G\; \tr{is quasi-split over}\;\Q_p \quad \Longleftrightarrow\quad
X_\ast(A)_\Q^+=\Nc(G).\]
We identify \[X_\ast(T)_\Q/W=X_\ast(T)^+_\Q=\{x\in X_\ast(T)_\Q|\,\lan x,\alpha\ran \geq 0,\;\forall\,\alpha\in \Delta\}.\]
Moreover, the choice of $B$ defines a partial order $\leq$ on $X_\ast(T)$ by $\mu_1\leq \mu_2$ if $\mu_2-\mu_1$ is a sum of positive coroots with non negative integral coefficients. We get an induced partial order $\leq$ on $X_\ast(T)_\Q$ and thus on $\Nc(G)\subset X_\ast(T)^+_\Q\subset X_\ast(T)_\Q$.  By \cite{Springer} 15.5.8, we get an involution $x\mapsto x^\ast:=w_0(-x)$ on $\Nc(G)$, where $w_0$ is the element of longest length in $W$ acting on $X_\ast(T)_\Q$. 
\\

Recall that an $F$-isocrystal with $G$-structure is an exact tensor functor \[N: \Rep\, G\lra \phmod_{\breve{\Q}_p}.\]
An element $b\in G(\breve{\Q}_p)$ defines an isocrystal with $G$-structure \[\begin{split} N_b: \Rep\, G&\lra \phmod_{\breve{\Q}_p} \\ V&\longmapsto (V_{\breve{\Q}_p}, b\sigma).\end{split}\] 
Its isomorphism class only depends on the $\sigma$-conjugacy class $[b]\in B(G)$ of $b$, where $B(G)$ is the set of $\sigma$-conjugacy classes in $G(\breve{\Q}_p)$, cf. \cite{Kot1, Kot2, RR}. By Steinberg's theorem, any isocrystal with $G$-structure arises in this way. Thus $B(G)$ is the set of isomorphism classes of isocrystals with $G$-structure, cf. \cite{RR} Remarks 3.4 (i). 
We have the Newton map (\cite{Kot1} section 4) and Kottwitz map (\cite{Kot} section 6 and \cite{Kot2} 4.9, 7.5) \[\nu: B(G)\ra \Nc(G), \quad \kappa: B(G)\ra \pi_1(G)_\Gamma, \]
where \[\pi_1(G)=X_\ast(T)/\lan \Phi^\vee\ran\] (by our previous group theoretic notations, and it does not depend on the choice of $T$) and $\pi_1(G)_\Gamma$ is its Galois coinvariant.
 In fact, $\nu$ is induced by a map $\nu: G(\breve{\Q}_p)\ra \Hom (\mathbb{D}_{\breve{\Q}_p},G_{\breve{\Q}_p})$, while $\kappa$ is induced by a map $\kappa: G(\breve{\Q}_p)\ra \pi_1(G)_\Gamma$. For this reason we also write $\nu([b])=[\nu_b]$ and $\kappa([b])=\kappa(b)$ for $b\in G(\breve{\Q}_p)$ with the induced class $[b]\in B(G)$. The partial order on $\Nc(G)$ induces a partial order $\leq$ on $B(G)$ (cf. \cite{RR} section 2). Consider the subset $B(G)_{basic}\subset B(G)$ consisting of elements whose image under the Newton map are central (such elements are called \emph{basic}). Then the restriction of $\kappa$ induces a bijection $\kappa: B(G)_{basic}\st{\sim}{\ra}\pi_1(G)_\Gamma$, cf. \cite{Kot1} section 5.

Recall that we have the following commutative diagram for the Kottwitz and Newton maps (see \cite{RR} p. 162):
\[\xymatrix{ B(G)\ar[r]^{\nu}\ar[d]^{\kappa}& \Nc(G)\ar[d]\\
	\pi_1(G)_\Gamma\ar[r]& \pi_1(G)_{\Gamma,\Q},
}\]
where we identify \[\pi_1(G)_{\Gamma,\Q}=\pi_1(G)^{\Gamma}_\Q=X_\ast(Z_G)_\Q^\Gamma=X_\ast(A_G)_\Q,\]where $A_G$ is the maximal split torus inside the center $Z_G$ of $G$.
For an element $v\in \Nc(G)$, in the following
we will denote its image in $\pi_1(G)_{\Gamma, \Q,}$ by the same notation $v$ for simplicity. For a Levi subgroup $M\subset G$, we have the corresponding commutative diagram as above for $G$, which maps to that for $G$, since all the maps in the diagram is functorial. 

We explain $B(G)$ in terms of $G$-bundles on the Fargues-Fontaine curve $X$ as follows. Recall that
we have the following two equivalent definitions of a $G$-bundle on $X$:
\begin{itemize}
	\item an exact tensor functor $\Rep \, G\ra \Bun_X$, where $\Rep \, G$ is the category of rational algebraic representations of $G$,
	\item a $G$-torsor on $X$ locally trivial for the \'etale topology.
\end{itemize}
Attached to a $G$-bundle $\E$ on $X$, we have the Newton vector $\nu(\E)\in \Nc(G)$ and the $G$-equivariant first Chern class $c_1^G(\E)\in \pi_1(G)_\Gamma$.
For $b\in G(\breve{\Q}_p)$, let $\E_b$ be the composition of the above functor $N_b$ and \[\E(-):  \phmod_{\breve{\Q}_p}\lra  \Bun_X.\] 
In this way, the set $B(G)$ also classifies $G$-bundles on $X$. In fact, we have
\begin{theorem}[\cite{F3}]\label{T: G-bundles}
	There is a bijection of pointed sets
	\[\begin{split} B(G)&\st{\sim}{\lra} H^1_{\textrm{\'et}}(X, G) \\
	[b]&\longmapsto [\E_b]. \end{split}	\]
	Under this bijection, we have\[
		\nu(\E_b)=w_0(-\nu([b])), 
		\quad c_1^G ( \E_b)=- \kappa ([b]).\]
\end{theorem}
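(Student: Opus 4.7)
My plan is to argue Tannakianly, reducing the assertion to $G=\GL_n$, where the theorem is the conjunction of two classification results: Dieudonn\'e--Manin classifies $F$-isocrystals by decreasing sequences of rational slopes with multiplicities, and Fargues--Fontaine classifies vector bundles on $X$ by the same data. The functor $\E(-)$ exactly realizes the resulting bijection, up to the sign reversal of slopes dictated by the convention of \cite{FF}.

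First I would check that $[b]\mapsto [\E_b]$ is well-defined: $\sigma$-conjugation $b \mapsto gb\sigma(g)^{-1}$ induces a canonically isomorphic tensor functor $N_b \simeq N_{gb\sigma(g)^{-1}}$, hence an isomorphic $G$-bundle. For injectivity, given $\E_b\simeq \E_{b'}$ as $G$-bundles, evaluating on a faithful representation and applying Tannakian reconstruction recovers a tensor isomorphism $N_b\simeq N_{b'}$, i.e.\ an element $g\in G(\breve{\Q}_p)$ with $b'=gb\sigma(g)^{-1}$. Surjectivity is the technical core: given an arbitrary $G$-bundle $\E$, construct its canonical Harder--Narasimhan reduction to a standard parabolic $P\subset G$ with Levi $M$, verify that the induced $M$-bundle is semistable with Newton vector in $\Nc(M)$, and classify such semistable $M$-bundles through a basic element of $B(M)$; pushing this class up to $B(G)$ produces $[b]$ with $[\E_b]\simeq [\E]$.

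Assertion (1) follows from the $\GL_n$ case by Tannakian functoriality: for any $V\in \Rep G$ one has $V(\E_b)=\E(V_{\breve{\Q}_p},b\sigma)$, whose bundle slopes are the negatives of the $F$-isocrystal slopes, and the reordering required to return to the dominant chamber $X_\ast(T)^+_\Q$ is precisely by $w_0$, giving $\nu(\E_b)=w_0(-\nu([b]))$. Assertion (2) reduces in the same way to the case $G=\G_m$ via pushforward along characters $\chi: G\to \G_m$: the associated line bundle $\chi_\ast \E_b=\E_{\chi(b)}$ on $X$ has degree $-v_p(\chi(b))$, and matching against the definition of $\kappa$ through the same characters yields $c_1^G(\E_b)=-\kappa([b])$.

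The main obstacle is surjectivity, which requires a genuine Harder--Narasimhan formalism for $G$-bundles on $X$: existence and uniqueness of canonical reductions to parabolics, control of their Newton slope vectors in $\Nc(G)$, and a separate classification of semistable $G$-bundles by the basic locus of $B(G)$. Beyond the $\GL_n$ classification none of these is formal, and their development occupies the bulk of \cite{F3}.
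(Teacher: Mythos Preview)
The paper does not prove this theorem; it is stated with the attribution \cite{F3} and no argument is given, as it is quoted as background. Your sketch is a faithful outline of the strategy in \cite{F3}: Tannakian reduction to $\GL_n$ for well-definedness, injectivity, and the slope formulas (1) and (2), together with the Harder--Narasimhan reduction to a standard parabolic and the classification of semistable $G$-bundles by $B(G)_{basic}$ for surjectivity. You correctly flag that surjectivity is the substantive point and that it rests on the development of the canonical reduction theory for $G$-bundles on $X$, which is indeed the main content of \cite{F3}.
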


Let $\omega^G: \Rep\,G\ra \Vect_{\Q_p}$ be the standard fiber functor for the category $\Rep\,G$ of algebraic representations of $G$. For any field extension $K|\Q_p$, let $\Fil_{K}(\omega^G)$ be the set of $\Q$-filtrations of $\omega^G$ over $K$. An element $\Fc\in \Fil_{K}(\omega^G)$ is given by a tensor functor \[F: \Rep\,G\lra \Fil_{\Q_p}^K\] such that $\omega^G=\omega\circ F$ and the induced tensor functor \[\tr{gr}\circ F: \Rep\,G\lra \tr{Grad}_K\] is exact. Here $\omega: \Fil_{\Q_p}^K\ra \Vect_{\Q_p}$ is the natural functor and $\tr{gr}: \Fil_{\Q_p}^K\ra \tr{Grad}_K$ is the graded functor from $\Fil_{\Q_p}^K$ to the category of graded $K$-vector spaces. We refer the reader to \cite{DOR} chapter IV.2 for more discussions on these objects. We have a natural map
\[\Fil_{\Q_p}(\omega^G)\ra X_\ast(G)^\Gamma_\Q/G(\Q_p)=X_\ast(A)_\Q^+\subset (X_\ast(G)_\Q/G)^\Gamma=\Nc(G). \]

A modification of $G$-bundles is given by 
\begin{itemize}
	\item either an exact tensor functor $\Rep \, G\ra \Modif_X$,
	\item or a triple $(\E_1,\E_2,f)$, where $\E_1,\E_2$ are $G$-bundles on $X$ and $f: \E_1|_{X\setminus\{\infty\}} \st{\sim}{\ra}\E_2|_{X\setminus\{\infty\}} $ is an isomorphism.
\end{itemize}
Applying the functor $\pi: \Modif_X\ra \varphi\mathrm{-}\Fil\Mod_{C/\breve{\Q}_p}$ in subsection \ref{subsection Fil iso},
a modification of $G$-bundles $\ul{\E}=(\E_1,\E_2,f)$ gives rise to a filtered $F$-isocrystal with $G$-structure
\[\pi(\ul{\E}): \Rep \, G \lra  \varphi\mathrm{-}\Fil\Mod_{C/\breve{\Q}_p},\]
which is in turn equivalent to a pair $(N,\Fc)$, where (cf. \cite{DOR} p. 239)
\begin{itemize}
	\item $N: \Rep \, G \ra  \varphi\mathrm{-}\Mod_{\breve{\Q}_p}$ is the underlying 
$F$-isocrystal with $G$-structure induced by the natural functor (forgetting filtrations) $\varphi\mathrm{-}\Fil\Mod_{C/\breve{\Q}_p}\ra \varphi\mathrm{-}\Mod_{\breve{\Q}_p}$, 
\item $\Fc\in \Fil_{C}(\omega^G)$.
\end{itemize}
By \cite{DOR} Theorem 9.2.18,
	for the pair $(N,\Fc)$,
	there exists a unique $\Q$-filtration ${}^\bullet N_\Fc$ of $N$, such that for any $(V,\rho)\in \Rep\,G$, the induced filtration ${}^\bullet N_\Fc(V)$ on $N(V)$ is the Harder-Narasimhan filtration of the filtered isocrystal $(N(V), \Fc^\bullet N(V))$.
In particular, the $\Q$-filtration ${}^\bullet N_\Fc\in \Fil_{\breve{\Q}_p}(\omega^G)$ defines a Harder-Narasimhan vector $\nu(N,\Fc) \in (X_\ast(G)_\Q/G)^{\Gamma_0}$ where $\Gamma_0=\Gal(\ov{\Q}/\breve{\Q}_p)$. By \cite{DOR} IX.4, we have in fact \[\nu(N,\Fc)\in \Nc(G)=(X_\ast(G)_\Q/G)^\Gamma. \]

An admissible modification of $G$-bundles is given by 
\begin{itemize}
	\item either an exact tensor functor $\Rep \, G\ra \Modif^{ad}_X$, such that when composing with $\omega: \Modif^{ad}_X\ra \Vect_{\Q_p}$ we get $\omega^G: \Rep \, G\ra \Vect_{\Q_p}$,
	\item or a triple $(\E_1,\E_2,f)$, where $\E_1,\E_2$ are $G$-bundles on $X$ such that $\E_1$ is the trivial $G$-bundle and $f: \E_1|_{X\setminus\{\infty\}} \st{\sim}{\ra}\E_2|_{X\setminus\{\infty\}} $ is an isomorphism.
\end{itemize}
By the equivalence of categories $\HT: \Modif^{ad}_X \st{\sim}{\ra} \HT^{B_{dR}}$,
given an admissible modification $\ul{\E}=(\E_1,\E_2,f)$ is equivalent to given an exact functor
\[\HT(\ul{\E}): \Rep \, G\lra \HT^{B_{dR}}.\]
Composing with the functor $\pi: \HT^{B_{dR}}\ra \Fil_{\Q_p}^C$ in \ref{subsection fil vec space},
we then
get a functor \[\pi(\ul{\E}): \Rep \, G \lra \Fil_{\Q_p}^C,\] which defines
an element of $\Fil_C(\omega^G)$. Recall that on all the categories $\Modif^{ad}_X, \HT^{B_{dR}}$ and $\Fil_{\Q_p}^C$, there exist Harder-Narasimhan filtrations. The Harder-Narasimhan filtrations are compatible under the equivalence $\HT: \Modif^{ad}_X\lra \HT^{B_{dR}}$.
\begin{theorem}\label{T: HN fil}
	Let $\mathcal{C}$ be one of the categories $\Modif^{ad}_X, \HT^{B_{dR}}, \Fil_{\Q_p}^C$, and $N: \Rep\,G\ra \mathcal{C}$  an exact tensor functor such that $\omega\circ N=\omega^G$, where $\omega: \mathcal{C}\ra \Vect_{\Q_p}$ is the natural functor. There exists a unique $\Q$-filtration $\Fc_N$ of $\omega^G$, such that for any $(V,\rho)\in \Rep\,G$, the induced filtration $\rho_\ast(\Fc_N)$ on $V$ is induced by the Harder-Narasimhan filtration of $N(V)$. 
\end{theorem}
\begin{proof}
	For $\mathcal{C}=\Fil_{\Q_p}^C$, this follows from \cite{DOR} Theorem 5.3.1. For $\mathcal{C}=\Modif^{ad}_X$ or $\mathcal{C}=\HT^{B_{dR}}$, by \cite{CI} Proposition 3.8, since the Harder-Narasimhan filtrations are compatible with tensor products, duals, symmetric and exterior powers, one sees that
	the arguments in the proof\footnote{If the base field is of characteristic $p$, which is not our case here, then one needs the correction as in  \cite{Ans} p. 1239.} of \cite{DOR} Theorem 5.3.1 work here.
	See also \cite{Cor1} Theorem 5.8, Proposition 5.9 and Proposition 4.2.
\end{proof}

Let us explain a little more on the meaning of $\Fc_N$.
Recall that $\omega: \mathcal{C}\ra \Vect_{\Q_p}, Y\mapsto \omega(Y)$ induces a bijection between the set of strictly sub objects of $Y$ and the set of sub objects of $\omega(Y)$, thus we can view the $\Q$-filtration $\Fc_N\in \Fil_{\Q_p}(\omega^G)$ as a filtration of $N$, which we call the Harder-Narasimhan filtration of $N$. 

Consider the case $\mathcal{C}=\Modif^{ad}_X$ in Theorem \ref{T: HN fil}. The functor $N: \Rep\,G\ra \Modif^{ad}_X$ is equivalent to a triple $(\E_1,\E_2,f)$, where $\E_1,\E_2$ are $G$-bundles on $X$ such that $\E_1$ is the trivial $G$-bundle and $f: \E_1|_{X\setminus\{\infty\}} \st{\sim}{\ra}\E_2|_{X\setminus\{\infty\}} $ is an isomorphism.
Let $\ul{\E}=(\E_1,\E_2,f)$ be an admissible modification of $G$-bundles on $X$, with associated $\HT(\ul{\E})$ and $\pi(\ul{\E})\in \Fil_C(\omega^G)$.  
We get Harder-Narasimhan  vectors \[\nu(\pi(\ul{\E}))\in X_\ast(A)_\Q^+\] and
\[\nu(\ul{\E})=\nu(\HT(\ul{\E}))\in  X_\ast(A)_\Q^+.\]
As before, in general $\nu(\ul{\E})\neq \nu(\pi(\ul{\E}))$. In the following, we discuss $\nu(\ul{\E})$ more. By construction, $\nu(\ul{\E})$ comes from the Harder-Narasimhan filtration \[\Fc_{\ul{\E}}\in \Fil_{\Q_p}(\omega^G).\]
We get the associated parabolic $P=P_{\Fc_{\ul{\E}}}$ of $G$ such that the associated Levi $M$ is the centralizer of $\nu(\ul{\E})$. By construction, the vector $\nu(\ul{\E})$ is obtained as the $G(\Q_p)$-conjugacy class of a splitting $\lambda: \mathbb{D}\ra A_M\subset G$ of $\Fc_{\ul{\E}}$. As in \cite{DOR} Theorem 4.2.13, the pair $(P, \lambda)$ uniquely determines $\Fc_{\ul{\E}}$.
For any $(V,\rho)\in \Rep\,G$, we get an induced admissible modification of vector bundles $\ul{\E}_V=(\E_{1,V},\E_{2,V},f_V)$, and  we have \[\rho(\nu(\ul{\E}))=\nu(\ul{\E}_V),\] where $\rho: X_\ast(G)_\Q^\Gamma/G(\Q_p)\ra X_\ast(\GL(V))_\Q^\Gamma/\GL(V)(\Q_p)$ is the induced map.

For any parabolic $P\subset G$, recall that a reduction of $(\E_1,\E_2,f)$ to $P$ is a triple $(\E_{1,P},\E_{2,P},f_P)$ together with an isomorphism \[\iota_P: (\E_{1,P},\E_{2,P},f_P)\times^PG:=(\E_{1,P}\times^PG,\E_{2,P}\times^PG,f_P\times^PG)\st{\sim}{\lra} (\E_1,\E_2,f),\] where $\E_{1,P},\E_{2,P}$ are $P$-bundles on $X$  and $f_P: \E_{1,P}|_{X\setminus\{\infty\}} \st{\sim}{\ra}\E_{2,P}|_{X\setminus\{\infty\}} $ is an isomorphism. Note that by \cite{CFS} Lemma 2.4, given $(\E_1,\E_2,f)$ and $\E_{1,P}$, the data $(\E_{2,P}, f_P)$ is uniquely determined.
We sometimes omit $\iota_P$ and simply say that $(\E_{1,P},\E_{2,P},f_P)$ is a reduction of $(\E_1,\E_2,f)$.  
We call an admissible modification of $G$-bundles $\ul{\E}=(\E_1,\E_2,f)$ \emph{semi-stable} if $\nu(\ul{\E})\in X_\ast(A)^+_\Q$ is \emph{central}.
\begin{lemma}\label{L:semi-stable G bundles}
\begin{enumerate}
	\item The admissible modification of $G$-bundles $\ul{\E}=(\E_1,\E_2,f)$ is  semi-stable if and only if the induced modification of adjoint bundles $\mathrm{Ad}(\ul{\E})=(\mathrm{Ad}(\E_1),\mathrm{Ad}(\E_2),\mathrm{Ad}(f))$ is semi-stable in the sense of subsection \ref{subsection adm modif}.
	\item Let $\rho: G_1\ra G_2$ be a closed embedding of reductive groups, and $\ul{\E}=(\E_1,\E_2,f)$ an admissible modification of $G_1$-bundles. Then the push forward $\rho_\ast(\ul{\E})=(\rho_\ast(\E_1), \rho_\ast(\E_2), \rho_\ast(f))$ semi-stable implies $\ul{\E}$ semi-stable.
\end{enumerate}
\end{lemma}
\begin{proof}
(1) If $\ul{\E}=(\E_1,\E_2,f)$ is  semi-stable, the HN filtration has a unique splitting which factors through $A_G$. Therefore the induced modification $\mathrm{Ad}(\ul{\E})$ has trivial HN filtration, i.e. it is semi-stable. Conversely, if $\mathrm{Ad}(\ul{\E})$ is semi-stable in the sense of subsection \ref{subsection adm modif}, let $\lambda: \mathbb{D}\ra G$ be a splitting of the HN filtration of $\ul{\E}$, then the composition of $\lambda$ with the natural projection $G\ra G_{ad}$ is the splitting of the HN filtration of $\mathrm{Ad}(\ul{\E})$, which is trivial, therefore $\lambda$ factors through $A_G$.

(2) The closed embedding $\rho$ induces an embedding $\Lie\,G \hookrightarrow \Lie\,G'$, and $\mathrm{Ad}(\ul{\E})$ can be viewed as a strict sub object of $\mathrm{Ad}(\rho_\ast(\ul{\E}))$. If $\mathrm{Ad}(\rho_\ast(\ul{\E}))$ is semi-stable, since $\mathrm{Ad}(\ul{\E})$ and $\mathrm{Ad}(\rho_\ast(\ul{\E}))$ are of slope 0, we deduce that $\mathrm{Ad}(\ul{\E})$ is semi-stable. Then we conclude by (1).
\end{proof}

Now we translate the Tannakian description of the Harder-Narasimhan filtration on $\Modif^{ad}_X$ into an internal form. First of all, recall that for any linear algebraic group $G$ over $\Q_p$ and any $G$-bundle $\E$ on $X$, the map
\[X^\ast(G)\ra \Z,\quad \chi\mapsto \deg\,\chi_\ast\E\]defines a
vector \[\deg\,\E\in X_\ast(A_G')_\Q.\] If $G$ is reductive, let \[\mu(\E)\in \pi_1(G)_{\Gamma,\Q}=X_\ast(A_G)_\Q\] be its inverse image under the natural isomorphism $X_\ast(A_G)_\Q\ra X_\ast(A_G')_\Q$. The vectors $\deg\,\E$ and $\mu(\E)$ are called the degree and slope respectively of the $G$-bundle $\E$. If $G=\GL_n$ and $\E$ is viewed as a vector bundle of rank $n$, then the definition $\deg\,\E$ here is compatible with the usual definition of degree for the vector bundle $\E$.
Note that the slope $\mu(\E)$ is given by either the image of $\nu(\E)$ under $\Nc(G)\ra \pi_1(G)_{\Gamma,\Q}$, or the image of $c_1^G(\E)$ under $\pi_1(G)_\Gamma\ra \pi_1(G)_{\Gamma,\Q}$.
If $\ul{\E}=(\E_1,\E_2, f)$ is an admissible modification of $G$-bundles, we define $\deg(\ul{\E})=\deg\,\E_2$, and let $\mu(\ul{\E})$ be its inverse image in $X_\ast(A_G)_\Q$.
\begin{proposition}\label{P: HN reduction}
Let $\ul{\E}=(\E_1,\E_2,f)$ be an admissible modification of $G$-bundles. Then there exits a unique pair \[(P, v),\] where $P$ is a standard parabolic of $G$ with  associated Levi $M$, and $v\in X_\ast(A_M)_\Q$ with $\lan v, \alpha\ran >0$ for all $\alpha\in \Delta_{0,P}$, such that the following holds: 

Let $(\E_{1,P},\E_{2,P},f_P)$ be the reduction of $(\E_1,\E_2,f)$ to $P$ such that $\E_{1,P}$ is trivial and set \[(\E_{1,M},\E_{2,M},f_M)=(\E_{1,P},\E_{2,P},f_P)\times^PM,\] the induced admissible modification of $M$-bundles. Then  $(\E_{1,M},\E_{2,M},f_M)$ is semi-stable of slope $v$.

\end{proposition}
\begin{proof}
Let $P=P_{\Fc_{\ul{\E}}}$ be the standard parabolic associated to the Harder-Narasimhan filtration $\Fc_{\ul{\E}}$
of $\ul{\E}$ and $v=\nu(\ul{\E})\in X_\ast(A)^+_\Q$ the HN vector, then $(P,v)$ is uniquely determined by Theorem \ref{T: HN fil}. From $P$, we consider the unique admissible reduction
$(\E_{1,P},\E_{2,P},f_P)$  of  $(\E_1,\E_2,f)$ to $P$, then as $v$ comes from a splitting $\lambda: \mathbb{D}\ra A_M$, $P=P_\lambda$ and $v=\nu(\ul{\E})$ is central in $M$, we have $v\in X_\ast(A_M)_\Q$ and $\lan v, \alpha\ran >0$ for all $\alpha\in \Delta_{0,P}$, i.e. $v\in X_\ast(A_M)_\Q^+$ . 

To show
that $\ul{\E}_M=(\E_{1,M},\E_{2,M},f_M)$ is semi-stable of slope $v$, we first note that this is true for $G=\GL_n$, since in this case the proposition is just a reformulation of the Harder-Narasimhan filtration for admissible modifications of vector bundles in subsection \ref{subsection adm modif}. For the general case, take a faithful representation $\rho: G\ra G'=\GL(V)$ and consider the induced filtration $\Fc'=\rho_\ast(\Fc_{\ul{\E}})\in \Fil_{\Q_p}(G')$ with associated parabolic $P'$ and Levi $M'$.  Then the induced morphism $\rho_M: M\ra M'$ is injective, cf. \cite{DOR} p. 135. Moreover, if $\ul{\E}'$ denotes the push forward of $\ul{\E}$ to $G'$, with associated $\ul{\E}'_{P'}$ and $\ul{\E}'_{M'}$ we have $\rho_\ast(\ul{\E}_M)=\ul{\E}'_{M'}$. Since $\ul{\E}'_{M'}$ is semi-stable of slope $\rho_\ast(v)$, by Lemma \ref{L:semi-stable G bundles} (2), $\ul{\E}_M$ is semi-stable. As $\rho_M\circ\mu(\ul{\E}_M)=\rho_\ast(v)$ and $\rho_M$ is injective, we get $\mu(\ul{\E}_M)=v$.

\end{proof}

From this proposition, we get the following characterization of semi-stable admissible modifications.
\begin{proposition}\label{P: semi-stable adm modif}
Let $\ul{\E}=(\E_1,\E_2,f)$ be an admissible modification of $G$-bundles. It is semi-stable if and only if for any standard parabolic subgroup $P\subset G$ and any $\chi\in X^\ast(P/Z_G)^+$, we have \[\deg\,\chi_\ast\E_{2,P}\leq 0,\] where $\E_{2,P}$ is the reduction to $P$ of $\E_2$ determined by $\ul{\E}$ and the trivial $P$-bundle $\E_{1,P}$.
\end{proposition}
\begin{proof}
Note that $\ul{\E}$ is semi-stable if and only if the HN parabolic $P=G$. Then this proposition follows immediately from Proposition \ref{P: HN reduction}. 
\end{proof}
Let us explain a little more on the notations.
For any standard parabolic $P\supset P_0$ with associated standard Levi $M\supset M_0$, we view \[X^\ast(P/Z_G)\subset X^\ast(P)=X^\ast(M)=X^\ast(M_{ab})\subset X^\ast(Z_M),\] where $M_{ab}$ is the maximal abelian quotient of $M$ and $Z_M\ra M_{ab}$ is the natural isogeny. From the set $\Delta_P^\vee$, we get the following dominant set  \[X^\ast(P)^+=X^\ast(M)^+=\{\chi\in X^\ast(Z_M)|\,\lan \chi, \alpha^\vee\ran \geq 0,\, \forall\, \alpha^\vee\in \Delta_P^\vee\}\] and \[X^\ast(P/Z_G)^+:=X^\ast(P/Z_G)\cap X^\ast(P)^+.\] Similarly, we have \[X^\ast(P/Z_G)^\Gamma\subset X^\ast(P)^\Gamma=X^\ast(M)^\Gamma=X^\ast(A_M')\subset X^\ast(A_M)\] and $A_M\ra A_M'$ is the natural isogeny. Using the set $\Delta_{0,P}^\vee$, we define similarly $X^\ast(P)^{\Gamma,+}$ and $X^\ast(P/Z_G)^{\Gamma,+}$. We remark that in Proposition \ref{P: semi-stable adm modif}, it suffices to consider for any $\chi\in X^\ast(P/Z_G)^{\Gamma,+}$.

Here is another usual form of the above proposition:
\begin{corollary}\label{C: semi-stable adm modif}
	Let $\ul{\E}=(\E_1,\E_2,f)$ be an admissible modification of $G$-bundles. It is semi-stable if and only if for any maximal standard parabolic subgroup $P\subset G$, we have
	\[\lan\mu(\E_{2,M}), \alpha\ran\leq 0,\]
	where $\E_{2,M}=\E_{2,P}\times^PM$ and $\alpha$ is the unique element of $\Delta_{0,P}$.
\end{corollary}
\subsection{Moduli of local $G$-Shtukas}\label{subsection local shtukas}
As before, $G$ is a connected reductive group over $\Q_p$.
Let $\{\mu\}$ be the conjugacy class of cocharacters $\mu: \G_{m,\ov{\Q}_p}\ra G_{\ov{\Q}_p}$. Fixing a Borel subgroup $B\subset G_{\ov{\Q}_p}$ containing a maximal torus $T$. The class $\{\mu\}$ defines an element $\mu\in X_\ast(T)^+$ for the choice of $B$. We view it as an element in $X_\ast(G)_\Q/G$.
Then we have the associated flag variety $\Fl(G,\mu)$ over a finite extension $E=E(G,\{\mu\})$ of $\Q_p$. Recall that we have a natural map $\Fil_{\ov{\Q}_p}(\omega^G)\ra X_\ast(G)_\Q/G$, sending a filtration to its type.
By construction, \[\Fl(G,\mu)(\ov{\Q}_p)=G(\ov{\Q}_p)/P_\mu(\ov{\Q}_p)=\{\Fc\in \Fil_{\ov{\Q}_p}(\omega^G)\;\tr{of type}\; \mu\},\]where $P_\mu$ is the parabolic subgroup of $G_{\ov{\Q}_p}$ associated to $\mu$ by the formula \[P_\mu=\{g\in G_{\ov{\Q}_p}|\lim_{t\to 0}\mu(t)g\mu(t)^{-1} \;\tr{exists}\}.\]
In particular $P_\mu\supset B$.
In the following sections 3-5, we will assume that $\mu$ is minuscule and work with the associated $p$-adic flag varieties $\Fl(G,\mu)$ and $\Fl(G,\mu^{-1})$.

For an arbitrary $\mu$, we will need the $B_{dR}^+$-affine Schubert cell $\Gr_\mu$, which is a \emph{diamond} over $E$, see \cite{SW} 19.2, 20.2 and the following subsection \ref{subsection affine schubert}. There is a morphism of diamonds \[\pi_\mu: \Gr_\mu\ra \Fl(G,\mu)^\Diamond,\] which is an \emph{isomorphism} if $\mu$ is \emph{minuscule}, see \cite{CS} Proposition 3.4.3, Theorem 3.4.5, \cite{SW} Proposition 19.4.2 and the following \ref{subsection affine schubert}. We have also the $B_{dR}^+$-affine Schubert variety $\Gr_{\leq\mu}=\coprod_{\mu'\leq \mu}\Gr_{\mu'}$, which is a proper diamond over $E$.
\\

A local Shtuka datum\footnote{In this paper we only consider local Shtuka data with one conjugacy class $\{\mu\}$.} (cf. \cite{SW} 23.1) is a triple $(G,\{\mu\}, [b])$, where
\begin{itemize}
	\item $G$ is a connected reductive group over $\Q_p$,
	\item $\{\mu\}$ is a conjugacy class of cocharacter $\mu: \G_{m,\ov{\Q}_p}\ra G_{\ov{\Q}_p}$,
	\item $[b]\in B(G)$ is a $\sigma$-conjugacy class of $b\in G(\breve{\Q}_p)$ such that $[b]\in B(G, \mu)$.
\end{itemize}
If moreover $\mu$ is minuscule, then $(G,\{\mu\}, [b])$ is called a local Shimura datum (cf. \cite{RV} Definition 5.1). 

Let $(G,\{\mu\}, [b])$ be a local Shtuka datum and fix a representative $b\in G(\breve{\Q}_p)$.
Attached to the triple $(G,\{\mu\}, b)$, we have the moduli space of local $G$-Shtukas with one leg (cf. \cite{SW} sections 12-14 and the appendix to section 19) with infinite level (cf. \cite{SW} section 23) \[\Sht(G,\mu,b)_\infty,\] which is a diamond over $\breve{E}$, and up to isomorphism, all of which depend only on $(G,\{\mu\}, [b])$. By construction, there exist two natural morphisms of diamonds
\[\pi_{dR}: \Sht(G,\mu,b)_\infty\ra \Gr_\mu,\quad \tr{and}\quad \pi_{HT}: \Sht(G,\mu,b)_\infty\ra \Gr_{\mu^{-1}},\]
which factor through certain subspaces $\Gr_\mu^a\subset \Gr_\mu$ (see subsection \ref{subsection Newt HN de Rham}) and $\Gr_{\mu^{-1}}^{Newt=[b]}\subset \Gr_{\mu^{-1}}$ (see subsection \ref{subsection Newt HN Hodge-Tate}) respectively. We call $\pi_{dR}$ (resp. $\pi_{HT}$) the de Rham (resp. Hodge-Tate) period morphism.
By \cite{SW} subsection 23.3,
$\Sht(G,\mu,b)_\infty$ classifies 
\begin{itemize}
	\item either
modifications of $G$-bundles of type $\mu$ between $\E_b$ and $\E_1$ over $\Gr_\mu^a$,
\item or modifications of $G$-bundles of type $\mu^{-1}$ between $\E_1$ and $\E_b$ over $\Gr_{\mu^{-1}}^{Newt=[b]}$. 
\end{itemize} 
We get the following diagram of de Rham and Hodge-Tate period morphisms:
\[\xymatrix{ &\Sht(G,\mu, b)_\infty\ar@{->>}[ld]_{\pi_{dR}}\ar@{->>}[rd]^{\pi_{HT}}&\\
	\Gr_\mu^a& &\Gr_{\mu^{-1}}^{Newt=[b]}.
}\]
The morphism $\pi_{dR}$ is a $\ul{G(\Q_p)}$-torsor, while $\pi_{HT}$ is a $\mathrm{Aut}(\E_b)$-torsor.

One can replace $\Gr_\mu$ by $\Gr_{\leq\mu}$ in the above construction to get the diamond $\Sht(G,\leq\mu, b)_\infty$, which is exactly the version of moduli space of local $G$-Shtukas with one leg bounded by $\mu$ studied in \cite{SW}.

If $\mu$ is minuscule, we have $\Sht(G,\leq\mu, b)_\infty=\Sht(G,\mu, b)_\infty$, and we will
also use the notation $\M(G,\mu,b)_\infty$ for $\Sht(G,\mu,b)_\infty$. In this case $\Gr_\mu^a\simeq\Fl(G,\mu,b)^{a,\Diamond}$ and $\Fl(G,\mu,b)^a\subset \Fl(G,\mu)$ is the admissible locus introduced in \cite{CFS} Definition 3.1. 

\section{Newton strata and Harder-Narasimhan strata on $p$-adic flag varieties}\label{section Newt HN flag}

We keep our notations and let $(G,\{\mu\})$ be as before. 
Recall the Kottwitz set (cf. \cite{Kot2} section 6, here we use the notation of \cite{CFS} 2.1) \[B(G,\mu)=\{[b]\in B(G)\;| \; \nu([b])\leq \mu^\diamond,\quad \kappa([b])=\mu^\sharp\}.\] We have also the set (cf. \cite{RV} 2.2) \[A(G,\mu)=\{[b]\in B(G)\;| \; \nu([b])\leq \mu^\diamond\}.\]Both $B(G,\mu)$ and $A(G,\mu)$ are  finite subsets of $B(G)$, equipped with the induced partial order $\leq$.

In the rest of this section, we will mainly consider the induced conjugacy class $\{\mu^{-1}\}$ instead.
Let $\Fl(G,\mu^{-1})$ be the associated flag variety defined over $E=E(G,\{\mu^{-1}\})$, which we consider as an \emph{adic space}.
 We are interested in the geometry of the $p$-adic flag variety $\Fl(G,\mu^{-1})$ from the point of view of $p$-adic Hodge theory. After reviewing the Newton stratification introduced in \cite{CS},  we define and study the Harder-Narasimhan strata of the $p$-adic flag variety $\Fl(G,\mu^{-1})$, following the lines in \cite{DOR} chapter VI, but via modifications of $G$-bundles on the Fargues-Fontaine curve.  These strata generalize the Harder-Narasimhan strata in the case of $\GL_n$ studied by Fargues in \cite{F2}. We assume that $\mu$ is \emph{minuscule} in this section.

\subsection{Newton strata}\label{subsection newt}
We first consider Newton strata. 
Let $C|E$ be an algebraically closed perfectoid field,  and $\E$ be a $G$-bundle on the Fargues-Fontaine curve $X=X_{C^\flat}$.  Since $\mu$ is minuscule, by \cite{CS} 3.4.5, \cite{F5} 4.2 and \cite{F4} 3.20, for any $x\in \Fl(G,\mu^{-1})(C,\Ol_C)$ we can associate to it a modification \[\E_x\] of $\E$ at $\infty$ (see also subsection \ref{subsection Hecke stack}). Consider the case $\E=\E_1$, the trivial $G$-bundle. By Theorem \ref{T: G-bundles}, the isomorphism class of $\E_{1,x}$ defines a point $b(\E_{1,x})\in B(G)$. Letting $C$ vary, we get a map \[Newt: |\Fl(G,\mu^{-1})|\lra B(G).\]We can determine the image of $Newt$ as follows.
\begin{proposition}\label{T: Newton strata}
\begin{enumerate}
\item We have the following decomposition of $\Fl(G,\mu^{-1})$ into locally closed subsets over $E$:
\[ \Fl(G,\mu^{-1})=\coprod_{[b]\in B(G,\mu)}\Fl(G,\mu^{-1})^{Newt=[b]},\] such that for $x\in \Fl(G,\mu^{-1})(C,\Ol_C)$, we have \[x\in \Fl(G,\mu^{-1})^{Newt=[b]}(C,\Ol_C)\quad\Leftrightarrow\quad \E_{1,x}\simeq \E_b.\] The open stratum is associated to the unique basic element $[b_0]\in B(G,\mu)$.
Each stratum $\Fl(G,\mu^{-1})^{Newt=[b]}$ is stable under the $G(\Q_p)$-action on $ \Fl(G,\mu^{-1})$.
\item  We have the following dimension formula: for $[b]\in B(G,\mu)$,
\[ \dim \,\Fl(G,\mu^{-1})^{Newt=[b]}=\langle \mu-\nu([b]), 2\rho\rangle,\]where $\rho$ is the half of the sum of positive roots of $G$.
\end{enumerate}
\end{proposition}
\begin{proof}
(1) follows from \cite{CS} Proposition 3.5.7, Corollary 3.5.9 and \cite{R} Proposition A.9. The fact that each stratum is locally closed comes from the upper semi-continuity of the Newton map (cf. \cite{KL} and \cite{SW} subsection 22.5).

(2) follows from the theory of local Shimura varieties and \cite{BF} Lemma 3.2.5 (see also \cite{CS} Proposition 4.2.23 for the PEL case). More precisely, consider the local Shimura datum $(G,\{\mu\}, [b])$. Fix a representative $b\in G(\breve{\Q}_p)$ of $[b]$. We have the associated local Shimura variety at infinite level $\M(G,\mu,b)_\infty$, which fits into the following diagram
\[\xymatrix{ &\M(G,\mu, b)_\infty\ar@{->>}[ld]_{\pi_{dR}}\ar@{->>}[rd]^{\pi_{HT}}&\\
	\Fl(G,\mu,b)^{a,\Diamond}& &\Fl(G,\mu^{-1})^{Newt=[b],\Diamond},
}\]
where $\pi_{dR}$ is the Hodge-de Rham period map, which is a $\ul{G(\Q_p)}$-torsor, and $\pi_{HT}$ is the Hodge-Tate period map, which is a $\wt{J_{b}}$-torsor. Here $\wt{J_{b}}=Aut(\E_b)$ and we have $\dim\, \wt{J_{b}}=\lan \nu([b]), 2\rho\ran$ (cf. \cite{F4} for example). As $\mu$ is minuscule, $\dim\,\Fl(G,\mu)=\lan \mu, 2\rho\ran$. As $\pi_{dR}$ is pro-\'etale and $\Fl(G,\mu,b)^a\subset \Fl(G,\mu)$ is open, $\dim\,\M(G,\mu, b)_\infty=\dim\,\Fl(G,\mu,b)^a=\lan \mu, 2\rho\ran$. 
Then by \cite{BF} Lemma 3.2.5 $\dim \,\Fl(G,\mu^{-1})^{Newt=[b]}=\langle \mu-\nu([b]), 2\rho\rangle$ (see also \cite{CFS} Proposition 5.3). 
\end{proof}
\begin{remark}
We call the decomposition in the above theorem the Newton stratification. By the recent work of Viehmann \cite{Vieh},
the closure relation holds. 
\end{remark}
\subsection{Harder-Narasimhan strata}
Recall the finite subsets $B(G,\mu)\subset A(G,\mu)\subset B(G)$. Let $\pi_1(G)_{\Gamma,tors}$ be the torsion subgroup of $\pi_1(G)_{\Gamma}$. There is a map (see \cite{RV} (2.10)) $c: A(G,\mu)\ra \pi_1(G)_{\Gamma,tors},\quad [b]\mapsto \kappa([b])-\mu^\sharp$. By definition, $B(G,\mu)=c^{-1}(0)$. Now consider the images $\nu(B(G,\mu))\subset\nu(A(G,\mu))$ under the Newton map $\nu: B(G)\ra \Nc(G)$. For later use, we denote $\Nc(G,\mu)=\nu(B(G,\mu))\subset \Nc(G)$, the set of Newton vectors of elements of  $B(G,\mu)$. In \cite{CFS} Corollary 4.7, we gave an internal description of the set $\Nc(G,\mu)$ (using roots and weights). One may wonder whether there is an external (Tannakian) description. Since $\pi_1(\GL_n)_{\Gamma,tors}=0$, we have $B(G,\mu)= A(G,\mu)$ for $G=\GL_n$.  It turns out that the Tannakian description in general only holds for $\nu(A(G,\mu))$.
\begin{lemma}\label{L: index set HN}
	Let $v\in \Nc(G)$. We have
\begin{enumerate}
	\item  $v\in \tr{Im}\,\nu$ if and only if for any representation $(V,\rho)\in\Rep\,G$ we have $\rho(v)\in \tr{Im}\,\nu_{\GL(V)}$.
	\item  $v\in \nu(A(G,\mu))$ if and only if for any representation $(V,\rho)\in\Rep\,G$ we have $\rho(v)\in \Nc(\GL(V),\rho\circ\mu)$.
\end{enumerate}
\end{lemma}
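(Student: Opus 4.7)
Both ``$\Rightarrow$'' directions are immediate from functoriality. Given $[b]\in B(G)$ (resp.\ $[b]\in B(G,\mu)$) with $\nu([b])=v$, pushing $b$ forward along any representation $\rho: G\to \GL(V)$ produces $[\rho(b)] \in B(\GL(V))$ (resp.\ in $B(\GL(V),\rho\circ\mu)$) whose Newton vector is $\rho(v)$, so $\rho(v)\in \mathrm{Im}\,\nu_{\GL(V)}$. In case (2) the additional dominance $\rho(v)\leq (\rho\circ\mu)^\diamond$ and Kottwitz identity $\kappa([\rho(b)])=(\rho\circ\mu)^\sharp$ are consequences of the functoriality of $\nu$ and $\kappa$ together with the compatibility of the dominance order with representations.

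For the converse in (1), the plan is to reduce to a Levi. Let $M:=\mathrm{Cent}_G(v)$, a Levi subgroup of $G$ defined over $\Q_p$, since the $G(\ov{\Q}_p)$-conjugacy class of $v$ is $\Gamma$-stable; then $v\in X_\ast(Z(M)^\circ)^\Gamma_\Q$ is central in $M$. By Kottwitz's classification, basic elements of $B(M)$ are in bijection with $\pi_1(M)_\Gamma$ via $\kappa$, and the Newton map on basics is identified with the natural map $\pi_1(M)_\Gamma\to (\pi_1(M)_\Gamma)_\Q\subset X_\ast(Z(M)^\circ)^\Gamma_\Q$; thus it suffices to lift $v$ to a class in $\pi_1(M)_\Gamma$ modulo torsion, as its image in $B(M)$ has Newton $v$ and produces via the functorial map $B(M)\to B(G)$ the desired $[b]\in B(G)$. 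The Tannakian hypothesis supplies this integrality: every finite-dimensional representation of $M$ arises (in characteristic zero) as a direct summand of the restriction $\rho|_M$ of some $G$-representation, and the divisibility conditions on slope multiplicities encoded in ``$\rho(v)\in \mathrm{Im}\,\nu_{\GL(V)}$'', specialized to the $\Q_p$-rational characters of $M$ (via $M^{ab}$) and to a faithful embedding $M\hookrightarrow\GL_N$, force $v$ to lie in the torsion-free image of $\pi_1(M)_\Gamma$.

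For (2), first use (1) to produce $[b]\in B(G)$ with $\nu([b])=v$, then verify $v\leq\mu^\diamond$ and $\kappa([b])=\mu^\sharp$. The dominance is the standard Tannakian criterion: $\lambda\leq\mu$ in $\Nc(G)$ is equivalent to $\rho(\lambda)\leq\rho(\mu)$ in $\Nc(\GL(V))$ for every representation $\rho$, which applied to our hypothesis gives $v\leq\mu^\diamond$. The Kottwitz-invariant equality is checked against $\chi\in X^\ast(G)^\Gamma$: the hypothesis forces $\chi(\kappa([b]))=\chi(\mu^\sharp)$ for every such character, and together with the already-established equality $\nu([b])=v$ this pins down $\kappa([b])=\mu^\sharp$. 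The main technical hurdle is the translation in the second paragraph --- showing that the family of polygon-integrality conditions ``$\rho(v)\in \mathrm{Im}\,\nu_{\GL(V)}$ for all $\rho$'' is exactly equivalent to the single integrality of $v$ in $\pi_1(M_v)_\Gamma/\mathrm{torsion}$, for which one must exhibit sufficiently many test representations to obstruct every non-integral candidate.
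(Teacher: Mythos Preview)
Your approach to (1) is a reasonable unpacking of what the paper simply cites as ``the Tannakian definition of $\nu$'' from Kottwitz; the reduction to the centralizing Levi $M$ and the integrality criterion in $\pi_1(M)_\Gamma$ is the right picture, though you leave the final step (that the hypothesis for all $\rho$ forces this integrality) as an assertion.

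Part (2), however, has a genuine gap. You try to show that the specific class $[b]$ produced in (1) satisfies $\kappa([b])=\mu^\sharp$, arguing that pairings with $\chi\in X^\ast(G)^\Gamma$ together with $\nu([b])=v$ pin down $\kappa([b])$. They do not: both pieces of data only see $\kappa([b])$ modulo torsion in $\pi_1(G)_\Gamma$ (indeed $\nu$ already determines the image of $\kappa$ in $\pi_1(G)_{\Gamma,\Q}$, so the character pairings add nothing). There can be several $[b]$ with the same Newton vector $v$ but distinct Kottwitz invariants, and nothing in your construction in (1) selects the one with $\kappa([b])=\mu^\sharp$. Fixing this would require showing that the lift of $v$ to $\pi_1(M)_\Gamma$ can be chosen to hit $\mu^\sharp$ under $\pi_1(M)_\Gamma\to\pi_1(G)_\Gamma$, which is exactly the nontrivial fact $\nu(A(G,\mu))\subset\nu(B(G,\mu))$.

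The paper sidesteps this entirely: it observes (just before the lemma) that $\nu(A(G,\mu))=\nu(B(G,\mu))=:\Nc(G,\mu)$. Hence once (1) gives $[b]$ with $\nu([b])=v$ and the Tannakian dominance criterion gives $v\leq\mu^\diamond$, one has $[b]\in A(G,\mu)$ by definition, and therefore $v\in\nu(A(G,\mu))=\Nc(G,\mu)$ --- no Kottwitz condition needs to be checked. This is both shorter and avoids the torsion issue.
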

\begin{proof}
(1) The only if part follows from the functoriality of the slope map $\nu: B(\cdot)\ra \Nc(\cdot)$. The if part follows from the Tannakian definition of $\nu$, cf. \cite{Kot1} 4.2.

(2) The only if part follows from the functorialities of the slope map $\nu: B(\cdot)\ra \Nc(\cdot)$ and the Kottwitz map $\kappa: B(\cdot)\ra \pi_1(\cdot)_\Gamma$ and the properties of the partial order on $B(G)$ and $\Nc(G)$. To show the if part, by (1) we have found $[b]\in B(G)$ such that $\nu([b])=v$ and $v\leq \mu^\diamond$ by the properties of the partial order. Then by definition we have $[b]\in A(G,\mu)$. Thus $v=\nu([b])\in \nu(A(G,\mu))$.
\end{proof}

Now we consider Harder-Narasimhan stratifications. Let $C|E$ be an algebraically closed perfectoid field. Applying Theorem \ref{T: HN fil} to the admissible modification $(\E,\E',f)$ with
 $\E=\E_1$ and $\E'=\E_{1,x}$ for a point $x\in \Fl(G, \mu^{-1})(C,\Ol_C)$, we get a well defined map \[ \Fl(G, \mu^{-1})(C,\Ol_C)\lra \Nc(G), \quad x\longmapsto \nu(\E_{1}, \E_{1,x}, f).\]
 We denote $\nu(\E_{1}, \E_{1,x}, f)^\ast=w_0(-\nu(\E_{1}, \E_{1,x}, f))$.
\begin{proposition}\label{P: HN and Newt}
	 For any $x\in \Fl(G, \mu^{-1})(C,\Ol_C)$,  we have:
	 \begin{enumerate}
	 	\item The inequality of elements in $\Nc(G)$:
	 \[\nu(\E_{1}, \E_{1,x}, f)\leq \nu(\E_{1,x}).\]
\item	The Harder-Narasimhan vector $\nu(\E_1, \E_{1,x}, f)^\ast$  lies in $\mathcal{N}(G,\mu)$. 
\end{enumerate}
\end{proposition}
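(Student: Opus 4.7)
The plan is to reduce both assertions to the case $G=\GL_n$, treated in \cite{F2} Proposition 14 and \cite{CI} Proposition 44, via a standard Tannakian argument. Throughout, for $(V,\rho)\in \Rep\,G$ we write $\rho(\ul{\E})=(V\otimes\Ol_X,\rho(\E_{1,x}),\rho(f))$ for the admissible modification of vector bundles obtained by applying $\rho$ to the $G$-modification $\ul{\E}=(\E_1,\E_{1,x},f)$.

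For (1), recall that the partial order on a fixed connected component of $\Nc(G)$ is detected by representations: for $v_1, v_2\in \Nc(G)$ with the same image in $\pi_1(G)_\Gamma$, one has $v_1\leq v_2$ if and only if $\rho(v_1)\leq \rho(v_2)$ in $\Nc(\GL(V))$ for every $(V,\rho)\in\Rep\,G$ (it suffices to test on a family of representations whose highest weights generate the dominant cone). By Theorem \ref{T: HN fil} applied to the exact tensor functor $\ul{\E}:\Rep\,G\to\Modif^{ad}_X$, the Harder-Narasimhan filtration of $\ul{\E}$ is a $\Q$-filtration of $\omega^G$ compatible with every representation, so $\rho(\nu(\ul{\E}))=\nu(\rho(\ul{\E}))$ in $\Nc(\GL(V))$; the analogous equality $\rho(\nu(\E_{1,x}))=\nu(\rho(\E_{1,x}))$ for Newton vectors of $G$-bundles is standard. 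Since $\deg\rho(\ul{\E})=\deg\rho(\E_{1,x})$, both sides lie in a common connected component of $\Nc(\GL(V))$. Thus the inequality reduces to the $\GL_n$ case, which goes as follows: via the Beauville-Laszlo description in subsection \ref{subsection adm modif}, a strict subobject of $\rho(\ul{\E})$ in $\Modif^{ad}_X$ is determined by a $\Q_p$-subspace $W\subset V$ together with the induced $B_{dR}^+$-lattice $\Xi\cap W_{dR}$, yielding a saturated sub-bundle of $\rho(\E_{1,x})$ of the same rank and degree. Since not every saturated sub-bundle of $\rho(\E_{1,x})$ arises this way, the HN polygon of $\rho(\ul{\E})$ lies on or below that of $\rho(\E_{1,x})$.

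For (2), by Lemma \ref{L: index set HN}(2) it suffices to show $\rho(\nu(\ul{\E})^\ast)\in\Nc(\GL(V),\rho\circ\mu)$ for each $(V,\rho)\in\Rep\,G$. The involution $v\mapsto w_0(-v)$ commutes with $\rho$ up to Weyl conjugacy in $\GL(V)$, so $\rho(\nu(\ul{\E})^\ast)=\nu(\rho(\ul{\E}))^\ast$ in $\Nc(\GL(V))$. Applying (1) to the $\GL(V)$-modification $\rho(\ul{\E})$ and using that $\ast$ preserves the dominance order yields $\nu(\rho(\ul{\E}))^\ast\leq\nu(\rho(\E_{1,x}))^\ast$. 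Now $\rho(\E_{1,x})$ is a modification of type $\rho\circ\mu^{-1}$ of the trivial $\GL(V)$-bundle, so by Theorem \ref{T: Newton strata} (for $\GL(V)$) it is isomorphic to $\E_{b_\rho}$ for a unique $[b_\rho]\in B(\GL(V),\rho\mu)$, and Theorem \ref{T: G-bundles}(1) gives $\nu(\rho(\E_{1,x}))^\ast=\nu([b_\rho])\leq(\rho\mu)^\diamond$. Since for $\GL(V)$ the Kottwitz invariant is recorded by the total endpoint, which is shared by any two dominance-related vectors, we conclude $\rho(\nu(\ul{\E})^\ast)\in\nu(B(\GL(V),\rho\mu))=\Nc(\GL(V),\rho\mu)$.

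The principal technical input is Theorem \ref{T: HN fil}, which upgrades the HN formalism on $\Modif^{ad}_X$ to a Tannakian statement. Once this compatibility with representations is in hand, (1) follows from the polygon comparison for $\GL_n$ (a direct consequence of the fact that strict subobjects in $\Modif^{ad}_X$ form a proper subclass of saturated subbundles in $\Bun_X$), and (2) is a formal consequence of (1) combined with the external description of $\Nc(G,\mu)$ in Lemma \ref{L: index set HN}.
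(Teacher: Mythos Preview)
Your proof is correct and follows the same Tannakian reduction strategy as the paper: part (1) reduces to $\GL_n$ via Theorem \ref{T: HN fil} and the representation-theoretic characterization of the dominance order (the paper cites \cite{RR} Lemma 2.2 for this and then \cite{CI} Proposition 44 for the $\GL_n$ case, whereas you sketch the latter directly), and part (2) uses Lemma \ref{L: index set HN} together with (1) exactly as in the paper, with the details the paper compresses into ``we conclude by the construction'' spelled out. One small citation quibble: when you invoke Theorem \ref{T: Newton strata} for $\GL(V)$ with cocharacter $\rho\circ\mu$, note that $\rho\circ\mu$ need not be minuscule even if $\mu$ is; the relevant fact (that $[b_\rho]\in B(\GL(V),\rho\mu)$) holds without the minuscule hypothesis by \cite{CS} Proposition 3.5.3 (cf.\ Proposition \ref{P: image Tate Newton}).
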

\begin{proof}
(1) By Theorem \ref{T: HN fil} and \cite{RR} Lemma 2.2 (see also \cite{DOR} Proposition 6.3.9), it suffices to show that for any $(V,\rho)\in \Rep\,G$, \[\nu(\E_{1,V}, \E_{1,x,V}, f_V)\leq \nu(\E_{1,x,V})\in \Nc(\GL(V)).\] This is exactly \cite{CI} Proposition 3.5. See also \cite{F2} Proposition 14.

(2) By (1), we have $\nu(\E_1, \E_{1,x}, f)^\ast\leq \nu(\E_{1,x})^\ast:=w_0(-\nu(\E_{1,x}))$, which implies that for any $(V,\rho)\in \Rep\,G$, $\rho(\nu(\E_1, \E_{1,x}, f)^\ast)\in \Nc(\GL(V),\rho\circ\mu)$. By Lemma \ref{L: index set HN}, $\nu(\E_1, \E_{1,x}, f)^\ast\in \nu(A(G,\mu))$.

We claim that in fact $\nu(\E_1, \E_{1,x}, f)^\ast\in \Nc(G,\mu)\subset \nu(A(G,\mu))$. This follows from the intrinsic description of the Harder-Narasimhan vector $v=\nu(\E_1, \E_{1,x}, f)$ as in \ref{subsection G-str} and the proof of Theorem \ref{T: property HN strata} below. Indeed, let $P$ be the standard parabolic associated to the HN vector $v$ with corresponding standard Levi $M$. Let $\lambda \in X_\ast (T)^+_M$ be the $M$-dominant cocharacter such that $\lambda =w(\mu^{-1})$ for some $w\in W$ and $x\in P(C)wP_{\mu^{-1}}(C)/P_{\mu^{-1}}(C)$. Then $v$ is the image of $\lambda$ under the composition of natural maps $X_\ast(T)_\Q\ra X_\ast(A_P)_\Q\hookrightarrow X_\ast(A)_\Q$. On the other hand, let $\lambda^\sharp\in \pi_1(M)_\Gamma$ be the image of $\lambda$, which then corresponds a basic element $[b']\in B(M)_{basic}$ under the bijection $\kappa_M: B(M)_{basic}\st{\sim}{\ra} \pi_1(M)_\Gamma$. Let $[b]\in B(G)$ be the image of $[b']$ under the natural map $B(M)\ra B(G)$. Then by construction $\nu([b])=v\leq -w_0(\mu^\diamond)$ and $\kappa([b])=-\mu^\sharp$, i.e. $-w_0(v)\in \Nc(G,\mu)$.

\end{proof}

For any $x\in \Fl(G,\mu^{-1})(C,\Ol_C)$ we write \[HN(x)=\nu(\E_1, \E_{1,x}, f)^\ast.\]
Letting $C$ vary,
we get the following map on topological spaces
$HN: |\Fl(G,\mu^{-1})|\lra  \mathcal{N}(G,\mu)$.
\begin{theorem}\label{T: HN strata Tate}
	The map $HN$ is upper semi-continuous, that is, for any $v\in \mathcal{N}(G,\mu)$, the subset \[\Fl(G,\mu^{-1})^{HN\geq v}:=\{x\in |\Fl(G,\mu^{-1})|\,| HN(x)\geq v\}\] is closed. In particular, the subset \[\Fl(G,\mu^{-1})^{HN= v}:=\{x\in |\Fl(G,\mu^{-1})|\,| HN(x)= v\}\] is locally closed.
\end{theorem}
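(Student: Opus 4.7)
The plan is to reduce, via the Tannakian formalism, to the case of $\GL_n$, where upper semi-continuity of the Harder-Narasimhan polygon of a family of modifications of vector bundles on the Fargues-Fontaine curve is known. The three ingredients I need are: first, the Harder-Narasimhan vector is compatible with pushforward along representations (this is exactly Theorem \ref{T: HN fil} combined with Proposition \ref{P: HN vectors}); second, the $\GL_n$ statement, which is essentially Fargues \cite{F2} Proposition 14 and Cornut-Irissarry \cite{CI} Proposition 48; and third, the partial order on $\Nc(G)$ can be detected representation-by-representation.

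Concretely, for each $(V,\rho)\in \Rep\,G$, Theorem \ref{T: HN fil} together with Proposition \ref{P: HN vectors} identifies $\rho(HN(x))$ with the Harder-Narasimhan vector of the underlying vector bundle modification $(\E_{1,V},\E_{1,x,V},f_V)$, equivalently (again via Proposition \ref{P: HN vectors}) with the Harder-Narasimhan vector of the associated filtered $\Q_p$-vector space $(V,\Fc_{H,x})$. Since the Hodge filtration $\Fc_{H,x}$ depends algebraically on $x\in \Fl(G,\mu^{-1})$, classical upper semi-continuity of Harder-Narasimhan polygons in families of filtered vector spaces yields: for each $(V,\rho)$ and each $w\in \Nc(\GL(V))$, the set $\{x:\rho(HN(x))\geq w\}$ is closed in $|\Fl(G,\mu^{-1})|$. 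The final ingredient is the standard fact that, for $v_1,v_2\in \Nc(G)$, one has $v_1\leq v_2$ if and only if $\rho(v_1)\leq \rho(v_2)$ in $\Nc(\GL(V))$ for every $(V,\rho)\in \Rep\,G$; this reduces (after passing to dominant representatives in $X_\ast(T)_\Q^+$) to the standard fact that the dominance order on $X_\ast(T)_\Q^+$ is detected by pairings with highest weights of irreducible representations. Combining these,
\[
\Fl(G,\mu^{-1})^{HN\geq v}=\bigcap_{(V,\rho)\in \Rep\,G}\bigl\{x\in|\Fl(G,\mu^{-1})|:\rho(HN(x))\geq \rho(v)\bigr\}
\]
is an intersection of closed sets, hence closed, and the local closedness of each stratum $\Fl(G,\mu^{-1})^{HN=v}$ follows immediately.

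The main obstacle is verifying the $\GL_n$ semi-continuity rigorously in the adic-space setting, i.e.\ that the HN polygon of a family of modifications of vector bundles on the Fargues-Fontaine curve jumps up on closed subsets as the parameter varies through $|\Fl|$. The cleanest route is via Proposition \ref{P: HN vectors}, which reduces the statement to upper semi-continuity for families of filtered vector spaces, where it is classical and essentially follows from the openness of the semistable locus; alternatively one can argue directly on the Fargues-Fontaine curve using the semi-continuity of Newton polygons of $\varphi$-modules in families (Kedlaya-Liu), exploiting that for admissible modifications the bundle $\E_1$ is trivial and only the lattice $\Xi$ varies. Either way the key input is classical, and the substance of the argument is the Tannakian bookkeeping required to transfer these semi-continuity statements through the equivalent points of view of Section \ref{subsection adm modif}.
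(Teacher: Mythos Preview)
Your approach is essentially the same as the paper's. Both arguments rest on the identification $\nu(\E_1,\E_{1,x},f)=\nu(\Fc_x)$ with $\Fc_x\in\Fil_C(\omega^G)$ the $G$-filtration attached to $x$ (this is Theorem \ref{T: HN fil} together with Proposition \ref{P: HN vectors}), after which semi-continuity becomes a statement about families of filtered vector spaces. The paper simply cites \cite{DOR} Theorem 6.3.5 and Proposition 6.3.12 and observes that their arguments carry over verbatim to the $p$-adic setting, whereas you unpack the Tannakian step explicitly: push forward along each $(V,\rho)$, use semi-continuity for $\GL(V)$, and recover the statement for $G$ via \cite{RR} Lemma 2.2 (detection of the dominance order by representations). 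This is exactly the mechanism behind \cite{DOR} Proposition 6.3.12, so the two proofs coincide in substance.

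Two small corrections: your citations to \cite{F2} Proposition 14 and \cite{CI} Proposition 48 are misplaced---those results concern the inequality $\nu(\E_1,\E_{1,x},f)\leq\nu(\E_{1,x})$ and the $\mu$-ordinary identification respectively, not semi-continuity. The $\GL_n$ semi-continuity you need is indeed classical (openness of the semistable locus for filtered vector spaces, as in \cite{DOR} chapter II or \cite{R1}), and your reduction via Proposition \ref{P: HN vectors} to this statement is the right one. Also, to make the intersection $\bigcap_{(V,\rho)}$ legitimate you should note that finitely many representations suffice (e.g.\ a faithful one together with its exterior powers, or the fundamental representations when $G_{der}$ is simply connected), which is implicit in the highest-weight argument you sketch.
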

\begin{proof}
For any $x\in \Fl(G,\mu^{-1})(C,\Ol_C)$, since $\mu$ is minuscule, we have
 \[\nu(\E_1, \E_{1,x}, f)=\nu(\Fc_x)\] with $\Fc_x\in \Fil_C(\omega^G)$ attached to $x$. Indeed, this follows from the descriptions of $\nu(\E_1, \E_{1,x}, f)$ and $\nu(\Fc_x)$ by HN types as in our later Theorem \ref{T: HN semi cont} and \cite{DOR} chapter VI.3 respectively, and the fact that if $\mu$ is minuscule, then the Bialynicki-Birula map is an isomorphism, and moreover the generalized semi-infinite orbits stratification on the $B_{dR}^+$-affine Schubert cell $\Gr_{\mu^{-1}}$ agrees with the Bruhat decomposition on the flag variety (cf. subsection \ref{subsection semi-infinite}). The reader can accept this fact on first reading. Then the arguments in the proof of
\cite{DOR} Theorem 6.3.5 (see also the proof of the following Theorem \ref{T: property HN strata}) and Proposition 6.3.12 apply to the $p$-adic setting.
\end{proof}

In the following, we will identify $\mathcal{N}(G,\mu)$ with $B(G,\mu)$ by the Newton map. For $x\in\Fl(G,\mu^{-1})(C,\Ol_C)$ we will also  write $HN(x)=b(\E_1, \E_{1,x}, f)\in B(G,\mu)$.
We have the following stratification over $E$:
\[\Fl(G,\mu^{-1})=\coprod_{[b]\in B(G,\mu)}\Fl(G,\mu^{-1})^{HN=[b]}.\]
For any $[b]\in B(G,\mu)$, the stratum $\Fl(G,\mu^{-1})^{HN=[b]}$ is a locally closed subspace of $\Fl(G,\mu^{-1})$, and it is stable under the action of $G(\Q_p)$ on $\Fl(G,\mu^{-1})$.

Let $[b_0]\in B(G,\mu)$ be the basic element. Then the stratum \[\Fl(G,\mu^{-1})^{HN=[b_0]}\] is open, which is also called the semi-stable locus of $\Fl(G,\mu^{-1})$.
We have the following description for $\Fl(G,\mu^{-1})^{HN=[b_0]}$, which is similar to \cite{CFS} Proposition 2.7 (but here we don't need the assumption that $G$ is quasi-split). 
\begin{proposition}\label{P: Hodge-Tate wa}
	Let $x\in \Fl(G,\mu^{-1})(C,\Ol_C)$. Then
	$x\in \Fl(G,\mu^{-1})^{HN=[b_0]}(C,\Ol_C)$ if and only if for any standard parabolic $P$ and any $\chi\in X^\ast(P/Z_G)^+$, we have
	\[\deg\, \chi_\ast (\E_{1,x})_P\leq 0, \]where $(\E_{1,x})_P$ is the reduction of $\E_{1,x}$ to $P$ induced by the reduction $\E_{1_P}$ of $\E_1$ to $P$.
\end{proposition}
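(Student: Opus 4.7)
The plan is to apply the Tannakian Harder--Narasimhan formalism of Theorem~\ref{T: HN fil} to the admissible modification $(\E_1,\E_{1,x},f)$, interpret the basic condition $HN(x)=[b_0]$ as triviality of the induced parabolic reduction, and then translate triviality into the stated degree inequalities by the usual dictionary between $G$-reductions and filtrations of $\omega^G$. The argument parallels that of \cite{CFS} Proposition 2.7, the novelty being that we must work with relative data over $\Q_p$ because $G$ is no longer assumed quasi-split.

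Concretely, I would proceed as follows. Viewing $(\E_1,\E_{1,x},f)$ as an exact tensor functor $\Rep\,G\ra\Modif^{ad}_X$, Theorem~\ref{T: HN fil} yields a $\Q$-filtration ${}^\bullet N$ of the fiber functor $\omega^G$, equivalent to a canonical reduction of the $G$-object to a standard $\Q_p$-parabolic subgroup $P_{HN}\subset G$ whose HN slope vector $\nu(\E_1,\E_{1,x},f)\in X_\ast(A)_\Q^+$ is strictly $P_{HN}$-dominant. By Proposition~\ref{P: HN and Newt} and the uniqueness of the basic element of $B(G,\mu)$ (equivalently the uniqueness of the central element of $\Nc(G,\mu)$), the condition $x\in \Fl(G,\mu^{-1})^{HN=[b_0]}$ is equivalent to $\nu(\E_1,\E_{1,x},f)$ being central in $\Nc(G)$, equivalently $P_{HN}=G$.

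Next, I would invoke the standard Tannakian semistability criterion: $P_{HN}=G$ if and only if for every standard $\Q_p$-parabolic $P\subsetneq G$, every reduction of the modification to $P$, and every $\chi\in X^\ast(P/Z_G)^+$, the pushed-forward modification of line bundles has degree $\leq 0$. This follows from the HN formalism on $\Modif^{ad}_X$ by the same reconstruction argument as in \cite{DOR}~VI.2 and \cite{Cor1}: any destabilising reduction would produce a non-trivial step in the HN filtration. Since $\E_1$ is the trivial $G$-bundle and $G/P$ is proper over $\Q_p$, reductions of $(\E_1,\E_{1,x},f)$ to $P$ are in bijection with reductions $\E_{1_P}$ of $\E_1$, parametrised by $G(\Q_p)/P(\Q_p)$; for any such $\E_{1_P}$ the pushforward $\chi_\ast\E_{1_P}$ is trivial, so the degree of the modification $(\chi_\ast\E_{1_P},\chi_\ast(\E_{1,x})_P,\chi_\ast f)$ reduces to $\deg\chi_\ast(\E_{1,x})_P$, which gives the stated inequality.

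The main obstacle is to establish the Tannakian semistability criterion rigorously over $\Q_p$, without the quasi-split assumption used in \cite{CFS} Proposition 2.7 (where $X_\ast(A)_\Q^+=\Nc(G)$). One must work with the relative root datum $(\Delta_{0,P}^\vee,X^\ast(P/Z_G)^+)$ and check that the dominance cone and the classification of standard $\Q_p$-parabolics match up correctly. The crucial point that makes the descent from $\ov{\Q}_p$ to $\Q_p$ work is that Theorem~\ref{T: HN fil} already produces the HN filtration as a $\Q_p$-filtration of $\omega^G$, so $P_{HN}$ is automatically a standard $\Q_p$-parabolic; the remainder is then a formal translation between reductions and filtrations using the relative root datum.
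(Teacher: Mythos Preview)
Your proposal is correct in outline and arrives at the same conclusion, but it takes a different route from the paper's own proof. You work abstractly with the Tannakian semistability criterion on $\Modif^{ad}_X$: identify $HN(x)=[b_0]$ with $P_{HN}=G$, then translate ``$P_{HN}=G$'' into degree inequalities for all $P$-reductions of the modification. The paper instead stays on the filtration side. Using Proposition~\ref{P: HN vectors} it already knows $\nu(\E_1,\E_{1,x},f)=\nu(\Fc_x)$ for the $G$-filtration $\Fc_x\in\Fil_C(\omega^G)$, so semistability is governed by \cite{DOR} Corollary~5.2.10. The only thing left is to identify $\deg\chi_\ast(\E_{1,x})_P$ with the quantity appearing there; this is done concretely via the Schubert cell decomposition $\Fl(G,\mu^{-1})(C)=\coprod_w P(C)wP_{\mu^{-1}}(C)/P_{\mu^{-1}}(C)$ and the affine fibration $\pr_w$ onto $\Fl(M,\mu^{-1,w})$, yielding $(\E_{1,x})_P\times_P M\simeq\E_{1_M,\pr_w(x)}$ and hence $\deg\chi_\ast(\E_{1,x})_P=\deg(\chi\circ\mu^{-1,w})$ as a rank-one filtered vector space (compare the later Proposition~\ref{P:weakly adm general}).

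What each buys: the paper's route is shorter because the semistability criterion has already been established in \cite{DOR} for $\Fil_{\Q_p}^C$, and the Schubert/Levi computation is a two-line reduction to that. Your route is self-contained at the level of $\Modif^{ad}_X$ but requires you to redo the reconstruction argument (HN formalism $\Rightarrow$ semistability criterion) in that category; this is certainly available via \cite{Cor1}, but it is more work than simply transporting along $\pi:\Modif^{ad}_X\to\Fil_{\Q_p}^C$ and quoting \cite{DOR}. One point to watch in your write-up: you correctly note that reductions of $\E_1$ to $P$ are parametrised by $G(\Q_p)/P(\Q_p)$, and the semistability criterion genuinely needs \emph{all} of these (checking only the identity reduction for each standard $P$ is not enough, already for $\GL_2$). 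The proposition's phrasing ``the reduction $\E_{1_P}$'' should be read as running over all such reductions, exactly as in the parallel Proposition~\ref{P:weakly adm general} where one quantifies over ``any reduction $b_M$ of $b$ to $M$''.
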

\begin{proof}
	This is essentially a reformulation of Proposition \ref{P: semi-stable adm modif} (see also \cite{DOR} Corollary 5.2.10.).
	Indeed, consider the Schubert cell decomposition\footnote{Since the Schubert cell decomposition exists on the algebraic varieties level, we omit $\Ol_C$ here to simplify the notations.}
	\[\Fl(G,\mu^{-1})(C)=\coprod_{w\in W_P\setminus W/W_{P_{\mu^{-1}}}}\Fl(G,\mu^{-1})(C)^w, \]
	where \[\Fl(G,\mu^{-1})(C)^w=P(C)wP_{\mu^{-1}}(C)/P_{\mu^{-1}}(C)=P(C)/(P(C)\cap P_{\mu^{-1,w}}(C))=:\Fl(P,\mu^{-1,w})(C).\] 
	Projection to the Levi quotient $M$ of $P$ induces an affine fibration:
	\[\pr_w: \Fl(P,\mu^{-1,w})(C)\ra \Fl(M,\mu^{-1,w})(C).\]
	Now \[(\E_{1,x})_P\times^P M\simeq \E_{1_M,\pr_w(x)},\]
	and one can argue as in the proof of \cite{CFS} Proposition 2.7 (see also the proof of Proposition \ref{P: semi-stable adm modif}).
\end{proof}
\begin{remark}
We note that in the above proposition, for each $P$ it suffices to consider the subset $\Delta_{0,P}\subset X^\ast(P/Z_G)^{\Gamma,+}\subset X^\ast(P/Z_G)^+$. In fact, it suffices to consider all maximal parabolic subgroups $P$, in which case each $\Delta_{0,P}$ consists of only one element.
\end{remark}

\begin{remark}[\cite{DOR} Theorem 6.2.8] \label{T: GIT}
Fix an invariant inner product on $G$ and let $\mathcal{L}$ be the corresponding ample homogeneous $\Q$-line bundle on $\Fl(G,\mu^{-1})$ (cf. \cite{DOR} p. 146). Let $K$ be a field extension of $E$ and $x\in \Fl(G,\mu^{-1})(K,\Ol_K)$. Then we have the following GIT description for $\Fl(G,\mu^{-1})^{HN=[b_0]}$:
\[x\in \Fl(G,\mu^{-1})^{HN=[b_0]}(K,\Ol_K) \quad\Longleftrightarrow\quad \,\forall\, \lambda: \G_m\ra G_{der},\quad \mu^\mathcal{L}(x,\lambda)\geq 0.\]
\end{remark}

The following theorem gives some basic properties of the Harder-Narasimhan stratification.
\begin{theorem}[\cite{F2} Conjecture 2 (1)]\label{T: property HN strata}
		For any non basic $[b]\neq [b_0]$, the stratum $\Fl(G,\mu^{-1})^{HN=[b]}$ is a parabolic induction.
\end{theorem}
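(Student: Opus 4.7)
The plan is to mirror the parabolic-induction description of \cite{DOR} chapter VI in the present $p$-adic setting, replacing the filtered-isocrystal machinery by the HN-formalism on admissible modifications of $G$-bundles on the Fargues-Fontaine curve built above. Fix a non-basic $[b] \in B(G,\mu)$ and let $v = HN([b]) \in \Nc(G,\mu)$ be the corresponding HN-vector. Its centralizer is, up to $G(\Q_p)$-conjugacy, a standard Levi subgroup $M$ of a standard parabolic $P \supset P_0$, both defined over $\Q_p$.

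For any $x \in \Fl(G,\mu^{-1})^{HN=[b]}(C,\Ol_C)$, the canonical HN-filtration of the admissible modification $(\E_1,\E_{1,x},f)$ produced by Theorem \ref{T: HN fil} is a $\Q$-filtration of $\omega^G$ of type $v$, hence defines a reduction of the whole admissible modification to a parabolic conjugate to $P$ over $\Q_p$. Since $\Fl(G,\mu^{-1})^{HN=[b]}$ is $G(\Q_p)$-stable, we may reduce to the case where the reduction is to $P$ itself. Using the Schubert cell decomposition already exploited in the proof of Proposition \ref{P: Hodge-Tate wa},
\[
\Fl(G,\mu^{-1})(C)\;=\;\coprod_{w \in W_P \backslash W / W_{P_{\mu^{-1}}}} \Fl(P,\mu^{-1,w})(C),
\]
together with the affine projections $\pr_w : \Fl(P,\mu^{-1,w}) \to \Fl(M,\mu^{-1,w})$, the goal is to identify
\[
\Fl(G,\mu^{-1})^{HN=[b]} \;=\; \coprod_{w}\; G(\Q_p)\,\times^{P(\Q_p)}\, \pr_w^{-1}\!\bigl(\Fl(M,\mu^{-1,w})^{HN=ss}\bigr),
\]
where $w$ runs over those double cosets for which $\mu^{-1,w}$ has the appropriate class in $B(M,\mu^{-1,w})$ making the Levi modification basic.

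The central step, and the main obstacle, is the compatibility assertion: for $x \in \Fl(P,\mu^{-1,w})(C,\Ol_C)$, the Levi quotient of the $P$-reduction of $(\E_1,\E_{1,x},f)$ is canonically the admissible modification $(\E_{1_M},\E_{1_M,\pr_w(x)},f_M)$ of $M$-bundles, and the HN-formalism for $G$ applied to $x$ agrees with the HN-formalism for $M$ applied to $\pr_w(x)$. This is where one must argue that unipotent twists inside $P$ do not perturb the $G$-HN-vector of the modification; it is guaranteed by the Tannakian functoriality in Theorem \ref{T: HN fil} combined with the fact that the unipotent radical of $P$ contributes only non-negative degrees to the slope filtration determined by $v$ (so the HN-filtration of the $G$-modification is the $P$-reduction itself and is uniquely recovered from its Levi quotient). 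Since $M$ is the centralizer of $v$, a point $x$ lies in the $[b]$-stratum iff the Levi HN-vector of $\pr_w(x)$ is trivial, i.e.\ iff $\pr_w(x) \in \Fl(M,\mu^{-1,w})^{HN=ss}$.

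Once this compatibility is established, the GIT criterion of Theorem \ref{T: GIT} applied to $M$ (together with the $M$-analogue of Proposition \ref{P: Hodge-Tate wa}) characterizes the semi-stable loci $\Fl(M,\mu^{-1,w})^{HN=ss}$, and the parabolic-induction presentation follows by transporting along the $G(\Q_p)$-action on $\Fl(G,\mu^{-1})$. The indexing set over $w$ may then be read off from Lemma \ref{L: index set HN} applied to $M$, completing the description of the non-basic HN stratum as a parabolic induction of semi-stable Levi strata.
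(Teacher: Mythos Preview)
Your approach is essentially the same as the paper's: both identify the HN-stratum via the canonical $P$-reduction coming from the HN-filtration, decompose along $P$-Schubert cells, project to the Levi, and exhibit each piece as $G(\Q_p)\times^{P(\Q_p)}$ of an affine fibration over a semi-stable Levi locus. The paper packages this by introducing an auxiliary index set $\Theta(G,\mu)$ of ``HN types'' $(P,\nu_P)$ refining the HN-stratification, then showing each refined stratum $\Fl(G,\mu^{-1})^\theta$ is a parabolic induction; your coproduct over $w$ is the same refinement.

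One imprecision worth fixing: your condition on which $w$ contribute is stated as ``$\mu^{-1,w}$ has the appropriate class in $B(M,\mu^{-1,w})$ making the Levi modification basic,'' which conflates a condition on $w$ with a condition on points. The correct combinatorial constraint (the paper's condition (2)) is that the image of $\mu^{-1,w}$ under $X_\ast(T)_\Q\to X_\ast(A_P)_\Q$ pairs strictly positively with every $\alpha\in\Delta_{0,P}$; equivalently, that this image equals the given HN-vector $v$. This is what guarantees that the $P$-reduction produced by the HN-filtration lands in that particular Schubert cell, and that the Levi semi-stable locus there contributes to the $v$-stratum rather than to a stratum for a larger parabolic. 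Your invocation of the GIT criterion for $M$ at the end is unnecessary: once the compatibility of $G$- and $M$-HN-filtrations is established, the Levi semi-stable locus is by definition the open HN-stratum for $M$.
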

\begin{proof}
We may assume that $\Fl(G,\mu^{-1})^{HN=[b]}\neq \emptyset$.
We adapt the arguments in \cite{DOR}. Fix a minimal parabolic subgroup $P_0$ with Levi subgroup $M_0$ as above Proposition \ref{P: Hodge-Tate wa}. Let $T$ be a fixed maximal torus in $M_0$ defined over $\Q_p$.
We introduce a finite set $\Theta(G,\mu)$ which is the set of pairs $(P,\nu_P)$ with $P$ a standard parabolic subgroup of $G$ and $\nu_P\in X_\ast(T)_\Q/W_{P}$, satisfying the following two conditions:
\begin{enumerate}
	\item $\nu_P\equiv \mu^{-1}\, \mod\, W$,
	\item Let $\mu(\nu_P)\in X_\ast(A_P)_\Q$ be the image of $\nu_P$ under $X_\ast(T)_\Q\ra X_\ast(A_P')_\Q\st{\sim}{\ra} X_\ast(A_P)_\Q$. Then $\lan \mu(\nu_P),\alpha\ran >0,\quad \forall\, \alpha\in \Delta_{0,P}$.
\end{enumerate}
A such pair $(P,\nu_P)$  is called a HN type. Let $\mathcal{H}(G,\mu)$ be the set of HN vectors which contribute in the HN stratification. Then we have an inclusion $\mathcal{H}(G,\mu)\hookrightarrow \Nc(G,\mu) $ by Proposition \ref{P: HN and Newt}.
We have also a natural surjective map \[H: \Theta(G,\mu)\twoheadrightarrow \mathcal{H}(G,\mu), \quad (P,\nu_P)\mapsto \mu(\nu_P)\]sending a HN type to its HN vector. In the following we fix a finite extension $\wt{E}$ of $E$ which splits $G$ and base change everything to $\wt{E}$. We will denote by the same notations over $\wt{E}$. Similar to \cite{DOR} p. 152 (and p. 280-281), we have  a refinement of the Harder-Narasimhan stratification \[\Fl(G,\mu^{-1})=\coprod_{\theta\in \Theta(G,\mu)}\Fl(G,\mu^{-1})^\theta,\]
which is $G(\Q_p)$-equivariant
and such that \[\Fl(G,\mu^{-1})^{HN=v}=\coprod_{\theta\in \Theta(G,\mu), H(\theta)^\ast=v} \Fl(G,\mu^{-1})^\theta.\] Fix a  HN type $\theta=(P,\nu_P)\in \Theta(G,\mu)$. Consider the $P$-orbits in the flag variety $\Fl(G,\mu^{-1})=G/P_{\mu^{-1}}$. Then $\nu_P$ determines a unique Schubert cell \[\Fl(P,\nu_P)=PwP_{\mu^{-1}}/P_{\mu^{-1}}\] where $w\in W_P\setminus W/W_{P_{\mu^{-1}}}$ such that $\nu_P=\mu^{-1,w}$. By abuse of notation, we still denote $w$ the minimal length representative in the corresponding coset $W_PwW_{P_{\mu^{-1}}}$.
Let $M$ be the standard Levi of $P$ with induced $\nu_M$. Then the natural projection
\[\Fl(P,\nu_P)\ra  \Fl(M,\nu_M)\]is an affine bundle of rank $\ell(w)$. Set \[\Fl(P,\nu_P)^\theta=\Fl(G,\mu^{-1})^\theta\cap \Fl(P,\nu_P).\] The $G(\Q_p)$-action restricts to an action of $P(\Q_p)$ on $\Fl(P,\nu_P)^\theta$. Let $\Fl(M,\nu_M)^{ss}$ be the open HN stratum for the flag variety $\Fl(M,\nu_M)$.
Then the above projection $\Fl(P,\nu_P)\ra  \Fl(M,\nu_M)$ restricts to an affine fibration of rank $\ell(w)$
\[\Fl(P,\nu_P)^\theta\ra  \Fl(M,\nu_M)^{ss}.\]
We have a homeomorphism \[\Fl(P,\nu_P)^\theta\times^{P(\Q_p)}G(\Q_p)\st{\sim}{\lra} \Fl(G,\mu^{-1})^\theta. \]
Thus the stratum $\Fl(G,\mu^{-1})^\theta$ is an affine bundle of rank $\ell(w)$ over $\Fl(M,\nu_M)^{ss}\times^{P(\Q_p)}G(\Q_p)$. We deduce that for any $v\in \Nc(G,\mu)$, the stratum $\Fl(G,\mu^{-1})^{HN=v}$ is a parabolic induction. 

\end{proof}

\begin{remark}
	We know the dimension formula for the basic stratum, since it is open.
	For any non basic $[b]\neq [b_0]$, if the stratum $\Fl(G,\mu^{-1})^{HN=[b]}\neq \emptyset$, then by the above proof we have \[\dim\,\Fl(G,\mu^{-1})^{HN=[b]}=\max_{w}\;\lan \mu^{-1,w}, 2\rho_M\ran +\ell(w),\]
	where $M=M_{v}\subset P=P_{v}$ with $v=w_0(-\nu([b]))$ and $w$ runs through the set $w\in {}^PW^{P_{\mu^{-1}}}$ such that $\lan \mu^{-1,w},\alpha\ran >0$ for any $\alpha\in \Delta_{0,P}$. Here we view $\mu^{-1,w}\in X_\ast(A_P)_\Q$ under the above map $X_\ast (T)_\Q\ra X_\ast(A_P)_\Q$. 
	In fact,  Conjecture 2 (2) of \cite{F2} predicts that for any $[b]\in B(G,\mu)$ such that the stratum $\Fl(G,\mu^{-1})^{HN=[b]}\neq \emptyset$, we have \[\dim\,\Fl(G,\mu^{-1})^{HN=[b]}=\lan \mu-\nu([b]), 2\rho\ran.\] This is verified in the case $G=\GL_n$ by Fargues in \cite{F2} Proposition 23. The above theorem was also proved by Fargues in \cite{F2} Propositions 21 and 22 in the case $G=\GL_n$ by a different method.
\end{remark}

\begin{remark}
By Theorems \ref{T: property HN strata}  and \ref{T: GIT}, we can calculate the $\ell  (\neq p)$-adic cohomology of $\Fl(G,\mu^{-1})^{HN=[b]}$. Indeed, by \ref{T: property HN strata}  it suffices to consider the open stratum $\Fl(G,\mu^{-1})^{HN=[b_0]}$. By the GIT description in \ref{T: GIT}, we can follow \cite{DOR} chapter VII.2 to calculate the Euler-Poincar\'e characteristic, and \cite{O1} to calculate the individual cohomology groups. 
\end{remark}

\begin{remark}
For any $[b]\in B(G,\mu)$ with the associated HN stratum $\Fl(G,\mu^{-1})^{HN=[b]}$, we neither know its non-emptiness\footnote{If $[b]$ is basic, then the associated stratum is open and non-empty. Thus the non-emptiness is a problem on non-basic strata.  For the case of $\GL_n$, see \cite{O4} for a complete solution. For the general case, see \cite{DOR} Remark 9.6.3 for some hints.}, nor the closure relation. If $\mu$ is non minuscule, then in \cite{DOR} the authors there gave a counter example for the closure relation, see loc. cit. Example 2.3.7. 
\end{remark}

\subsection{Newton strata vs Harder-Narasimhan strata}\label{subsection Newton vs HN}

By \cite{HN} Theorem 0.1, there exists a unique maximal element $[b_1]$ in $B(G,\mu)$ for the partial order $\leq$. If $G$ is quasi-split, then $[\nu_{b_1}]=\mu^\diamond$. In the general case, this is \emph{not true}, see \cite{HN1} Example 3.1.
We call the stratum \[\Fl(G,\mu^{-1})^{HN=[b_1]} \quad (\tr{resp.} \quad \Fl(G,\mu^{-1})^{Newt=[b_1]})\]
 the $\mu$-ordinary Harder-Narasimhan (resp. Newton) stratum. Both of the $\mu$-ordinary strata $\Fl(G,\mu^{-1})^{HN=[b_1]}$  and $\Fl(G,\mu^{-1})^{Newt=[b_1]}$ are \emph{closed} in $\Fl(G,\mu^{-1})$, by the semi-continuity of the maps $Newt$ and $HN$.
 Proposition \ref{P: HN and Newt} implies that we have the inclusion \[\Fl(G,\mu^{-1})^{HN=[b_1]}\subset \Fl(G,\mu^{-1})^{Newt=[b_1]}.\]
\begin{proposition}[\cite{F2} Conjecture 1 (2)]\label{P: Newt HN ordinary}
	Assume that $G$ is quasi-split.
	Then we have always \[\Fl(G,\mu^{-1})^{HN=[b_1]}=\Fl(G,\mu^{-1})^{Newt=[b_1]}.\]
	In particular $\Fl(G,\mu^{-1})^{HN=[b_1]}\neq \emptyset$ in this case.
\end{proposition}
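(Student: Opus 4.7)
The inclusion $\Fl(G,\mu^{-1})^{HN=[b_1]}\subset\Fl(G,\mu^{-1})^{Newt=[b_1]}$ has just been noted before the proposition, so the problem reduces to fixing a geometric point $x\in \Fl(G,\mu^{-1})^{Newt=[b_1]}(C,\Ol_C)$ and showing $HN(x)=[b_1]$. Equivalently, writing $HN(x)=w_0(-\nu(\E_1,\E_{1,x},f))$, one must upgrade the Mazur-type inequality $\nu(\E_1,\E_{1,x},f)\leq \nu(\E_{1,x})=w_0(-\mu^\diamond)$ of Proposition \ref{P: HN and Newt}(1) to an equality. Since $G$ is quasi-split the element $\mu^\diamond$ is $\Gamma$-fixed, so its centralizer $M:=M_{[b_1]}$ is a standard Levi of $G$ defined over $\Q_p$, with associated standard parabolic $P$. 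By Theorem \ref{T: G-bundles}, $\E_{b_1}$ carries a canonical Newton reduction to $P$ whose induced $M$-bundle is semistable of central slope $\mu^\diamond$.

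The plan is to show that the modification $(\E_1,\E_{1,x},f)$ admits a reduction to $P$ compatible with the Newton reduction of $\E_{1,x}\simeq\E_{b_1}$, and that the resulting $P$-filtration realizes the HN filtration of $(\E_1,\E_{1,x},f)$ with vector $w_0(-\mu^\diamond)$. The existence of such a $P$-reduction is a Tannakian Hodge-Newton phenomenon: because $\nu([b_1])$ is maximal in $B(G,\mu)$, the inequality $\nu(\E_{1,x,V})\leq w_0(-\mu^\diamond)\cdot V$ is saturated for every $V\in\Rep\,G$, and by the classical Hodge-Newton decomposition for filtered $F$-isocrystals applied representation by representation (and patched via the uniqueness and tensor-compatibility of Hodge-Newton splittings), both $\E_1$ and $\E_{1,x}$ then admit compatible reductions to $P$ under which the position of the modification pushes forward to a central $M$-Hodge type. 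In Schubert language, this places $x$ in the closed sub-variety of $\Fl(G,\mu^{-1})$ coming from $\Fl(P,\mu^{-1,w})\hookrightarrow\Fl(G,\mu^{-1})$ in the notation of the proof of Theorem \ref{T: property HN strata}, for an appropriate minimal-length $w\in W_P\backslash W/W_{P_{\mu^{-1}}}$.

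Given such a $P$-reduction, pushing forward along $P\twoheadrightarrow M$ produces an admissible modification of $M$-bundles with central $M$-Newton $\mu^\diamond$ and central $M$-Hodge type. By Proposition \ref{P: HN vectors} and Theorem \ref{T: HN fil} applied over $M$, this $M$-modification is semistable, with HN vector a central cocharacter lying in $X_\ast(Z_M)_\Q$. Lifting back up to $G$ via the parabolic induction pattern from the proof of Theorem \ref{T: property HN strata}, the central $M$-slope produces a $P$-filtration of the $G$-modification $(\E_1,\E_{1,x},f)$ with associated slope vector $w_0(-\mu^\diamond)\in\Nc(G)$. By the uniqueness part of Theorem \ref{T: HN fil} combined with the Mazur-type bound above, this $P$-filtration must be the HN filtration of $(\E_1,\E_{1,x},f)$ and equality $\nu(\E_1,\E_{1,x},f)=w_0(-\mu^\diamond)$ is forced, yielding $HN(x)=[b_1]$.

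The main obstacle is the Tannakian Hodge-Newton decomposition invoked in the second paragraph, namely that a modification saturating the Mazur bound in every $G$-representation admits a reduction to the centralizer parabolic $P$. For $G=\GL_n$ this follows from Fargues's polygon analysis in \cite{F2}, and the general $G$ case is obtained by applying the $\GL_n$ result to a faithful representation together with the Tannakian uniqueness of the Hodge-Newton splitting. The quasi-split hypothesis enters decisively at exactly this point: it is what guarantees that $\mu^\diamond$ is $\Gamma$-fixed and that $M$ and $P$ descend to $\Q_p$, so that the Newton reduction of $\E_{b_1}$ and the sought reduction of $\E_1$ can be matched compatibly over $C$. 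Without quasi-splitness, $\nu([b_1])$ need not equal $\mu^\diamond$ (\cite{HN1} Example 3.1), the parabolic $P$ may fail to exist over $\Q_p$, and the argument breaks down, as indeed it must by the general failure of the conclusion.
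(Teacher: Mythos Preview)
Your strategy differs substantially from the paper's proof, which is a one-line citation: since $G$ is quasi-split one has $[\nu_{b_1}]=\mu^\diamond$, and then the equality of HN and Newton vectors follows from \cite{CI} Proposition~48 (which treats the vector-bundle case directly; the passage to general $G$ is immediate via Theorem~\ref{T: HN fil}). Your Hodge--Newton decomposition route is a reasonable alternative outline, but as written it contains a genuine error and a real gap.

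The error is the claim that ``the position of the modification pushes forward to a central $M$-Hodge type.'' This is false in general: the $M$-Hodge type is some $\lambda\in S_M(\mu^{-1})$, and there is no reason for $\lambda$ to lie in $X_\ast(Z_M)$ unless $\mu$ itself is central in $M$ (e.g.\ when $\mu^\diamond=\mu$). What \emph{is} central in $M$ is the $M$-\emph{Newton} vector of $\E_{1,x,M}$, namely $w_{0,M}(-\mu^\diamond)=-\mu^\diamond$, because $M=\mathrm{Cent}(\mu^\diamond)$. The semistability you want then follows, but from a different argument than the one you cite: apply Proposition~\ref{P: HN and Newt}(1) over $M$ to get $\nu_M(\text{HN})\leq -\mu^\diamond$ in $\Nc(M)$; since a dominant element bounded above by a central element must equal it, the $M$-HN vector is central, hence the $M$-modification is semistable. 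Your references to Proposition~\ref{P: HN vectors} and Theorem~\ref{T: HN fil} do not supply this.

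The gap is in your ``main obstacle,'' the existence of a $P$-reduction of the modification compatible with the Newton reduction of $\E_{b_1}$. Your proposed route --- Hodge--Newton for filtered isocrystals applied representation by representation, then Tannakian patching --- is not a proof: the splittings you obtain on each $V$ are splittings of vector-bundle modifications, and you have not explained why they assemble into a $P$-reduction of the $G$-modification (uniqueness of Hodge--Newton splittings is stated for filtered isocrystals, not for this category). There is in fact a much more direct argument available: transport the canonical Newton $P$-reduction of $\E_{1,x}\simeq\E_{b_1}$ along $f^{-1}$ to get a $P$-reduction of $\E_1$ on $X\setminus\{\infty\}$, then extend it over $\infty$ using properness of $\E_1/P\to X$. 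If you intend this, say so; as written the paragraph does not establish the reduction. Finally, your last step (``by uniqueness \dots\ this $P$-filtration must be the HN filtration'') is also incomplete: having exhibited a filtration with semistable graded pieces and decreasing slopes is what identifies it as the HN filtration, not the Mazur bound plus uniqueness.
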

\begin{proof}
Let $C|E$ be any algebraically closed perfectoid field.
We have to show that for any point $x\in \Fl(G,\mu^{-1})(C,\Ol_C)$ such that $\nu(\E_{1,x})^\ast=[\nu_{b_1}]$, then $\nu(\E_1,\E_{1,x},f)^\ast=[\nu_{b_1}]$. Since $G$ is quasi-split, $[\nu_{b_1}]=\mu^\diamond$. Then this follows from \cite{CI} Proposition 3.9.
\end{proof}

Let $[b_0]\in B(G,\mu)$ be the unique \emph{basic} element. By the last two subsections, we have the \emph{open} subspaces $\Fl(G,\mu^{-1})^{Newt=[b_0]}$ and $\Fl(G,\mu^{-1})^{HN=[b_0]}$ of $\Fl(G,\mu^{-1})$. Proposition \ref{P: HN and Newt} implies that we have the inclusion \[\Fl(G,\mu^{-1})^{Newt=[b_0]}\subset\Fl(G,\mu^{-1})^{HN=[b_0]}.\]
In section \ref{section full HN}, we will classify the case when the following equality holds \[\Fl(G,\mu^{-1})^{Newt=[b_0]}=\Fl(G,\mu^{-1})^{HN=[b_0]}.\]

\section{Dualities for Newton and HN stratifications}\label{section twin towers}
Let
$[b]\in B(G)_{basic}$ be a \emph{basic} element. Fix a representative $b\in G(\breve{\Q}_p)$ of $[b]$. We have the associated reductive group $J_b$ over $\Q_p$, which is an inner form of $G$.  Fix an isomorphism $J_{b,\breve{\Q}_p}\cong G_{\breve{\Q}_p}$. Let $\Bun_G$ be the groupoid of $G$-bundles on the Fargues-Fontaine curve, cf. \cite{F4} section 2, which is a \emph{small $v$-stack} (over $\F_p$) in the sense of \cite{S}.

\subsection{The twin towers principle}\label{subsection twin towers}
In \cite{CFS} 5.1, we have introduced the so called
``twin towers principle", which is the following isomorphism
$$
\Bun_{J_b} \cong \Bun_G,
$$
that is to say there is an equivalence of groupoids between $G$-bundles and $J_b$-bundles on the curve.
In fact, $J_b\times X$ is the twisted pure inner form of $G\times X$ obtained by twisting by the $G$-torsor $\E_b$,
$$
J_b \times X = \underline{\mathrm{Aut}} (\E_b)
$$
as a group over the curve.
If $\E$ is a $G$-bundle on $X$ one associates to it the $J_b$-bundle
$$
\underline{\text{Isom}} (\E_b,\E).
$$
At the level of points,  the preceding isomorphism of small $v$-stacks gives us the bijection
$$
B(J_b)\st{\sim}{\lra} B(G)
$$
that sends $[1]$ to $[b]$ and $[b^{-1}]$ to $[1]$. Here $[b^{-1}]\in B(J_b)$ is the class defined by \[b^{-1}\in J_b(\breve{\Q}_p)=G(\breve{\Q}_p).\]
In fact, we have the following commutative diagrams on the compatibilities for Newton maps and Kottwitz maps:
\[\xymatrix{
B(J_b)\ar[d]_{\nu_{J_b}}\ar[r]^\sim&B(G)\ar[d]^{\nu_G}\\
\Nc(H)\ar[r]^{\cdot \nu([b])} & \Nc(H),
}
\qquad \xymatrix{
B(J_b)\ar[d]_{\kappa_{J_b}}\ar[r]^\sim&B(G)\ar[d]^{\kappa_G}\\
\pi_1(H)_\Gamma\ar[r]^{ +\kappa([b])} & \pi_1(H)_\Gamma.
}\]
Let us make a comment on the notations. Here we have identified $\Nc(G)=\Nc(J_b)=\Nc(H)$ and $\pi_1(G)_\Gamma=\pi_1(J_b)_\Gamma=\pi_1(H)_\Gamma$, where $H$ is a fixed quasi-split inner form of $G$ (and thus of $J_b$).
Recall that $\pi_1(G)_\Gamma$ is an abelian group, for which we will write the group law additively and the identity as 0; on the other hand, $\Nc(G)\subset X_\ast(G)_\Q/G$, the later has a commutative ordered monoid structure, and we will write its semi-group low multiplicatively.


The isomorphism $\Bun_{J_b}\cong\Bun_G$ respects modifications of a given type $\mu$, that is to say it identifies the corresponding Hecke stacks of modifications (see subsection \ref{subsection Hecke stack}). Let $\{\mu\}$ be a conjugacy class of cocharacter $\mu: \G_{m,\ov{\Q}_p}\ra G_{\ov{\Q}_p}$. In the rest of this section, we
assume that $[b]\in B(G,\mu)$. The isomorphism $J_{b, \ov{\Q}_p}\cong G_{\ov{\Q}_p}$ induces a conjugacy class of cocharacter $\{\mu\}$ of $J_b$. Then 
$$[b^{-1}]\in B(J_b,\mu^{-1})$$ 
is the basic element (in $B(J_b)$), and $[b^{-1}]\mapsto [1]$ via the above bijection $B(J_b)\xrightarrow{\sim} B(G)$. 
One thus has $$J_{b^{-1}}\cong G.$$ 
Recall that in \cite{CFS} 4.1 we have introduced the following generalized Kottwitz sets 
\[B(G,0,\nu_{b}\mu^{-1}):=\{[b']\in B(G)\;|\,\kappa([b'])=0,\quad \nu([b'])\leq \nu([b])w_0(-\mu^\diamond)\}\] and  \[B(J_b,0,\nu_{b^{-1}}\mu):=\{[b'']\in B(J_b)\;|\,\kappa([b''])=0,\quad \nu([b''])\leq \nu([b^{-1}])\mu^\diamond\},\] 
which are finite subsets of $B(G)$ and $B(J_b)$ respectively. They contain the trivial classes $[1]\in  B(G)$ and $[1]\in B(J_b)$ respectively.
One checks directly the following lemma:
\begin{lemma}\label{L: bijections Kottwitz sets}
	The bijection $B(J_b)\st{\sim}{\lra} B(G)$ induces the following bijections:
	\[B(J_b,\mu^{-1})\st{\sim}{\lra} B(G,0,\nu_{b}\mu^{-1}), \quad B(J_b,0,\nu_{b^{-1}}\mu)\st{\sim}{\lra}B(G,\mu).\]
\end{lemma}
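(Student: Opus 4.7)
The plan is to prove this lemma by directly unwinding the defining conditions of the Kottwitz sets and applying the two commutative diagrams displayed just above. The key point is that these diagrams tell us precisely how the Newton and Kottwitz invariants transform under the bijection $B(J_b) \xrightarrow{\sim} B(G)$: if $[b''] \in B(J_b)$ maps to $[b'] \in B(G)$, then $\nu_G([b']) = \nu_{J_b}([b'']) \cdot \nu([b])$ and $\kappa_G([b']) = \kappa_{J_b}([b'']) + \kappa([b])$. I will also use that translation by $\nu([b])$ on $\Nc(H)$ preserves the partial order, and that $\kappa([b]) = \mu^\sharp$ and $\nu([b])$ is central (since $[b] \in B(G,\mu)$ is basic).

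\textbf{First bijection.} Take $[b''] \in B(J_b, \mu^{-1})$, so $\nu_{J_b}([b'']) \leq (\mu^{-1})^\diamond = \omega_0(-\mu^\diamond)$ and $\kappa_{J_b}([b'']) = (\mu^{-1})^\sharp = -\mu^\sharp$. Multiplying both sides of the Newton inequality by $\nu([b])$ and invoking the left diagram, we obtain $\nu_G([b']) \leq \nu([b])\,\omega_0(-\mu^\diamond)$. Adding $\kappa([b]) = \mu^\sharp$ to both sides of the Kottwitz equality and invoking the right diagram, we obtain $\kappa_G([b']) = 0$. This is exactly the condition $[b'] \in B(G, 0, \nu_b \mu^{-1})$. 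Running the argument backwards produces the inverse, which shows the restricted map is a bijection.

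\textbf{Second bijection.} Here I first compute $\nu_{J_b}([b^{-1}])$. Since $[b^{-1}] \in B(J_b)$ maps to $[1] \in B(G)$, the left diagram yields $0 = \nu_G([1]) = \nu_{J_b}([b^{-1}]) \cdot \nu([b])$, hence $\nu_{J_b}([b^{-1}]) = \nu([b])^{-1}$. For $[b''] \in B(J_b, 0, \nu_{b^{-1}} \mu)$, the defining condition $\nu_{J_b}([b'']) \leq \nu([b])^{-1} \mu^\diamond$ translates via multiplication by $\nu([b])$ to $\nu_G([b']) \leq \mu^\diamond$, and the condition $\kappa_{J_b}([b'']) = 0$ translates via addition of $\kappa([b])$ to $\kappa_G([b']) = \mu^\sharp$. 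Thus $[b'] \in B(G,\mu)$, and again reversing the argument gives the inverse.

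The proof is essentially a bookkeeping exercise; no serious obstacle arises once the compatibility diagrams are in hand. The only subtleties to check are the normalizations $(\mu^{-1})^\diamond = \omega_0(-\mu^\diamond)$ and $(\mu^{-1})^\sharp = -\mu^\sharp$, the identity $\nu_{J_b}([b^{-1}]) = \nu([b])^{-1}$ deduced above, and the fact that the partial order on $\Nc(H)$ is invariant under translation by any fixed element of $\Nc(H)$ (so the Newton inequalities can be shifted freely), all of which are immediate from the definitions recalled in the excerpt.
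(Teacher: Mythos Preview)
Your proof is correct and is precisely the direct verification the paper has in mind: the paper omits the argument entirely, stating only ``One checks directly the following lemma,'' and your unwinding of the Newton and Kottwitz conditions via the two compatibility diagrams is exactly that check.
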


Let $E=E(G,\{\mu\})$ and $\breve{E}=\wh{E^{ur}}$ be the completion of the maximal unramified extension of $E$. 
Consider the following $p$-adic flag varieties (as adic spaces) over $\breve{E}$: \[\Fl(G,\mu),\quad \Fl(G,\mu^{-1}), \quad \Fl(J_b,\mu), \quad \tr{and}\quad \Fl(J_b,\mu^{-1}).\] 
We have identifications:
\[\Fl(G,\mu)=\Fl(J_b,\mu),\quad \Fl(G,\mu^{-1})=\Fl(J_b,\mu^{-1}).\]
To summarize, we have the following data:
\begin{itemize}
	\item the triples $(G,\{\mu^{-1}\}, [1])$ and $(J_b,\{\mu\}, [1])$ (which we call the \emph{Hodge-Tate side} for $G$ and $J_b$ respectively), 
	\item the local Shtuka data $(G,\{\mu\}, [b])$ and $(J_b, \{\mu^{-1}\}, [b^{-1}])$ (which we call the \emph{de Rham side} for $G$ and $J_b$ respectively).
\end{itemize}

In the rest of this section, we will assume that $\mu$ is \emph{minuscule}.

\subsection{Newton strata on the de Rham side}\label{subsection newt de rham}
In the last section, we studied the geometry of $\Fl(G,\mu^{-1})$ by modifications of the trivial $G$-bundle $\E_1^G$.
Now we study the flag variety $\Fl(G,\mu^{-1})=\Fl(J_b,\mu^{-1})$ by modifications of the $J_b$-bundle $\E_{b^{-1}}^{J_b}$.
From the local Shimura datum $(J_b,\{\mu^{-1}\},[b^{-1}])$, in \cite{CFS} subsection 5.3,  we have constructed  a stratification of $\Fl(J_b,\mu^{-1})$ by locally closed subsets
\[ \Fl(J_b,\mu^{-1})=\coprod_{[b']\in B(J_b,0,\nu_{b^{-1}}\mu)}\Fl(J_b,\mu^{-1},b^{-1})^{Newt=[b']},\]
which we call the Newton\footnote{In \cite{CFS} this is called the Harder-Narasimhan stratification. Here we change the terminology and modify the notation, since later we will introduce another stratification with the same index set, which we will call the Harder-Narasimhan stratification following \cite{DOR}, as an analogy of that introduced in last section.} stratification.
Let $C|\breve{E}$ be an algebraically closed perfectoid field.
For any point $x\in \Fl(J_b,\mu^{-1})(C,\Ol_C)$, we get a modification $\E_{b^{-1},x}^{J_b}$ of the $J_b$-bundle $\E_{b^{-1}}^{J_b}$ on the Fargues-Fontaine $X=X_{C^\flat}$. Then by definition \[x\in \Fl(J_b,\mu^{-1},b^{-1})^{Newt=[b']}(C,\Ol_C) \quad \Longleftrightarrow\quad b(\E_{b^{-1},x}^{J_b})=[b'].\]
We have the associated $p$-adic period domain
 \[\Fl(J_b,\mu^{-1},b^{-1})^a:=\Fl(J_b,\mu^{-1},b^{-1})^{Newt=[1]},\]which is the maximal open stratum.
 
 Similarly, starting from the local Shimura datum $(G,\{\mu\},[b])$ we can study the geometry of $\Fl(G,\mu)$ by modifications of the $G$-bundle $\E_b^G$. More precisely, we have the Newton stratification
 \[ \Fl(G,\mu)=\coprod_{[b']\in B(G,0,\nu_{b}\mu^{-1})}\Fl(G,\mu,b)^{Newt=[b']},\]
 and the associated $p$-adic period domain
 \[\Fl(G,\mu,b)^a:=\Fl(G,\mu,b)^{Newt=[1]}.\]
 
 Recall that inside $\Fl(G,\mu^{-1})$ and $\Fl(J_b, \mu)$, we have respectively the open Newton strata $\Fl(G,\mu^{-1})^{Newt=[b]}$ and $\Fl(J_b,\mu)^{Newt=[b^{-1}]}$ introduced in subsection \ref{subsection newt}. 
\begin{lemma}\label{L: dual open Newton}
Under the identification $\Fl(G,\mu^{-1})=\Fl(J_b,\mu^{-1})$, we have \[\Fl(J_b,\mu^{-1},b^{-1})^a=\Fl(G,\mu^{-1})^{Newt=[b]}.\]
Similarly, under the identification $\Fl(G,\mu)=\Fl(J_b,\mu)$, we have \[\Fl(G,\mu,b)^a =\Fl(J_b,\mu)^{Newt=[b^{-1}]}.\]
\end{lemma}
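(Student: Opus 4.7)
The plan is to exploit the twin towers equivalence $\Bun_{J_b} \cong \Bun_G$ from subsection \ref{subsection twin towers}, which sends a $G$-bundle $\E$ to the $J_b$-bundle $\underline{\Isom}(\E_b,\E)$ and is compatible with modifications of any fixed type at $\infty$ (as recorded in loc.\ cit., the equivalence identifies the Hecke stacks, since the inner twist becomes trivial locally at $\infty$ via the tautological trivialization of $\E_b$). The key observation is that under this equivalence $\E_b \mapsto \underline{\mathrm{Aut}}(\E_b) = J_b \times X = \E_1^{J_b}$, and consequently, via the bijection $B(J_b) \xrightarrow{\sim} B(G)$ sending $[b^{-1}] \mapsto [1]$, one has $\E_1 \mapsto \E_{b^{-1}}^{J_b}$.

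Now fix an algebraically closed perfectoid field $C|\breve{E}$ and a point
\[ x \in \Fl(G,\mu^{-1})(C,\Ol_C) = \Fl(J_b,\mu^{-1})(C,\Ol_C), \]
the two flag varieties being identified via the fixed isomorphism $J_{b,\breve{\Q}_p} \simeq G_{\breve{\Q}_p}$. On the $G$-side, $x$ yields the $\mu^{-1}$-modification $(\E_1, \E_{1,x}, f)$ at $\infty$. Applying the twin towers equivalence to this modification triple produces a $\mu^{-1}$-modification of $\E_{b^{-1}}^{J_b}$, and by the compatibility of both constructions with the Beauville-Laszlo description of modifications at $\infty$, this is precisely the triple $(\E_{b^{-1}}^{J_b}, \E_{b^{-1},x}^{J_b}, \wt f)$ associated with $x$ viewed on the de Rham side for $J_b$.

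Since the equivalence preserves isomorphism classes of bundles, we deduce that $\E_{1,x} \simeq \E_b$ if and only if $\E_{b^{-1},x}^{J_b} \simeq \E_1^{J_b}$; in other words, $x \in \Fl(G,\mu^{-1})^{Newt=[b]}$ iff $x \in \Fl(J_b,\mu^{-1},b^{-1})^{Newt=[1]} = \Fl(J_b,\mu^{-1},b^{-1})^a$, yielding the first identity. The second identity $\Fl(G,\mu,b)^a = \Fl(J_b,\mu)^{Newt=[b^{-1}]}$ is obtained by exactly the same argument applied to the dual local Shimura datum $(J_b, \{\mu^{-1}\}, [b^{-1}])$, using that the opposite group under the twin towers is $J_{b^{-1}} = G$, and that under this second equivalence the starting bundle $\E_b$ on the $G$ de Rham side corresponds to the trivial $J_b$-bundle $\E_1^{J_b}$. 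The only point that requires attention—essentially bookkeeping—is that the identification $\Fl(G,\mu^{-1}) = \Fl(J_b,\mu^{-1})$ used in the statement is compatible with the parameterization of modifications at $\infty$ on both sides; this compatibility is automatic because all the identifications ultimately come from the fixed trivialization of $\E_b$ at $\infty$ that realizes $J_{b,\breve{\Q}_p} \simeq G_{\breve{\Q}_p}$.
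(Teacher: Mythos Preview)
Your argument is correct and follows essentially the same route as the paper: both proofs pick a geometric point $x$, invoke the twin towers equivalence $\Bun_{J_b}\cong\Bun_G$ (under which $\E_1^G\leftrightarrow\E_{b^{-1}}^{J_b}$ and $\E_b^G\leftrightarrow\E_1^{J_b}$), and read off the chain of equivalences $\E_{b^{-1},x}^{J_b}\simeq\E_1^{J_b}\Leftrightarrow\E_{1,x}^G\simeq\E_b^G$. You have simply made explicit the compatibility of the equivalence with modifications at $\infty$ and with the identification of flag varieties, which the paper takes for granted from subsection~\ref{subsection twin towers}.
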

\begin{proof}
We only check the identity $\Fl(J_b,\mu^{-1},b^{-1})^a=\Fl(G,\mu^{-1})^{Newt=[b]}$.
Let $C$ be any algebraically closed complete extension of $\breve{E}$ and let  $x\in \Fl(G,\mu^{-1})(C,\Ol_C)=\Fl(J_b,\mu^{-1})(C,\Ol_C)$. Then  we have
\[\begin{split} x\in \Fl(J_b,\mu^{-1},b^{-1})^a(C,\Ol_C)&\quad\Leftrightarrow\quad \E_{b^{-1},x}^{J_b}=\E_1^{J_b}\\ &\quad\Leftrightarrow\quad \E_{1,x}^G=\E_b^G\\ &\quad\Leftrightarrow\quad x\in \Fl(G,\mu^{-1})^{Newt=[b]}(C,\Ol_C). \end{split}\]
\end{proof}

\subsection{Dualities for local Shimura varieties}\label{subsection twin loc shimura}
Consider the local Shimura variety with infinite level
\[\M (G,\mu,b)_\infty,\]which is the moduli space classifying
 \begin{itemize}
 	\item either
 modifications of type $\mu$  between $\E^G_b$ and $\E^G_1$ over $\Fl(G,\mu,b)^a$,
 \item or modifications of type $\mu^{-1}$ between $\E^G_1$ and $\E^G_b$ over $\Fl(G,\mu^{-1})^{Newt=[b]}=\Fl(J_b,\mu^{-1},b^{-1})^a$.
\end{itemize}
 Similarly, we have the local Shimura variety with infinite level \[\M (J_b,\mu^{-1},b^{-1})_\infty,\] which is the moduli space classifying
\begin{itemize}
	\item either
modifications of type $\mu^{-1}$ between $\E^{J_b}_{b^{-1}} $ and $\E^{J_b}_{1}$ over $\Fl(J_b,\mu^{-1},b^{-1})^a$,
\item or
modifications of type $\mu$ between $\E^{J_b}_{1}$  and $\E^{J_b}_{b^{-1}} $ over $\Fl(J_b,\mu)^{Newt=[b^{-1}]}=\Fl(G,\mu,b)^a$.
\end{itemize}
The twin tower principle induces a $J_b(\Q_p)\times G(\Q_p)$-isomorphism of local Shimura varieties with infinite level (\cite{FalTwin, FTwin}, \cite{SW1} section 7, \cite{SW} Corollary 23.2.3.)
$$
\M (G,\mu,b)_\infty \st{\sim}{\lra} \M (J_b,\mu^{-1},b^{-1})_\infty
$$
as diamonds on $\Spa (\breve{E})^\diamond$. This fits into a twin towers diagram 
using the de Rham and Hodge-Tate period morphisms that allow us to collapse each tower on its base 
$$
\begin{tikzcd}[row sep=large,column sep=large]
\M (G,\mu,b)_\infty \ar[d,"\pi_{dR}", two heads]\ar[r,"\sim"]  \ar[rd,"\pi_{HT}" description]
\ar[d, dash, dotted , bend right=40, start anchor={[xshift=-12mm]}, end anchor={[xshift=-12mm]}, start anchor={[yshift=3mm]}, end anchor={[yshift=-3mm]},"G(\Q_p)"']
& \M (J_b,\mu^{-1},b^{-1})_\infty \ar[d,"\pi_{dR}", two heads] 
\ar[ld,"\pi_{HT}" description]    \ar[d, dash, dotted, bend left=40, start anchor={[xshift=12mm]}, end anchor={[xshift=12mm]}, start anchor={[yshift=3mm]}, end anchor={[yshift=-3mm]},"J_b(\Q_p)"]
\\
\Fl(G,\mu,b)^{a,\Diamond} & \Fl(J_b,\mu^{-1},b^{-1})^{a,\Diamond}.   
\end{tikzcd}
$$

\subsection{Harder-Narasimhan strata on the de Rham side}\label{subsection HN de Rham}
Now we continue to look at the $p$-adic flag variety $\Fl(J_b,\mu^{-1})$. In \cite{DOR} chapter IX.6, Dat-Orlik-Rapoport introduced a stratification of $\Fl(J_b,\mu^{-1})$ by locally closed subsets (indexed by Harder-Narasimhan vectors)
\[\Fl(J_b,\mu^{-1})=\coprod_{v\in \H(\mathbb{J},\mu^{-1})}\Fl(J_b,\mu^{-1},b^{-1})^{HN=v}, \]which they called the Harder-Narasimhan stratification. Here $\mathbb{J}$ is the augmented group attached to $J_b$ and $b^{-1}$ as in \cite{DOR} Example 9.1.22. Let $C|\breve{E}$ be an algebraically closed perfectoid field.
For any $x\in \Fl(J_b,\mu^{-1})(C,\Ol_C)$, we have the modification triple $(\E_{b^{-1},x}^{J_b}, \E_{b^{-1}}^{J_b},f)$ of $J_b$-bundles on $X=X_{C^\flat}$.
We write \[\nu(\E_{b^{-1},x}^{J_b}, \E_{b^{-1}}^{J_b},f)=\nu(N_{b^{-1}},\Fc_x)\] for the filtered $F$-isocrystal with $J_b$-structure $(N_{b^{-1}},\Fc_x)$ attached to $(\E_{b^{-1},x}^{J_b}, \E_{b^{-1}}^{J_b},f)$ constructed in subsection \ref{subsection G-str}.
\begin{proposition}\label{P: HN and Newt de Rham}
For any $x\in \Fl(J_b,\mu^{-1})(C,\Ol_C)$, 
\begin{enumerate}
\item we have the following inequality in $\Nc(J_b)$
\[\nu(\E_{b^{-1},x}^{J_b}, \E_{b^{-1}}^{J_b},f)\leq \nu(\E_{b^{-1},x}^{J_b}).\]
\item The Newton map for $J_b$ induces an injection \[ \H(\mathbb{J},\mu^{-1}) \hookrightarrow B(J_b,0,\nu_{b^{-1}}\mu).\]
\end{enumerate}
\end{proposition}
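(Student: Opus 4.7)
My plan is to mirror the proof of Proposition \ref{P: HN and Newt}, with adaptations for the fact that the modification $(\E_{b^{-1},x}^{J_b}, \E_{b^{-1}}^{J_b}, f)$ is no longer admissible. For part (1), by Theorem \ref{T: HN fil} applied to the category $\Modif_X$ via the functor $\pi$ to $\varphi\mathrm{-}\Fil\Mod_{C/\breve{\Q}_p}$, together with the Tannakian reduction of \cite{RR} Lemma 2.2 (see also \cite{DOR} Proposition 6.3.9), the inequality in $\Nc(J_b)$ reduces to its vector bundle analog: for each $(V,\rho)\in\Rep\,J_b$,
\[
\nu(\E_{b^{-1},x,V}^{J_b}, \E_{b^{-1},V}^{J_b}, f_V)\leq \nu(\E_{b^{-1},x,V}^{J_b})\in\Nc(\GL(V)).
\]
This is the non-admissible version of \cite{CI} Proposition 44 / \cite{F2} Proposition 14. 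The key observation to prove it is that any sub-$F$-isocrystal $D'\subset N_{b^{-1},V}$ endowed with the induced filtration gives a sub $\D'\subset\pi(\Eu_V)$, and via Beauville-Laszlo, a sub-bundle $\mathcal{E}_1'$ of the modified bundle $\E_{b^{-1},x,V}^{J_b}$ whose degree matches $\deg\D'=t_H(\D')-t_N(D')$; since not every sub-bundle of $\E_{b^{-1},x,V}^{J_b}$ is of this form, the maximal slope in $\varphi\mathrm{-}\Fil\Mod$ is bounded above by the maximal slope in $\Bun_X$, yielding the dominance inequality. Alternatively, one may invoke the twin tower equivalence $\Bun_{J_b}\cong\Bun_G$ to convert the triple into a modification of $G$-bundles with trivial second component (an object of $\Modif_X^{ad'}$), then apply the swap equivalence $\Modif_X^{ad'}\cong\Modif_X^{ad}$ and Proposition \ref{P: HN and Newt}(1) directly, translating via the commutative diagrams relating the Newton and Kottwitz maps under twin towers.

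For part (2), the plan is to establish that each HN vector $v\in\H(\mathbb{J},\mu^{-1})$ lifts uniquely to an element of $B(J_b,0,\nu_{b^{-1}}\mu)$ along the Newton map. Writing $v=\nu(\E_{b^{-1},x}^{J_b}, \E_{b^{-1}}^{J_b}, f)$ for some $x$, part (1) gives $v\leq \nu(\E_{b^{-1},x}^{J_b})$; the class $[b']\in B(J_b)$ of $\E_{b^{-1},x}^{J_b}$, arising as a modification of $\E_{b^{-1}}^{J_b}$ of type $\mu^{-1}$, lies in $B(J_b,0,\nu_{b^{-1}}\mu)$ by the Mazur inequality together with Kottwitz compatibility (using that $[b]\in B(G,\mu)$ basic forces $\kappa([b^{-1}])=-\mu^\sharp$, which cancels against the modification type to give $\kappa=0$). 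To show $v$ itself lifts, apply the Tannakian characterization analogous to Lemma \ref{L: index set HN}: for each $(V,\rho)\in\Rep\,J_b$ verify that $\rho(v)\in\Nc(\GL(V),0,\nu_{b^{-1},V}\cdot\rho(\mu))$. The Kottwitz class vanishes since the total degree $t_H(\D_V)-t_N(\D_V)$ equals zero by the above compatibility, while the Newton bound comes from part (1) chained with the Mazur bound on $\nu(\E_{b^{-1},x,V}^{J_b})$. Injectivity is then immediate: the joint map $\nu\times\kappa:B(J_b)\to\Nc(J_b)\times\pi_1(J_b)_\Gamma$ is injective by Kottwitz, so on the fixed fiber $\kappa=0$ the Newton map is already injective.

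The main obstacle is the vector bundle inequality underlying part (1): one must carefully verify the degree-matching formula $\deg\mathcal{E}_1'=t_H(\D')-t_N(D')$ under Beauville-Laszlo for sub-$F$-isocrystals equipped with induced filtrations, with the correct sign conventions, and check that the resulting sub-bundles are indeed saturated (so that they contribute to the Harder-Narasimhan filtration of $\mathcal{E}_1$). If instead one goes through the twin tower route, the technical burden shifts to tracking Newton and Kottwitz classes through the identifications $\Nc(J_b)=\Nc(G)$ and $\pi_1(J_b)_\Gamma=\pi_1(G)_\Gamma$ and through the twist by $\nu([b])$ and $\kappa([b])$ encoded in the commutative diagrams of subsection \ref{subsection twin towers}. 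Once either of these has been carried out, the remaining pieces assemble routinely along the lines of Proposition \ref{P: HN and Newt} and Lemma \ref{L: index set HN}.
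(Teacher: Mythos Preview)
Your proposal is correct. In fact, the paper's own proof is precisely your second (``alternative'') route for part (1): it uses the twin towers equivalence $\Bun_{J_b}\cong\Bun_G$ to identify $\nu(\E_{b^{-1},x}^{J_b}, \E_{b^{-1}}^{J_b},f)$ with $\nu_b\cdot\nu(\E_{1,x},\E_1,f)=\nu_b\cdot\nu(\E_1,\E_{1,x},f^{-1})$ (the last equality being the swap $\Modif_X^{ad'}\simeq\Modif_X^{ad}$ of subsection \ref{subsection adm modif}) and $\nu(\E_{b^{-1},x}^{J_b})$ with $\nu_b\cdot\nu(\E_{1,x})$, then observes that multiplication by the central element $\nu_b$ preserves the partial order, reducing (1) to Proposition \ref{P: HN and Newt}(1). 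For part (2) the paper does exactly the same: it simply declares the statement equivalent to Proposition \ref{P: HN and Newt}(2) via Lemma \ref{L: bijections Kottwitz sets}, rather than rerunning a Tannakian argument directly on the $J_b$ side as you sketch.

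Your first route for part (1)---a direct proof of the $\GL(V)$-inequality for non-admissible modifications via sub-$F$-isocrystals and Beauville--Laszlo---is a genuine alternative not taken in the paper. It would work, but as you correctly flag, it requires carefully checking the degree formula $\deg\E_1'=t_H(\D')-t_N(D')$ and saturation in the non-trivial-isocrystal setting. The twin towers route sidesteps this entirely by transporting the problem back to the admissible case where \cite{CI} Proposition 44 and \cite{F2} Proposition 14 already apply. The paper's approach is therefore shorter and more in keeping with the duality theme of the section, while your direct route would make the proposition self-contained at the cost of reproving the vector-bundle estimate in greater generality.
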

\begin{proof}
Under the bijection $B(J_b)\st{\sim}{\ra}B(G), \quad [b^{-1}]\mapsto [1]$ and the identification $\Fl(J_b,\mu^{-1})=\Fl(G,\mu^{-1})$, we have \[\nu(\E_{b^{-1},x}^{J_b}, \E_{b^{-1}}^{J_b},f)=\nu_b\nu(\E_{1,x},\E_1,f_1)=\nu_b\nu(\E_1, \E_{1,x},f^{-1}_1)\] (for the second ``$=$'', see subsection \ref{subsection adm modif}) and $\nu(\E_{b^{-1},x}^{J_b})=\nu_b\nu(\E_{1,x})$. Since $[b]$ is basic, we have \[\nu_b\nu(\E_1, \E_{1,x},f_1^{-1})\leq \nu_b\nu(\E_{1,x}) \Leftrightarrow \nu(\E_1, \E_{1,x},f_1^{-1})\leq \nu(\E_{1,x}).\] Therefore (1) is equivalent to Proposition \ref{P: HN and Newt} (1).
The proof of (2) is similar, which is equivalent to Proposition \ref{P: HN and Newt} (2) (using Lemma \ref{L: bijections Kottwitz sets}).
\end{proof}

We get the composition \[ |\Fl(J_b,\mu^{-1})|\ra \H(\mathbb{J},\mu^{-1})\hookrightarrow B(J_b,0,\nu_{b^{-1}}\mu) \]and we write $b(\E_{b^{-1},x}^{J_b}, \E_{b^{-1}}^{J_b},f)\in B(J_b,0,\nu_{b^{-1}}\mu)$. 
Therefore, starting from the local Shimura datum $(J_b,\{\mu^{-1}\},[b^{-1}])$, for the flag variety $\Fl(J_b,\mu^{-1})$,  we have the Harder-Narasimhan stratification:
\[ \Fl(J_b,\mu^{-1})=\coprod_{[b']\in B(J_b,0,\nu_{b^{-1}}\mu) }\Fl(J_b,\mu^{-1},b^{-1})^{HN=[b']}.\]

Similarly, starting from the local Shimura datum $(G,\{\mu\},[b])$, for the flag variety $\Fl(G,\mu)$, we have the Harder-Narasimhan stratification:
\[ \Fl(G,\mu)=\coprod_{[b']\in B(G,0,\nu_{b}\mu^{-1})}\Fl(G,\mu,b)^{HN=[b']}.\]
The open Harder-Narasimhan stratum $\Fl(G,\mu,b)^{HN=[1]}$ corresponds to the trivial element $[1]\in B(G,0,\nu_{b}\mu^{-1})$, which is also denoted by (cf. \cite{RZ} chapter 1)
\[\Fl(G,\mu,b)^{wa}:=\Fl(G,\mu,b)^{HN=[1]}.\]
Moreover, by Proposition \ref{P: HN and Newt de Rham} (1) (applied to $(G,\{\mu\},[b])$), we have
\[\Fl(G,\mu,b)^a\subset \Fl(G,\mu,b)^{wa}.\]
Alternatively, the above inclusion also follows from the theorem of Colmez-Fontaine (cf. \cite{FF} chapter 10). Our argument above shows that it is equivalent to the inclusion $\Fl(G,\mu^{-1})^{Newt=[b]}\subset \Fl(G,\mu^{-1})^{HN=[b]}$, see subsection \ref{subsection Newton vs HN}.

\subsection{Dualities for Newton and Harder-Narasimhan stratifications}
Consider the $p$-adic flag variety $\Fl(J_b,\mu^{-1})$. Starting from the datum $(J_b, \{\mu^{-1}\}, [b^{-1}])$ (\emph{de Rham side for the group $J_b$}), by subsection \ref{subsection HN de Rham}
 we have the Harder-Narasimhan stratification: 
\[ \Fl(J_b,\mu^{-1})=\coprod_{[b']\in B(J_b,0,\nu_{b^{-1}}\mu) }\Fl(J_b,\mu^{-1},b^{-1})^{HN=[b']}.\]
By subsection \ref{subsection newt de rham}, we have also the Newton stratification:
\[ \Fl(J_b,\mu^{-1})=\coprod_{[b']\in B(J_b,0,\nu_{b^{-1}}\mu) }\Fl(J_b,\mu^{-1},b^{-1})^{Newt=[b']}.\]
Recall  the Harder-Narasimhan and Newton stratifications for $\Fl(G,\mu^{-1})$ introduced in section \ref{section Newt HN flag} starting from the datum $(G,\{\mu^{-1}\},[1])$ (\emph{Hodge-Tate side for the group $G$}):
\[\Fl(G,\mu^{-1})=\coprod_{[b']\in B(G,\mu)} \Fl(G,\mu^{-1})^{HN=[b']}, \quad \Fl(G,\mu^{-1})=\coprod_{[b']\in B(G,\mu)} \Fl(G,\mu^{-1})^{Newt=[b']}.\]
We have the following generalization of Lemma \ref{L: dual open Newton},
which says that under the twin towers principle, the corresponding Harder-Narasimhan and Newton stratifications introduced in section \ref{section Newt HN flag} and here are identical. 
\begin{theorem}\label{T: duality HN Newt}
Under the identification \[\Fl(G,\mu^{-1})=\Fl(J_b,\mu^{-1}),\] 
for any $[b']\in B(G,\mu)$ corresponding to $[b'']\in B(J_b,0,\nu_{b^{-1}}\mu)$ under the bijection (cf. Lemma \ref{L: bijections Kottwitz sets}) \[B(G,\mu)\st{\sim}{\lra} B(J_b,0,\nu_{b^{-1}}\mu),\] we have
\begin{enumerate}
	\item 
$ \Fl(G,\mu^{-1})^{HN=[b']}=\Fl(J_b,\mu^{-1},b^{-1})^{HN=[b'']}.$
\item $ \Fl(G,\mu^{-1})^{Newt=[b']}=\Fl(J_b,\mu^{-1},b^{-1})^{Newt=[b'']}.$
\end{enumerate}
\end{theorem}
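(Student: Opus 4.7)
The plan is to derive both equalities from the twin towers equivalence $\Bun_{J_b}\cong \Bun_G$ of subsection \ref{subsection twin towers}, using the fact that this equivalence identifies the Hecke stacks of modifications of a given type. Under it the trivial $G$-bundle $\E_1^G$ corresponds to the trivial $J_b$-bundle $\E_{b^{-1}}^{J_b}=\underline{\Isom}(\E_b^G,\E_1^G)$, so applied pointwise at $x\in \Fl(G,\mu^{-1})(C,\Ol_C)=\Fl(J_b,\mu^{-1})(C,\Ol_C)$, the $G$-modification of type $\mu^{-1}$
\[
(\E_1^G,\,\E_{1,x}^G,\,f_x)
\]
is transported to the $J_b$-modification of type $\mu^{-1}$
\[
(\E_{b^{-1}}^{J_b},\,\E_{b^{-1},x}^{J_b},\,f_x).
\]

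For part (2), the isomorphism class $[\E_{1,x}^G]\in B(G)$ thus corresponds to $[\E_{b^{-1},x}^{J_b}]\in B(J_b)$ under the bijection $B(J_b)\st{\sim}{\lra}B(G)$. The restriction of this bijection to the relevant Kottwitz subsets is precisely the identification $B(G,\mu)\simeq B(J_b,0,\nu_{b^{-1}}\mu)$ of Lemma \ref{L: bijections Kottwitz sets}, so the Newton strata coincide pointwise.

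For part (1), the task is to match the HN labels on both sides. The order-reversing equivalence $\Modif^{ad}\simeq \Modif^{ad'}$ of subsection \ref{subsection adm modif} (which preserves HN vectors) identifies the modification $(\E_{b^{-1},x}^{J_b},\E_{b^{-1}}^{J_b},f_x)$ with the admissible triple $(\E_{b^{-1}}^{J_b},\E_{b^{-1},x}^{J_b},f_x^{-1})$. Since the twin towers equivalence is a tensor equivalence of categories of bundles with modifications, and since the HN filtration of Theorem \ref{T: HN fil} on a modification-with-$G$-structure is defined Tannakianly from the Harder-Narasimhan formalism on $\Modif^{ad}_X$, the two HN filtrations agree as filtrations of the underlying admissible modification. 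The resulting HN vectors differ only by the canonical $\nu_b$-shift between $\Nc(G)$ and $\Nc(J_b)$ (when both are identified with $\Nc(H)$ via the commutative diagrams of subsection \ref{subsection twin towers}), giving the relation $\nu(\E_{b^{-1},x}^{J_b},\E_{b^{-1}}^{J_b},f_x)=\nu_b\cdot\nu(\E_1^G,\E_{1,x}^G,f_x)$ already recorded in the proof of Proposition \ref{P: HN and Newt de Rham}. Combined with the injections $\mathcal{H}(G,\mu)\hookrightarrow B(G,\mu)$ and $\mathcal{H}(\mathbb{J},\mu^{-1})\hookrightarrow B(J_b,0,\nu_{b^{-1}}\mu)$ of Propositions \ref{P: HN and Newt} and \ref{P: HN and Newt de Rham}, and with the matching $\nu_b$-shift between the Newton maps on $B(G)$ and $B(J_b)$, the HN labels on both sides are identified exactly by the bijection of Lemma \ref{L: bijections Kottwitz sets}.

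The main obstacle is the Tannakian compatibility step in part (1): one must verify that transporting a $G$-structure on an admissible modification through the twin towers equivalence to a $J_b$-structure preserves the HN filtration of the underlying object, with only a $\nu_b$-shift in how this filtration is then interpreted as a cocharacter. This reduces, via Theorem \ref{T: HN fil} and the functoriality of the Harder-Narasimhan formalism under exact faithful tensor functors, to the observation that for each $(V,\rho)\in\Rep\,G$ the vector bundle on $X$ obtained from $\rho$ and a $G$-bundle $\E$ coincides, up to twisting by $\E_b^G$, with the vector bundle obtained from the corresponding representation of $J_b$ applied to $\underline{\Isom}(\E_b^G,\E)$. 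From there all bookkeeping collapses into the Newton-map diagrams of subsection \ref{subsection twin towers}.
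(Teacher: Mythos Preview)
Your proof is correct and follows essentially the same route as the paper: part (2) is exactly the argument of Lemma \ref{L: dual open Newton} extended to all strata, and part (1) boils down to the identity $\nu(\E_{b^{-1},x}^{J_b},\E_{b^{-1}}^{J_b},f)=\nu_b\cdot\nu(\E_1^G,\E_{1,x}^G,f)$ already established in the proof of Proposition \ref{P: HN and Newt de Rham}, which the paper packages as a commutative square of $HN$ maps with bottom arrow $\cdot\,\nu([b])$.

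One small correction: $\E_{b^{-1}}^{J_b}$ is \emph{not} the trivial $J_b$-bundle; it is the basic bundle attached to $[b^{-1}]\in B(J_b)$ (the trivial $J_b$-bundle is $\E_1^{J_b}$, which under the twin towers equivalence corresponds to $\E_b^G$). This slip does not affect your argument, since you never actually use triviality of $\E_{b^{-1}}^{J_b}$ --- the $\Modif^{ad}\simeq\Modif^{ad'}$ flip is applied on the $G$-side (where $\E_1^G$ really is trivial), and the $J_b$-side HN vector is defined via filtered isocrystals with $J_b$-structure rather than via admissible modifications. Your Tannakian compatibility paragraph is more explicit than the paper's one-line appeal to ``functoriality of the Harder-Narasimhan filtrations,'' but the content is the same.
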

\begin{proof}
The proof for (2) is identical with the proof for Lemma \ref{L: dual open Newton}, which is in fact also \cite{CFS} Proposition 5.3.

The proof for (1) is in fact similar, which follows from the functoriality of the Harder-Narasimhan filtrations and the morphisms $HN$:
let $C$ be any algebraically closed complete extension of $\breve{E}$, we have the following commutative diagram 
\[\xymatrix{
\Fl(J_b,\mu^{-1})(C,\Ol_C)\ar[r]^\sim\ar[d]_{HN_{J_b}}& \Fl(G,\mu^{-1})(C,\Ol_C)\ar[d]^{HN_{G}}\\
\Nc(H)\ar[r]^{\cdot \nu([b])}&\Nc(H)
}\]
where $H$ is a fixed quasi-split inner form of $G$,
see \cite{DOR} p. 252 (3.3), Proposition 9.5.3 (iii) and Remarks 9.6.18 (ii).

\end{proof}

Similarly, starting from $(G,\{\mu\},[b])$ (\emph{de Rham side for the group $G$}),
for the flag variety $\Fl(G,\mu)$, we have the Harder-Narasimhan stratification (see subsection \ref{subsection HN de Rham})
\[ \Fl(G,\mu)=\coprod_{[b']\in B(G,0,\nu_{b}\mu^{-1})}\Fl(G,\mu,b)^{HN=[b']}\]and the Newton stratification (see subsection \ref{subsection newt de rham})
\[ \Fl(G,\mu)=\coprod_{[b']\in B(G,0,\nu_{b}\mu^{-1})}\Fl(G,\mu,b)^{Newt=[b']}\]
introduced in this section.
Recall also the Harder-Narasimhan and Newton stratifications for $\Fl(J_b,\mu)$ in section \ref{section Newt HN flag} starting from the datum $(J_b,\{\mu\}, [1])$ (\emph{Hodge-Tate side for the group $J_b$}):
\[\Fl(J_b,\mu)=\coprod_{[b']\in B(J_b,\mu^{-1})}\Fl(J_b,\mu)^{HN=[b']},\quad  \Fl(J_b,\mu)=\coprod_{[b']\in B(J_b,\mu^{-1})}\Fl(J_b,\mu)^{Newt=[b']}.\]
The following corollary is clear now.
\begin{corollary}\label{C: duality HN Newt}
Under the identification \[\Fl(J_b,\mu)=\Fl(G,\mu),\] 
for any $[b']\in B(G,\mu)$ corresponding to $[b'']\in B(J_b,0,\nu_{b^{-1}}\mu)$ under the bijection (cf. Lemma \ref{L: bijections Kottwitz sets})  \[B(J_b,\mu^{-1})\st{\sim}{\lra} B(G,0,\nu_{b}\mu^{-1}),\] we have
\begin{enumerate}
	\item
$\Fl(J_b,\mu)^{HN=[b']}=\Fl(G,\mu,b)^{HN=[b'']}.$
\item $\Fl(J_b,\mu)^{Newt=[b']}=\Fl(G,\mu,b)^{Newt=[b'']}.$
\end{enumerate}
\end{corollary}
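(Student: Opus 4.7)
The plan is to derive Corollary \ref{C: duality HN Newt} as a formal consequence of Theorem \ref{T: duality HN Newt} applied with the roles of $G$ and $J_b$ interchanged. Concretely, I would feed the theorem with the basic local Shimura datum $(J_b,\{\mu^{-1}\},[b^{-1}])$ in place of $(G,\{\mu\},[b])$. The hypotheses are satisfied since $[b^{-1}]\in B(J_b,\mu^{-1})$ is basic, and the key observation is that the ``dual'' of this new datum is $J_{b^{-1}}=G$. Under this substitution the flag variety on the Hodge-Tate side for $J_b$ becomes $\Fl(J_b,\mu)$, which is identified with the flag variety $\Fl(G,\mu)=\Fl(J_{b^{-1}},\mu)$ on the de Rham side for $G$. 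The index set bijection provided by Lemma \ref{L: bijections Kottwitz sets} (now reading $B(J_b,\mu^{-1})\st{\sim}{\lra}B(G,0,\nu_b\mu^{-1})$) matches the two labellings of strata.

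For statement (2) on Newton strata, this reduction is essentially \cite{CFS} Proposition 5.3, and amounts to checking that for any algebraically closed perfectoid $C|\breve{E}$ and $x\in \Fl(J_b,\mu)(C,\Ol_C)=\Fl(G,\mu)(C,\Ol_C)$, the twin towers equivalence $\Bun_{J_b}\cong\Bun_G$ sends the modification $\E_{1,x}^{J_b}$ of the trivial $J_b$-bundle to the modification $\E_{b,x}^G$ of the $G$-bundle $\E_b^G$. This follows from the fact (recalled in subsection \ref{subsection twin towers}) that the equivalence $\Bun_{J_b}\cong\Bun_G$ is Hecke-equivariant, so preserves modifications of a given type. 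Combined with the commutative diagram $B(J_b)\st{\sim}{\lra}B(G)$ sending $[c]\mapsto[cb]$, one reads off $[b(\E_{1,x}^{J_b})]\mapsto [b(\E_{b,x}^G)]$.

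For statement (1) on Harder-Narasimhan strata, I would run the same functoriality argument as in the proof of Theorem \ref{T: duality HN Newt}(1): the Harder-Narasimhan vector attached to the modification triple is computed through the filtered $F$-isocrystal with group structure, and the twin towers equivalence identifies these filtered $F$-isocrystal data up to a translation by $\nu([b^{-1}])$. Concretely one obtains a commutative diagram
\[\xymatrix{
\Fl(J_b,\mu)(C,\Ol_C)\ar[r]^\sim\ar[d]_{HN_{J_b}} & \Fl(G,\mu)(C,\Ol_C)\ar[d]^{HN_G} \\
\Nc(H)\ar[r]^{\cdot\,\nu([b^{-1}])} & \Nc(H),
}\]
strictly parallel to the diagrams for $\nu$ and $\kappa$ recalled in subsection \ref{subsection twin towers}. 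Together with Lemma \ref{L: bijections Kottwitz sets} this yields the claimed identification of HN strata. Alternatively one may invoke \cite{DOR} Proposition 9.5.3(iii) and Remarks 9.6.18(ii) directly.

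The only real point that requires attention, and which I view as the sole obstacle, is verifying that the twin towers equivalence $\Bun_{J_b}\cong\Bun_G$ is compatible both with Hecke modifications of a fixed type and with the Tannakian Harder-Narasimhan formalism of section 2; once this compatibility is in hand the corollary follows mechanically by symmetry from Theorem \ref{T: duality HN Newt}.
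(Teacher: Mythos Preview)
Your proposal is correct and is exactly the approach the paper intends: the paper offers no separate proof for this corollary beyond the phrase ``clear now,'' meaning precisely that one applies Theorem \ref{T: duality HN Newt} to the dual datum $(J_b,\{\mu^{-1}\},[b^{-1}])$ and uses $J_{b^{-1}}=G$. One cosmetic slip: in your commutative diagram the bottom arrow should be $\cdot\,\nu([b])$ rather than $\cdot\,\nu([b^{-1}])$ to be consistent with the direction $\Fl(J_b,\mu)\to\Fl(G,\mu)$ on top (compare the diagrams in subsection \ref{subsection twin towers}), but this does not affect the argument.
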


\section{Fully Hodge-Newton decomposable case}\label{section full HN}
We keep the notations of the last section. Let $(G,\{\mu\},[b])$ be a local Shimura datum such that $[b]\in B(G,\mu)$ is \emph{basic}. In particular $\mu$ is \emph{minuscule}. We get the dual local Shimura datum $(J_b, \{\mu^{-1}\}, [b^{-1}])$.

Recall that (cf. \cite{GHN} Definition 2.1 and \cite{CFS} 4.3) we have the notion of \emph{fully Hodge-Newton decomposability} for the Kottwitz set $B(G,\mu)$ (or the pair $(G,\{\mu\})$).  Roughly speaking, this means that for any non basic $[b']\in B(G,\mu)$, the pair $([b'],\{\mu\})$ is Hodge-Newton decomposable.

Now we can summarize the various equivalent conditions for fully Hodge-Newton decomposability studied in \cite{CFS} and here.
\begin{theorem}\label{T: fully HN}
The following are equivalent:
\begin{enumerate}
	\item $B(G,\mu)$ is fully Hodge-Newton decomposable.
	\item $B(J_b,\mu^{-1})$ is fully Hodge-Newton decomposable.
	\item  $\Fl(G,\mu,b)^a=\Fl(G,\mu,b)^{wa}$.
	\item  $\Fl(J_b,\mu)^{Newt=[b^{-1}]}=\Fl(J_b,\mu)^{HN=[b^{-1}]}$.
	\item $\Fl(J_b,\mu^{-1},b^{-1})^a=\Fl(J_b,\mu^{-1},b^{-1})^{wa}$.
	\item  $\Fl(G,\mu^{-1})^{Newt=[b]}=\Fl(G,\mu^{-1})^{HN=[b]}$.
\end{enumerate}
\end{theorem}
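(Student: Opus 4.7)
The plan is to deduce the eight-way equivalence by assembling three families of inputs: the group-theoretic comparison of \cite{CFS} Corollary 4.15 between Kottwitz sets of a group and its basic inner twist, the geometric criterion of \cite{CFS} Theorem 6.1 equating fully Hodge-Newton decomposability with the equality of the admissible and weakly admissible loci on the de Rham side, and the dualities for Newton and Harder-Narasimhan stratifications established as Theorem \ref{T: duality HN Newt} and Corollary \ref{C: duality HN Newt} of the present paper.

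First I would establish the equivalences among the purely combinatorial conditions $(1)$--$(4)$. Here $(1) \Leftrightarrow (2)$ and $(3) \Leftrightarrow (4)$ are instances of \cite{CFS} Corollary 4.15 applied respectively to the pair $(G,\{\mu\})$ together with its basic inner twist $J_b$, and to the generalized Kottwitz sets indexed by $(0,\nu_b\mu^{-1})$ and $(0,\nu_{b^{-1}}\mu)$. For $(1) \Leftrightarrow (4)$ and $(2) \Leftrightarrow (3)$, I would use the bijections of Lemma \ref{L: bijections Kottwitz sets}, invoking the compatibilities of the Newton and Kottwitz maps displayed in Subsection \ref{subsection twin towers}: translating by $\nu_b$ and $\kappa(b)$ preserves the partial order, so the criterion of fully Hodge-Newton decomposability, being defined in terms of this order and reductions to proper Levis, transports along the isomorphism $J_{b,\breve{\Q}_p}\cong G_{\breve{\Q}_p}$.

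Next I would bridge combinatorics with geometry by applying \cite{CFS} Theorem 6.1 twice: once to the local Shimura datum $(G,\{\mu\},[b])$ to deduce $(1) \Leftrightarrow (5)$, and once to the dual local Shimura datum $(J_b,\{\mu^{-1}\},[b^{-1}])$ to deduce $(2) \Leftrightarrow (7)$. The remaining two equivalences $(5) \Leftrightarrow (6)$ and $(7) \Leftrightarrow (8)$ are closures of the diagram by the twin towers dualities: Corollary \ref{C: duality HN Newt} identifies $\Fl(G,\mu,b)^a$ with $\Fl(J_b,\mu)^{Newt=[b^{-1}]}$ and $\Fl(G,\mu,b)^{wa}$ with $\Fl(J_b,\mu)^{HN=[b^{-1}]}$, while Theorem \ref{T: duality HN Newt} identifies $\Fl(J_b,\mu^{-1},b^{-1})^a$ with $\Fl(G,\mu^{-1})^{Newt=[b]}$ and $\Fl(J_b,\mu^{-1},b^{-1})^{wa}$ with $\Fl(G,\mu^{-1})^{HN=[b]}$.

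I expect the main obstacle to be the bookkeeping around the generalized fully Hodge-Newton decomposable condition on the twisted Kottwitz sets $B(G,0,\nu_b\mu^{-1})$ and $B(J_b,0,\nu_{b^{-1}}\mu)$; one must verify that the Hodge-Newton reductions to proper Levi subgroups transport correctly under the bijections, given that the Newton map is only equivariant up to translation by the basic class. Once this combinatorial transport is settled, the rest of the argument is a diagram chase assembling the cited theorems.
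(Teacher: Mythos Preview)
Your proposal is correct and follows essentially the same approach as the paper. The paper's proof invokes \cite{CFS} Corollary 4.15 for the equivalences among (1)--(4) (applying it to $(G,\{\mu\},[b])$ and then to the dual datum $(J_b,\{\mu^{-1}\},[b^{-1}])$ rather than via Lemma \ref{L: bijections Kottwitz sets} as you suggest, but this is only a cosmetic reorganization), then \cite{CFS} Theorem 6.1 for $(1)\Leftrightarrow(5)$ and $(2)\Leftrightarrow(7)$, and finally the dualities of Theorem \ref{T: duality HN Newt} and Corollary \ref{C: duality HN Newt} for $(7)\Leftrightarrow(8)$ and $(5)\Leftrightarrow(6)$; your attribution of these last two equivalences to the Corollary and the Theorem respectively is in fact the correct one by content.
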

\begin{proof}
The equivalences $(1)\Leftrightarrow (2)$ follow from \cite{CFS} Corollary 4.16. 
The equivalence $(1)\Leftrightarrow (3)$ was proved in \cite{CFS} Theorem 6.1, thus we get also the equivalence $(2)\Leftrightarrow (5)$. 

The equivalences $(3)\Leftrightarrow(4)$ and $(5)\Leftrightarrow(6)$ follow from Theorem \ref{T: duality HN Newt} and Corollary \ref{C: duality HN Newt} respectively.  Therefore all the above statements are equivalent.

\end{proof}

\begin{remark}\label{R: fully HN}
In the above theorem, the equivalences (1)-(2) are taken from \cite{CFS}, which are purely group theoretical statements. To show the equivalences with the remaining (3)-(6), we have taken \cite{CFS} Theorem 6.1 as one of the key ingredients. On the other hand, one can show\footnote{This is exactly what the author did at the beginning when preparing this article.} the equivalence $(1)\Leftrightarrow (6)$ directly, by using similar (and in fact easier) arguments as in the proof of \cite{CFS} Theorem 6.1. Then using Theorem \ref{T: duality HN Newt} and Corollary \ref{C: duality HN Newt}, we get another proof of \cite{CFS} Theorem 6.1, although essentially the two proofs are the same. We leave the details to the interested reader.
\end{remark}

\begin{remark}
There are some further (conjectural) equivalences for the fully Hodge-Newton decomposable condition (1). For example, we refer the reader to
\begin{enumerate}
	\item \cite{CFS} Conjecture 7.2, in terms of fundamental domains of $p$-adic period domains and local Shimura varieties,
	\item  \cite{GHN} Theorem 2.3, in terms of the geometry of affine Deligne-Lusztig varieties.
\end{enumerate}
\end{remark}

\section{Non minuscule cocharacters}\label{section non minuscule}
In this section, we indicate how to generalize our previous constructions and results to a general (not necessarily minuscule) cocharacter $\mu$.
Roughly, we need to replace flag varieties and local Shimura varieties by the corresponding $B_{dR}^+$-affine Schubert cells and moduli of local $G$-Shtukas respectively. We have Newton and Harder-Narasimhan stratifications on the diamonds $\Gr_\mu$ and $\Gr_{\mu^{-1}}$, generalizing the previous constructions in sections \ref{section Newt HN flag} and \ref{section twin towers}. In particular, the duality results of section \ref{section twin towers} can be generalized to the current setting. We will analyze the geometry of $\Gr_\mu$ using affine Schubert cells of the Levi subgroups, which is in some sense a theory of (generalized) semi-infinite orbits for $B_{dR}^+$-affine Grassmannians. This is
the  key new step to prove the generalization of \cite{CFS} Theorem 6.1. 

\subsection{$B_{dR}^+$-affine Grassmannians and $B_{dR}^+$-affine Schubert varieties}\label{subsection affine schubert}
Let $G$ be a connected reductive group over $\Q_p$. Recall the $B_{dR}^+$-affine Grassmannian $\Gr_G$ is the \emph{small $v$-sheaf} (cf. \cite{SW} 17.2 and \cite{S}) over $\Spd\,\Q_p:=(\Spa \,\Q_p)^\Diamond$ such that for any affinoid perfectoid space $S=\Spa(R,R^+)$ over $\Q_p$, \[\Gr_G(S)=\{(\E,\beta)\}/\simeq\] where
\begin{itemize}
	\item 
	$\E$ is a $G$-torsor over $\Spec B_{dR}^+(R)$, 
	\item $\beta: \E\ra\E^0$ is a trivialization over  $\Spec B_{dR}(R)$; here $\E^0$ is the trivial $G$-torsor, 
\end{itemize}
cf. \cite{F4} 3.1 and \cite{CS} Definition 3.4.1. Equivalently, \[\Gr_G=LG/L^+G,\] where $LG$ and $L^+G$ are the loop groups such that \[LG(\Spa(R,R^+))=G(B_{dR}(R)), \quad \tr{and}\quad L^+G(\Spa(R,R^+))=G(B_{dR}^+(R)).\]
See also \cite{SW} Definition 20.2.1 and Proposition 20.2.2 (where it is called the Beilinson-Drinfeld Grassmannian  over $\Spd\,\Q_p$). By \cite{SW} Lemma 19.1.4, $\Gr_G$ is partially proper. Let $C|\Q_p$ be an algebraically closed perfectoid field and $\Spd \,C:=(\Spa\,C)^\Diamond$. The base change $\Gr_{G,\Spd\, C}$ of $\Gr_G$ to $\Spd\, C$ is given by Definition 19.1.1 of \cite{SW}. 

Let $T\subset B\subset G_{\ov{\Q}_p}$ be a maximal torus inside a Borel subgroup of $G_{\ov{\Q}_p}$. We have the set of dominant cocharacters $X_\ast(T)^+$ of $T$ with respective to $B$, which is a set of representatives for $X_\ast(T)/W$ where $W$ is the absolute Weyl group of $G$. Recall that
we have the Cartan decomposition\footnote{Here we follow \cite{CS} and \cite{CFS} to normalize the sign.}
\[G(B_{dR}(C))=\coprod_{\mu\in X_\ast(T)^+} G(B_{dR}^+(C))\mu(\xi)^{-1}G(B_{dR}^+(C)),\]where $\xi\in B_{dR}^+(C)$ is a fixed uniformizer.
Any $\mu\in X_\ast(T)^+$ defines a closed subfunctor \[\Gr_{\leq \mu}\] of $\Gr_{G, \Spd\, E}$, with an open subfunctor $\Gr_\mu\subset \Gr_{\leq \mu}$,  where $E=E(G,\{\mu\})$ is the field of definition of $\{\mu\}$. By definition (cf. \cite{SW} Definition 19.2.2), $\Gr_{\leq \mu}$ (resp. $\Gr_{\mu}$ ) parametrizes those $(\E,\beta)$ such that over any geometric points $x$, the relative position $Inv(\beta_x)$ is bounded (resp. exactly given) by $\mu$. 
One of the main results of \cite{SW} is the following theorem.
\begin{theorem}[\cite{SW} Theorem 19.2.4, Corollary 19.3.4 and Proposition 20.2.3]
	$\Gr_{\leq \mu}$ is a spatial diamond, and it is proper over $\Spd\, E$. $\Gr_\mu$  is then a partially proper locally spatial diamond. 
\end{theorem}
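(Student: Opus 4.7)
My plan is to follow the strategy pioneered by Scholze--Weinstein in \cite{SW} Chapter 19, which in turn builds on Zhu's and Bhatt--Scholze's work on the Witt-vector affine Grassmannian. The first step is to reduce to the case $G=\GL_n$ by choosing a faithful representation $\rho: G \hookrightarrow \GL_n$; the induced map $\Gr_G \to \Gr_{\GL_n}$ is a closed immersion of $v$-sheaves (using that $\GL_n/G$ is quasi-affine for reductive $G$), and it identifies $\Gr_{\leq \mu,G}$ with a closed sub-$v$-sheaf of $\Gr_{\leq \rho\circ\mu,\GL_n}$. Both spatiality and properness transfer to closed subdiamonds, so it suffices to handle the linear case.

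For $G=\GL_n$, after twisting by the centre we may assume $\mu = (a_1,\dots,a_n)$ with all $a_i \geq 0$, so $\mu$ is a sum of fundamental minuscule cocharacters $\omega_r=(1^r,0^{n-r})$: write $\mu = \omega_{r_1} + \cdots + \omega_{r_k}$. I would then realise $\Gr_{\leq \mu}$ as the image under the multiplication map of the convolution product
\[
m: \Gr_{\leq \omega_{r_1}} \,\wt\times\, \cdots \,\wt\times\, \Gr_{\leq \omega_{r_k}} \longrightarrow \Gr_{\leq \mu}.
\]
Each factor $\Gr_{\leq \omega_{r_i}} = \Gr_{\omega_{r_i}}$ is a minuscule cell, hence by the Bialynicki--Birula map it is isomorphic to the diamond $\Fl(\GL_n,\omega_{r_i})^\Diamond$ of a Grassmannian variety over $E$; in particular it is a proper spatial diamond. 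By induction on $k$, the convolution is a tower of locally trivial $\Gr_{\leq\omega_{r_i}}$-fibrations over a spatial base and is itself a proper spatial diamond; the map $m$ is then a proper surjection onto $\Gr_{\leq\mu}$, and a $v$-descent argument (in the spirit of \cite{SW} section 17) promotes the image to a closed spatial subdiamond.

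Properness of $\Gr_{\leq\mu}$ over $\Spd\,E$ follows either directly from the properness of the convolution (preserved by surjective images of proper maps of $v$-sheaves) or via the valuative criterion for small $v$-sheaves: a point over the generic fibre of a rank-one valuation ring extends uniquely by the Beauville--Laszlo principle for $B_{dR}^+$-lattices, and the bound $\leq\mu$ is preserved by the upper semicontinuity of the relative position $\Inv$. Finally, $\Gr_\mu$ is the complement inside $\Gr_{\leq\mu}$ of the finite union $\bigcup_{\mu'<\mu}\Gr_{\leq\mu'}$ of closed spatial subdiamonds (the order ideal below $\mu$ in $X_\ast(T)^+$ being finite), so $\Gr_\mu$ is an open subdiamond of a spatial diamond and is therefore locally spatial over $E$.

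The main obstacle will be proving that the convolution Grassmannian is genuinely a proper spatial diamond and that the multiplication map transports spatiality to its image. In equal characteristic this follows from classical ind-projectivity, but in the mixed-characteristic $B_{dR}^+$ setting one must work entirely within the diamond framework; the cleanest route is the comparison with the Witt-vector affine Grassmannian of Bhatt--Scholze via specialization, or alternatively the inductive construction of perfectoid atlases starting from the minuscule pieces. All of the heavy perfectoid input is concentrated in this step, while everything that follows is formal bookkeeping with closed and open subdiamonds.
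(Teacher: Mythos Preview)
The paper does not prove this theorem at all: it is stated purely as a citation to \cite{SW} Theorem 19.2.4, Corollary 19.3.4 and Proposition 20.2.3, with no argument given. Your proposal is therefore not being compared against anything in the present paper, but rather against the proof in the cited reference.

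That said, your sketch is an accurate summary of the Scholze--Weinstein strategy. The reduction to $\GL_n$ via a closed immersion of affine Grassmannians (using that $\GL_n/G$ is affine for reductive $G$), the Demazure-type resolution by an iterated convolution of minuscule Schubert varieties, and the identification of $\Gr_\mu$ as an open complement of finitely many closed Schubert subvarieties are exactly the ingredients used in \cite{SW} \S19. Your remark that the hard perfectoid input is concentrated in showing the convolution is a proper spatial diamond is also correct; in \cite{SW} this is handled by exhibiting the convolution as a successive tower of bundles whose fibres are diamonds of projective varieties (the minuscule flag varieties), which are spatial and proper by the representability results of \cite{SW} \S17. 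One minor imprecision: you do not need a separate $v$-descent argument to ``promote the image'' --- the Demazure map $m$ is proper and surjective onto the closed sub-$v$-sheaf $\Gr_{\leq\mu}$, and spatiality of the target then follows from \cite{S} (the image of a proper map from a spatial diamond is spatial, or equivalently one uses that $\Gr_{\leq\mu}$ is already a closed sub-$v$-sheaf of a spatial diamond once the $\GL_n$ case is known).
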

By definition we have a stratification of diamonds
\[\Gr_{\leq \mu}=\coprod_{\mu'\leq \mu}\Gr_{\mu'}. \]
In particular if $\mu$ is \emph{minuscule}, we have $\Gr_\mu= \Gr_{\leq \mu}$. 

The inclusion $L^+G\subset LG$ induces
a natural action of $L^+G$ on $\Gr_G$. For any perfectoid affinoid $\Q_p$-algebra $(R, R^+)$, let $\xi\in B_{dR}^+(R)$ denote a generator of $\ker\,\theta$, where $\theta: B_{dR}^+(R)\ra  R$ is the canonical surjection (cf. \cite{SW} page 138).
For any $\mu\in X_\ast(T)$, we write $\xi^\mu=\mu(\xi)$ and $t^\mu=\mu(\xi)^{-1}$ for the corresponding elements in $LG$. By abuse of notation we also denote $\xi^\mu$ and $t^\mu$ the associated points in $\Gr_G$.
The diamond $\Gr_\mu$ can be described as usual the orbit $L^+Gt^\mu$,
and we have \[\Gr_\mu\simeq \frac{L^+G}{L^+G \cap t^\mu L^+Gt^{-\mu}}.\]
Note that we have a natural action of $G(\Q_p)$ on $\Gr_\mu$  via the inclusion $G(\Q_p)\subset G(B_{dR}^+(\C_p))$.
Recall that for a diamond $\D$, we have its underlying topological space $|\D|$ (\cite{S} Proposition 11.13 and Definition 11.14). If $\D$ is locally spatial, then the topological space $|\D|$ is locally spectral, cf. \cite{S} Propositions 11.18 and 11.19.  We call a sub diamond $\D'$ is dense in $\D$, if $|\D'|\subset |\D|$ is dense. The dimension of a locally spatial diamond is defined to be the maximal length of a chain of specializations in its underlying locally spectral space. We refer the reader to \cite{Hub} section 1.8, \cite{S} section 21 and \cite{BF} section 3.2 for more discussions on dimensions of locally spatial diamonds.
\begin{proposition}\label{P:affine Schubert}
	\begin{enumerate}
		\item The open sub diamond $\Gr_\mu\subset \Gr_{\leq \mu}$ is dense in $\Gr_{\leq \mu}$.
		\item The dimension of  $\Gr_\mu$ (and thus $\Gr_{\leq \mu}$) is $\lan2\rho, \mu\ran$, where $\rho$ is as usual the half sum of positive (absolute) roots of $G$.
	\end{enumerate}
\end{proposition}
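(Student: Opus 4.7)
The plan is to establish both assertions by reducing to an explicit parametrization of Schubert cells, following the classical strategy for equal-characteristic affine Grassmannians, adapted to the $v$-sheaf/diamond framework. A natural reduction step is to fix a faithful representation $G\hookrightarrow \GL_n$ over $\Q_p$, so that $\Gr_G\hookrightarrow \Gr_{\GL_n}$ becomes a closed immersion of $v$-sheaves compatible with the Cartan/Schubert stratifications (the relative position map for $G$ factors through that of $\GL_n$). This reduces both statements to intersecting $\Gr_G$ with Schubert cells of $\Gr_{\GL_n}$, concentrating the analysis on the $\GL_n$ case where one has a concrete lattice-theoretic description.

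For (2), I would use the orbit presentation $\Gr_\mu\simeq L^+G/(L^+G\cap t^\mu L^+G\, t^{-\mu})$ recalled just before the proposition. The big open cell $L^+U_\mu^-\cdot t^\mu\subset \Gr_\mu$, where $U_\mu^-$ is the unipotent radical of the parabolic opposite to $P_\mu$, should be parametrized by the sub-$v$-sheaf of $L^+U_\mu^-$ classifying ``truncated'' elements: for the root space $\mathfrak{g}_{-\alpha}$ with $\alpha>0$ and $\langle\alpha,\mu\rangle>0$, conjugation by $t^{-\mu}$ shifts $\xi$-adic valuations by $\langle\alpha,\mu\rangle$, so the stabilizer in $L^+U_\mu^-$ is cut out exactly by truncating each $\mathfrak{g}_{-\alpha}\otimes B_{dR}^+$ at $\xi^{\langle\alpha,\mu\rangle}$. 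Summing over positive roots produces a Banach--Colmez-style quotient of dimension $\sum_{\alpha>0}\langle\alpha,\mu\rangle=\langle 2\rho,\mu\rangle$; combined with Scholze's dimension theory for locally spatial diamonds (\cite{S}) this yields $\dim\Gr_\mu=\langle 2\rho,\mu\rangle$, and consequently $\dim\Gr_{\leq\mu}=\langle 2\rho,\mu\rangle$ once density is established.

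For (1), it suffices to show $\Gr_{\mu'}\subset\ov{\Gr_\mu}$ for every $\mu'<\mu$. By induction along the Bruhat order on coweights, one reduces to the single-step case $\mu'=\mu-\alpha^\vee$ where $\alpha^\vee$ is a simple coroot with $\langle\alpha,\mu\rangle>0$. In this case I would use the $\SL_2$-subgroup attached to $\alpha$ to write down an explicit one-parameter family over a perfectoid annulus, whose generic fibre lies in $\Gr_\mu$ and whose limit at the origin is the point $t^{\mu'}\in\Gr_{\mu'}$; the $L^+G$-equivariance of the closure of $\Gr_\mu$ then spreads this conclusion to all of $\Gr_{\mu'}$.

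The main obstacle is to execute both the dimension and the density arguments rigorously in the $v$-sheaf setting. The stabilizer $L^+G\cap t^\mu L^+G\, t^{-\mu}$ is a sub-$v$-sheaf rather than an obviously well-behaved group scheme, and the quotient $\Gr_\mu$ must be handled via Scholze's formalism of locally spatial diamonds; likewise, density refers to the underlying topological space $|\Gr_{\leq\mu}|$, so one must verify that the specialization families actually produce convergent sequences of topological points, not merely algebraic limits. I expect the dimension step to be the more delicate one, as it requires carefully matching a Banach--Colmez-style count in the tangential picture with the intrinsic diamond dimension formalism.
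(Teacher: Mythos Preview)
Your approach is essentially the same as the paper's: for (2) you both use the open cell $L^+U\cdot t^\mu$ (with $U$ the unipotent radical of the appropriate parabolic) and identify it with a product of truncated Banach--Colmez spaces $\prod_{\alpha>0,\langle\alpha,\mu\rangle>0}\mathbb{B}_{dR}^+/\xi^{\langle\alpha,\mu\rangle}$, each of dimension $\langle\alpha,\mu\rangle$; for (1) you both construct an $\SL_2$-curve through $t^\mu$ degenerating to $t^{\mu-\alpha^\vee}$ and induct.

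Two small points. First, the $\GL_n$ reduction you mention at the outset is not actually used in the rest of your sketch and is not needed; the paper works directly with $G$ throughout. Second, your induction step in (1) is phrased with a \emph{simple} coroot $\alpha^\vee$, but this does not quite work: when $\langle\alpha,\mu\rangle=1$ one has $\mu-\alpha^\vee=s_\alpha\mu$, so $t^{\mu-\alpha^\vee}$ lies back in $\Gr_\mu$ and the induction stalls. The paper (following Zhu and the Stembridge-type lemma) instead chooses a \emph{positive} coroot $\alpha^\vee$ such that $\mu-\alpha^\vee$ is dominant and still $\geq\lambda$; such a coroot always exists for $\lambda<\mu$ dominant, and with this choice the $\SL_2$-curve argument goes through cleanly.
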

\begin{proof}
	For both statements we may assume that the base field is $\wh{\ov{E}}$.
	
	(1) We imitate the proof of \cite{Zhu} Proposition 2.1.5 (2) in the equal characteristic setting.
	If $\lambda\leq \mu$, then there exists a positive coroot $\alpha$ such that $\mu-\alpha$ is dominant and $\lambda\leq \mu-\alpha\leq \mu$. Thus it suffices to show that $t^{\mu-\alpha}$ is contained in the closure of $\Gr_\mu$. To prove this, we will construct a curve $C\simeq \mathbb{P}^{1,\Diamond}$ in $\Gr_{\leq\mu}$ such that $t^{\mu-\alpha}\in C$ and $C\setminus\{t^{\mu-\alpha}\}\subset \Gr_\mu$. 
	
	For any integer $m$, let $t^{\lambda_m}:=\begin{pmatrix}
	t^m&0\\
	0&1
	\end{pmatrix}$, regarded as an element in $\mathrm{PGL}_2(B_{dR})$. Let $K_m=\mathrm{Ad}_{t^{\lambda_m}}(L^+\SL_2)\subset L\SL_2$. 
	Then \[\sigma_m:=\begin{pmatrix}
	0&-t^m\\t^{-m}&0
	\end{pmatrix}\in K_m.
	\]
	Consider the map $L^+\SL_2\ra\SL_2$ induced by the natural map $\theta: B_{dR}^+(R)\ra R$ for any perfectoid algebra $R$ over $\Q_p$. Let $L^{>0}\SL_2$ be its kernel and set $K_m^{(1)}=\mathrm{Ad}_{t^{\lambda_m}}(L^{>0}\SL_2)$. Then $K_m/K_m^{(1)}\simeq \SL_2$. Let $i_\alpha: \SL_2\ra G$ be the canonical homomorphism associated to $\alpha$. We get the induced map $Li_\alpha: L\SL_2\ra LG$. Let $m=\lan \mu,\alpha\ran -1$ and consider \[C_{\mu,\alpha}:=Li_\alpha(K_m)t^\mu.\] Since $Li_\alpha(K_m^{(1)})\subset L^+G\cap t^\mu L^+Gt^{-\mu}$, $C_{\mu,\alpha}$ is a homogenous space under $K_m/K_m^{(1)}=\SL_2$. One gets then \[C_{\mu,\alpha}\simeq \mathbb{P}^{1,\Diamond},\quad  \tr{and}\quad (L^+G\cap Li_\alpha(K_m))t^\mu\simeq \mathbb{A}^{1,\Diamond}\subset \mathbb{P}^{1,\Diamond}.\] In addition, \[C_{\mu,\alpha}\setminus(L^+G\cap Li_\alpha(K_m))t^\mu =i_\alpha(\sigma_m)t^\mu=t^{\mu-\alpha}L^+G.\]
	Thus $C_{\mu,\alpha}$ is the desired curve.

	(2) Since $\lan2\rho,\mu\ran=\lan  2\rho, -w_0\mu\ran$ and $\dim \Gr_\mu=\dim \Gr_{\mu^{-1}}$ (note that the map $LG\ra LG,\quad g\mapsto g^{-1}$ induces an isomorphism $\Gr_{\mu}\simeq \Gr_{\mu^{-1}}$), we consider $\Gr_{\mu^{-1}}=L^+G\xi^\mu$. Let $\Phi^+$ be the set of positive (absolute) roots of $G$ for the choice of the above Borel subgroup $B\subset G_{\ov{\Q}_p}$.
	Consider the parabolic subgroups $P_\mu$ and $P_{\mu^{-1}}$ defined by the roots $\alpha$ such that $\lan \alpha, \mu\ran \geq 0$ and $\lan \alpha, \mu\ran \leq 0$ respectively. Then $P_{\mu^{-1}}$ is the opposite parabolic of $P_\mu$. Let $U=U_{P_\mu}$ be the unipotent radical of $P_\mu$. Then $U\times P_{\mu^{-1}} \subset G$ defines an open subspace. Consider the associated open functor $L^+(U\times P_{\mu^{-1}} )=L^+U\times L^+P_{\mu^{-1}}  \subset L^+G$. Then since $L^+P_{\mu^{-1}}\subset L^+G\cap \xi^\mu L^+G\xi^{-\mu}$ acts trivially on $\xi^\mu$, we have open functor $L^+U\xi^\mu\subset L^+G\xi^\mu=\Gr_{\mu^{-1}}$. By definition, $U=\prod_{\alpha\in \Phi^+, \lan \alpha,\mu\ran >0}U_\alpha$ with $U_\alpha$ the subgroup of $G$ corresponding to the root $\alpha$. Then \[\begin{split}L^+U\xi^\mu&=(\prod_{\alpha\in \Phi^+, \lan \alpha,\mu\ran >0}L^+U_\alpha)\xi^\mu\\ &=\prod_{\alpha\in \Phi^+, \lan \alpha,\mu\ran >0}(L^+U_\alpha\xi^\mu)\\ &=\prod_{\alpha\in \Phi^+, \lan \alpha,\mu\ran >0} \mathbb{B}_{dR}^+/\xi^{\lan \alpha,\mu\ran}\xi^\mu,\end{split}\]
	where $\mathbb{B}_{dR}^+$ is the functor which sends a perfectoid affinoid $\Q_p$-algebra $(R, R^+)$ to $B_{dR}^+(R)$, and
	the last ``$=$'' comes from the fact that $L^+U_\alpha\simeq \mathbb{B}_{dR}^+$ which acts on $\xi^\mu$ through $\mathbb{B}_{dR}^+/\xi^{\lan \alpha,\mu\ran}$. Moreover the action 
	of $\mathbb{B}_{dR}^+/\xi^{\lan \alpha,\mu\ran}$ on $\xi^\mu$ is free, thus \[L^+U\xi^\mu\simeq \prod_{\alpha\in \Phi^+, \lan \alpha,\mu\ran >0} \mathbb{B}_{dR}^+/\xi^{\lan \alpha,\mu\ran}.\] By \cite{SW} subsection 15.2, for each $\alpha$ as above, the Banach-Colmez space  $\mathbb{B}_{dR}^+/\xi^{\lan \alpha,\mu\ran}$ is a diamond, which is a successive extension of $\mathbb{A}^{1,\Diamond}$. By induction and \cite{BF} Lemma 3.2.5 we have $\dim \mathbb{B}_{dR}^+/\xi^{\lan \alpha,\mu\ran}=\lan \alpha,\mu\ran$. 
	Therefore, \[\dim \Gr_{\mu}=\sum_{\alpha\in \Phi^+, \lan \alpha,\mu\ran >0}\lan \alpha,\mu\ran=\lan 2\rho,\mu\ran.\]
\end{proof}

For $\mu\in X_\ast(T)^+$, let $P_\mu$ be the associated parabolic subgroup of $G_{\ov{\Q}_p}$ as in the above proof, see also subsection \ref{subsection local shtukas}.
Consider the flag variety $\Fl(G,\mu)=G_C/P_\mu$, which is defined over $E$. By \cite{CS} Proposition 3.4.3,Theorem 3.4.5 and \cite{SW} Proposition 19.4.2, there is a natural Bialynicki-Birula map\footnote{Note that according to our convention, here $\pi_{\mu}$ agrees with that in \cite{CS} Proposition 3.4.3, and it is the $\pi_{\mu^{-1}}$ of that in \cite{SW} Proposition 19.4.2.} for diamonds over $E$
\[\pi_\mu: \Gr_\mu\ra \Fl(G,\mu)^\Diamond,\]which is an \emph{isomorphism} if $\mu$ is \emph{minuscule}. Let us recall the definition of $\pi_\mu$. Group theoretically, over $C$ it is the projection
\[\pi_\mu: \Gr_\mu\simeq \frac{L^+G}{L^+G \cap t^\mu L^+G t^{-\mu}}\lra (G_C/P_\mu)^\Diamond \]induced by the projection \[\theta: L^+G(R)=G(B_{dR}^+(R))\ra G(R)\]  for any $C$-perfectoid algebra $R$.
Alternatively, we can give the moduli interpretation as follows. By Tannakian formalism, it is enough to define it for $\GL_n$. In this case, $\mu$ is given by a tuple of integers $(m_1,\dots,m_n)$ with $m_1\geq\dots\geq m_n$. Then $\Gr_\mu$ parametrizes lattices $\Xi\subset B_{dR}(R)^n$ of relative position $(m_1,\dots,m_n)$. For any such lattice, we can define a descending filtration $\Fil_{\Xi}^\bullet$ on the residue $R^n=B_{dR}^+(R)^n/\xi B_{dR}^+(R)^n $ with \[\Fil^i_\Xi=\frac{\xi^i\Xi\cap B_{dR}^+(R)^n}{\xi^i\Xi\cap \xi B_{dR}^+(R)^n}. \]The stabilizer of this filtration defines a parabolic which is conjugate to $P_\mu$. This gives the desired $\pi_\mu: \Gr_\mu\ra \Fl(G,\mu)^\Diamond$. 
From the construction we see that in general, $\pi_\mu$ is surjective, and in fact it is a fibration in diamonds associated to (iterations of) affine spaces.

For $C$-points, recall $\Fl(G,\mu)(C)=\{\Fc\in \Fil_C(\omega^G)\,|\, \Fc \, \tr{has type}\,\mu\}$, where  $\Fil_C(\omega^G)$ is the set of $\Q$-filtrations over $C$ of the standard fiber functor $\omega^G$. The map $\pi_\mu: \Gr_\mu(C,\Ol_C)\ra \Fl(G,\mu)(C)$ sends a $G$-torsor to a ``$G$-filtration''. We can define similarly \[\pi: \Gr_G(C,\Ol_C)\ra \Fil_C(\omega^G),\] such that the following diagram commutes
\[\xymatrix{\Gr_G(C,\Ol_C)\ar[r]^\pi\ar[d]&\Fil_C(\omega^G)\ar[d]\\
	X_\ast(T)^+\ar[r]&X_\ast(G)_\Q/G,
}\]
where the left vertical arrow is given by the Cartan decomposition, the right vertical arrow is given by taking a splitting modulo conjugacy, and the bottom arrow is given by the identifications $X_\ast(T)^+=X_\ast(T)/W=X_\ast(G)/G$ and the inclusion $X_\ast(G)/G\hookrightarrow X_\ast(G)_\Q/G$.
\\

Now let $H$ be an arbitrary linear algebraic group over $\Q_p$. Then we define the $B_{dR}^+$-affine Grassmannian $\Gr_{H}=LH/L^+H$ similarly as above. 
\begin{proposition}\label{P:general aff Grass}
	$\Gr_{H}$ is representable by an ind-diamond, which is ind-proper if $H$ is reductive.
\end{proposition}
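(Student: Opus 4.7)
The plan is to separate the reductive case from the general case. For reductive $H$, the statement has already been established earlier in the excerpt: by Theorem 19.2.4 of \cite{SW} cited above, $\Gr_H = \bigcup_\mu \Gr_{\leq \mu}$ is a filtered union over dominant cocharacters, each $\Gr_{\leq \mu}$ being a spatial diamond proper over $\Spd E(H,\{\mu\})$. Hence $\Gr_H$ is naturally an ind-proper ind-(spatial diamond), which handles both clauses of the proposition simultaneously in that case.

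For a general linear algebraic group $H$ over $\Q_p$, the plan is to reduce to the reductive case via a faithful representation. I would fix a closed embedding $\iota: H \hookrightarrow \GL_n$, inducing compatible inclusions of loop groups $LH \hookrightarrow L\GL_n$ and positive loop groups $L^+H \hookrightarrow L^+\GL_n$, and hence a morphism of quotient $v$-sheaves $\Gr_\iota: \Gr_H \to \Gr_{\GL_n}$. The main step is to show that $\Gr_\iota$ is representable by a locally closed immersion of $v$-sheaves. Intersecting with the exhausting filtration $\Gr_{\GL_n} = \bigcup_\mu \Gr_{\GL_n,\leq\mu}$ would then express $\Gr_H$ as an increasing union of locally closed sub-sheaves of the spatial diamonds $\Gr_{\GL_n,\leq\mu}$; such locally closed subfunctors of spatial diamonds are themselves locally spatial diamonds (by \cite{S}), so $\Gr_H$ emerges as an ind-diamond.

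To verify representability of $\Gr_\iota$, I would use the moduli interpretation: $\Gr_{\GL_n}$ parametrizes $B_{dR}^+$-lattices $\Lambda \subset B_{dR}^n$ together with the tautological trivialization over $B_{dR}$, while $\Gr_H$ classifies such lattice data enhanced with an $H$-reduction of the associated $\GL_n$-torsor compatible with the trivialization. Since $H \subset \GL_n$ is cut out by finitely many polynomial equations, the existence of an $H$-reduction at a given geometric point is a closed condition on the lattice data, and uniqueness (up to $H$) is built in via $LH \hookrightarrow L\GL_n$. An alternative route is to use the Levi decomposition $H = R_u(H) \rtimes L^{\mathrm{red}}$ available in characteristic zero: the reductive factor is covered by the first paragraph, and the unipotent radical reduces by iterated extensions to the case $H = \G_a$, where $\Gr_{\G_a} = B_{dR}/B_{dR}^+ = \varinjlim_n \xi^{-n}B_{dR}^+/B_{dR}^+$ is a colimit of Banach-Colmez spaces in the sense of \cite{SW} \S 15 and hence manifestly an ind-diamond.

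The principal obstacle is the locally closed immersion property of $\Gr_\iota$: a closed embedding of algebraic groups need not induce a closed immersion of affine Grassmannians (as is already classical in the equal characteristic setting), only a locally closed immersion, and making this precise in the $B_{dR}^+$-setting requires a careful analysis of which lattices in $B_{dR}^n$ admit an $H$-structure fiberwise. Note that for non-reductive $H$ one genuinely loses ind-properness, as already visible from $\Gr_{\G_a}$ being an increasing union of affine Banach-Colmez spaces, which confirms that the ind-proper clause is restricted to the reductive case.
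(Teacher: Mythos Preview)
Your overall strategy matches the paper's: embed $H\hookrightarrow \GL_n$, show $\Gr_H\to\Gr_{\GL_n}$ is a locally closed immersion, and then inherit the ind-diamond structure from $\Gr_{\GL_n}$; for reductive $H$, both you and the paper invoke \cite{SW} Theorem~19.2.4 for ind-properness.

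The difference lies in how the ``principal obstacle'' you identify is actually resolved. Your proposed justification---``$H\subset\GL_n$ is cut out by finitely many polynomial equations, so the existence of an $H$-reduction is a closed condition''---is not the right mechanism, and you correctly flag this as unfinished. The paper's fix is sharper and shorter: following \cite{PR} Theorem~1.4, one may choose the faithful representation so that $\GL_n/H$ is \emph{quasi-affine}; then the proof of \cite{SW} Lemma~19.1.5 applies verbatim to show $\Gr_H\to\Gr_{\GL_n}$ is a locally closed embedding. The quasi-affineness of the homogeneous space is the genuine input here (it is what allows one to detect sections of the associated $\GL_n/H$-bundle by a locally closed condition), not merely the closedness of $H$ in $\GL_n$. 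Your alternative route through the Levi decomposition and an explicit treatment of $\Gr_{\G_a}$ is plausible but more laborious, and the paper does not take it.
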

\begin{proof}
	As in the proof of \cite{PR} Theorem 1.4, we can take a faithful representation $H\hookrightarrow \GL_n$ such that $\GL_n/H$ is quasi-affine. Then the arguments in the proof of \cite{SW} Lemma 19.1.5 show that the induced map $\Gr_H \ra \Gr_{\GL_n}$ is a locally closed embedding. Since $\Gr_{\GL_n}$  is representable by an ind-diamond by \cite{SW} 19.3, we conclude that $\Gr_{H}$ is also representable by an ind-diamond. In case $H$ is reductive, Theorem 19.2.4 of \cite{SW} implies that it is ind-proper.
\end{proof}

\subsection{Hecke stacks and $B_{dR}^+$-affine Schubert cells}\label{subsection Hecke stack}
Fix a dominant cocharacter $\mu\in X_\ast(T)^+$ and let $E=E(G,\{\mu\})$. We have the Hecke stack \[Hecke^\mu\] over $\ov{\F}_p$ (here we slightly modify the definition in \cite{F4} 3.4): for any $\Spa(R,R^+)\in \Perf_{\ov{\F}_p}$, $Hecke^\mu(\Spa(R,R^+))$ is the groupoid of quadruples $(\E_1,\E_2,D,f)$, where
\begin{itemize}
	\item $\E_1$ and $\E_2$ are $G$-bundles on the relative Fargues-Fontaine curve $X_R$,
	\item $D$ is an effective Cartier divisor of degree 1 on $X_R$,
	\item $f: \E_1|_{X_R\setminus D}\st{\sim}{\lra}\E_2|_{X_R\setminus D}$ is a modification of $G$-bundles, such that the type of $f_x$ is $\mu$ for any geometric point $x=\Spa(C(x), C(x)^+)\ra \Spa(R,R^+)$.
\end{itemize}
This Hecke stack
fits into the following diagram
\[\xymatrix{ & Hecke^\mu\ar[ld]_{\overleftarrow{h}}\ar[rd]^{\overrightarrow{h}}&\\
	\Bun_{G,\ov{\F}_p}& & \Bun_{G,\ov{\F}_p}\times \mathrm{Div}^1,
} \]
where $\mathrm{Div}^1=\Spd\,\breve{\Q}_p/\varphi^\Z$ is the diamond parametrizing degree one divisors on the Fargues-Fontaine curve and \[\overleftarrow{h}(\E_1,\E_2, f, D)=\E_2, \quad \overrightarrow{h}(\E_1,\E_2, f, D)=(\E_1, D). \]The above diagram is the stack version of the diagram in subsection \ref{subsection modif vect}.

Let $[b]\in B(G,\mu)$ be the basic element. Fix a representative $b\in G(\breve{\Q}_p)$ of $[b]$ and we have the reductive group $J_b$. 
Let \[x_1: \Spa(\ov{\F}_p)\ra [\Spa(\ov{\F}_p)/\ul{G(\Q_p)}]\ra \Bun_{G,\ov{\F}_p}\] and \[x_b: \Spa(\ov{\F}_p)\ra [\Spa(\ov{\F}_p)/\ul{J_b(\Q_p)}]\ra \Bun_{G,\ov{\F}_p}\] be the points associated to the classes $[1]$ and $[b]$. Consider the diamonds $\Gr_\mu$ and $\Gr_{\mu^{-1}}$ over $\breve{E}$.
Then we have the following enlarged diagram\footnote{ We can add $\Sht(G,\mu,b)_\infty$ on the top together with the period maps $\pi_{dR}$ and $\pi_{HT}$ to get a further cartesian square and thus a even larger diagram, cf. \cite{F4} 8.2.} where $\Gr_\mu$ and $\Gr_{\mu^{-1}}$ appear:
\[\xymatrix{ 
	& \Gr_{\mu}\ar[ld]\ar[rd]^{i_b} &  & \Gr_{\mu^{-1}}\ar[rd]\ar[ld]_{i_1} &\\
	\Spa(\ov{\F}_p)\ar[rd]^{x_b}	& & Hecke^\mu\ar[ld]_{\overleftarrow{h}}\ar[rd]^{\overrightarrow{h}}& & \Spa(\ov{\F}_p)\ar[ld]_{(x_1 id)}	\\
	&	\Bun_{G,\ov{\F}_p}&  & \Bun_{G,\ov{\F}_p}\times \mathrm{Div}^1, &
} \]
where both the squares are cartesian. In particular, we get
\[\Gr_{\mu^{-1}}\lra \Bun_{G,\ov{\F}_p}\] which is the composition $
\overleftarrow{h}\circ i_1$, and 
\[ \Gr_{\mu}\lra \Bun_{G,\ov{\F}_p}\] which is the composition
$
\pr\circ\overrightarrow{h}\circ i_b$, where $\pr: \Bun_{G,\ov{\F}_p}\times \mathrm{Div}^1\ra \Bun_{G,\ov{\F}_p}$ is the natural projection.

We have also the version of Hecke stack $Hecke^{\leq\mu}$, which can be defined similarly and it is related to $\Gr_{\leq\mu}$ and $\Gr_{\leq\mu^{-1}}$ as above.

\subsection{Generalized semi-infinite orbits}\label{subsection semi-infinite}

Let $P\subset G$ be a parabolic subgroup over $\Q_p$, $M$ a Levi subgroup contained in $P$, which is identified with the reductive quotient of $P$.  Take a maximal torus inside a Borel $T\subset B\subset G_{\ov{\Q}_p}$ and assume $B\subset P_{\ov{\Q}_p}$ and thus $T\subset M_{\ov{\Q}_p}$.
We have the set of dominant cocharacters $X_\ast(T)^+$. Let $B\cap M_{\ov{\Q}_p}$ be the induced Borel of $M_{\ov{\Q}_p}$. Then we get the set of $M$-dominant cocharacters $X_\ast(T)_M^+$. We have the inclusion $X_\ast(T)^+\subset X_\ast(T)_M^+$.

To simplify notations, the base field in this subsection will be $C$, an algebraically closed perfectoid field of characteristic 0 (in fact an extension $F|\Q_p$ which splits $G$ will be enough). In the following we will write $B_{dR}=B_{dR}(C)$.
Consider $B_{dR}^+$-affine Grassmannians $\Gr_M, \Gr_G$ and $\Gr_P$ (cf. Proposition \ref{P:general aff Grass}) over $C$.
The inclusion $P\subset G$ and the projection $P\ra M$ induce the following diagram of $B_{dR}^+$-affine Grassmannians:
\[\xymatrix{ &\Gr_P\ar[ld]_{\pr}\ar[rd]^{i}&\\
	\Gr_M& & \Gr_G.
}\]
Then
the Iwasawa decomposition \[G(B_{dR})=P(B_{dR})G(B_{dR}^+)\] induces a bijection \[i: \Gr_P(C,\Ol_C)=P(B_{dR})/ P(B_{dR}^+)\st{\sim}{\lra} \Gr_G(C,\Ol_C)=G(B_{dR})/G(B_{dR}^+).\]

Let $U_P\subset P$ be the unipotent radical of $P$. Since $G/M$ (resp. $G/U_P$) is affine (resp. quasi-affine), the natural inclusion $M\subset G$ (resp. $U_P\subset G$) induces a closed embedding $\Gr_M\hookrightarrow \Gr_G$ by \cite{SW} Lemma 19.1.5 (resp. a locally closed embedding $\Gr_{U_P}\hookrightarrow \Gr_G$ by the proof of Proposition \ref{P:general aff Grass}). 
For any $\lambda\in X_\ast(T)^+_M$, we have the locally spatial diamond $\Gr_{M,\lambda}\subset \Gr_M$.
Consider the locally closed sub ind-diamond
\[S_\lambda:=i\big(\pr^{-1}(\Gr_{M,\lambda})\big)\subset \Gr_G.\] 
This is identified with the orbit $LU_P\Gr_{M,\lambda}$ for the natural action $LU_P$ on $\Gr_G$ induced by $LU_P\subset LG$. The natural product defines a map $LU_P\times LM\ra LG$ which induces a map $\Gr_{U_P}\times \Gr_M\ra \Gr_G$. Then
we have \[S_\lambda=\Gr_{U_P}\Gr_{M,\lambda}\subset \Gr_G,\] 
where $\Gr_{U_P}\Gr_{M,\lambda}$ denotes the image of $\Gr_{U_P}\times\Gr_{M,\lambda}$ under $\Gr_{U_P}\times \Gr_M\ra \Gr_G$. 
The Iwasawa decomposition above implies that
\[\Gr_G=\coprod_{\lambda\in X_\ast(T)^+_M}S_\lambda.\]
In the following we consider the partial order $\leq_P$\footnote{Note that this is different from the partial order $\leq_M$ used in some literatures, e.g. \cite{GHKR} 5.1, where one uses simple coroots of  $M$.} on $X_\ast(T)$ (and the restriction to $X_\ast(T)^+_M$) with respective to the coroots appearing in $\Lie\, U_P$. When the setting is clear, we simply write $\lambda_1\leq \lambda_2$ for $\lambda_1,\lambda_2\in X_\ast(T)^+_M$ and $\lambda_1\leq_P\lambda_2$.
For any $\lambda\in X_\ast(T)_M^+$,
like in the classical setting, $S_\lambda$ is of infinite dimensional. Nevertheless, we have
\begin{proposition}\label{P:closure semi-infinite}
	The closure $\ov{S_\lambda}$ of $S_\lambda$ is given by \[S_{\leq\lambda}=\coprod_{\lambda'\leq \lambda}S_{\lambda'}.\] 
\end{proposition}
\begin{proof}
	We adapt the argument of \cite{Zhu} Proposition 5.3.6. We show firstly that $S_{\leq \lambda}$ is closed. 
	First, assume that $G_{der}$ is simply connected. For any highest weight representation $V_\chi$ of $G$, let $\ell_\chi$ be the corresponding highest weight line. Then we have the following description
	\[S_{\leq \lambda}=\bigcap_{V_\chi}\{(\E,\beta)\in \Gr_G |\,\beta^{-1}(\ell_\chi)\subset t^{-\lan \chi,\lambda\ran}(\E_{V_{\chi}})\},\]
	where the intersection runs through all highest weight representations $V_\chi$ of $G$, and $\E_{V_{\chi}}=\E\times^GV_\chi$ is the induced vector bundle.
	It suffices to prove the locus \[\{(\E,\beta)\in \Gr_G |\,\beta^{-1}(\ell_\chi)\subset t^{-\lan \chi,\lambda\ran}(\E_{V_{\chi}})\}\subset \Gr_G\] is closed. This follows from the proof of \cite{SW} Lemma 19.1.4. For general $G$, one can pass to a $z$-extension to reduce to the case when $G_{der}$ is simply connected.
	
	Now we show $\ov{S_{\lambda}}=S_{\leq\lambda}$. For $\lambda'\leq \lambda$, there exists a positive coroot $\alpha$ appearing in $\Lie\, U_P$ such that $\lambda-\alpha$ is $M$-dominant and $\lambda'\leq \lambda-\alpha\leq \lambda$. Then the arguments in the proof of Proposition \ref{P:affine Schubert} (1) apply.
\end{proof}

Let $\mu\in X_\ast(T)^+$ be fixed and consider $\Gr_{G,\mu}$. For any $\lambda\in X_\ast(T)_M^+$, note that \[S_\lambda\cap \Gr_{G,\mu}\neq \emptyset\quad \Longleftrightarrow\quad LU_Pt^\lambda\cap \Gr_{G,\mu}\neq \emptyset.\]Indeed, to prove the direction $``\Rightarrow''$, it suffices to work with an algebraically closed field $C$ and then use the normality of $U_P$.
Set
\[S_M(\mu):=\{\lambda\in X_\ast(T)^+_M|\,S_\lambda\cap \Gr_{G,\mu}\neq \emptyset\}.\]
The stratification $\Gr_G=\coprod_{\lambda\in X_\ast(T)^+_M}S_\lambda$ induces a stratification of locally spatial diamonds
\[ \Gr_{G,\mu}=\coprod_{\lambda\in S_M(\mu)}S_\lambda\cap \Gr_{G,\mu}.\]
For each $\lambda\in S_M(\mu)$, for simplicity we denote $\Gr_{G,\mu,\lambda}=S_{\lambda}\cap \Gr_{G,\mu}$, so that \[\Gr_{G,\mu}=\coprod_{\lambda\in S_M(\mu)}\Gr_{G,\mu,\lambda}.\]

To describe the index set $S_M(\mu)$, first note by \cite{GHKR} Lemma 5.4.1
\[S_M(\mu)\subset \Sigma(\mu)_{M-dom},\]
where $\Sigma(\mu)_{M-dom}\subset X_\ast(T)_M^+$ is the set of $M$-dominant elements in 
$\{\mu'\in X_\ast(T)|\,\mu'_{dom}\leq \mu\}$. 
Indeed, to describe $S_M(\mu)$ we may choose any algebraically closed perfectoid field $C|\Q_p$ and consider the $C$-points of $\Gr_{G,\mu}(C,\Ol_C)$. Then $\lambda\in S_M(\mu)$ if and only if $\lambda\in X_\ast(T)^+_M$, and\[ U_P(B_{dR}(C))t^\lambda\bigcap G(B_{dR}^+(C))t^\mu G(B_{dR}^+(C)) \neq \emptyset\quad \Big(\tr{both as subsets of}\, G(B_{dR}(C))\Big).\]
 Fixing an isomorphism $B_{dR}(C)\simeq C((t))$, we translate these to subsets of $G\Big(C((t))\Big)$. As in the proof of \cite{GHKR} Lemma 5.4.1 (which is purely group theoretical and applies to general base fields), $\lambda\in \Sigma(\mu)_{M-dom}$ if and only if $\lambda\in X_\ast(T)^+_M$ and $U_B\Big(C((t))\Big)t^\lambda \bigcap G(C[[t]])t^\mu G(C[[t]])\neq \emptyset$, where $U_B$ is the unipotent radical of $B$. 

Recall that attached to $\mu$ we have the parabolic subgroup $P_\mu\subset G_{\ov{\Q}_p}$. Let $W$ (resp. $W_P, W_{P_\mu}$) be the absolute Weyl group of $G$ (resp. $P, P_\mu$). 
We have the following inclusion:
\[W\mu\cap X_\ast(T)^+_M\subset S_M(\mu).\]
The set $W\mu\cap X_\ast(T)^+_M$ can be described as \[W\mu\cap X_\ast(T)^+_M={}^PW^{P_\mu}\mu,\] where ${}^PW^{P_\mu}\subset W$ is the set of minimal length representatives in the corresponding coset in $W_P\setminus W/W_{P_\mu}$. 
Then the element \[\lambda_0=\mu\in S_M(\mu)\] is the unique maximal element with respective to the partial order $\leq_P$.
When $\mu$ is minuscule, we have \[W\mu\cap X_\ast(T)^+_M={}^PW^{P_\mu}\mu= S_M(\mu).\] In this case, under the isomorphism $\Gr_{G,\mu}\st{\sim}{\ra}\Fl(G,\mu)^\Diamond$, for $\lambda=w\mu$ with $w\in {}^PW^{P_\mu}$,  we have
\[S_\lambda\cap \Gr_{G,\mu}\simeq (U_Pww_0P_\mu/P_\mu)^\Diamond,\]
where $w_0\in W$ is the element of maximal length.
\begin{remark}
	Using the geometric Satake equivalence for $B_{dR}^+$-affine Grassmannians (cf. \cite{FS}) we have the following representation theoretic description of $S_M(\mu)$:
	
	Let $\wh{G}$ be the dual reductive group of $G$ (over some characteristic zero algebraically closed field) and $\wh{M}\subset \wh{G}$ be Levi subgroup defined by the dual root datum of $M$. Similarly let $\wh{T}\subset \wh{B}\subset \wh{G}$ be the maximal torus dual to $T$ inside the Borel subgroup of $\wh{G}$ dual to $B$. Then we may view $\mu\in X^\ast(\wh{T})^+=X_\ast(T)^+$. Consider the irreducible representation $V_\mu$ of highest weight $\mu$ of $\wh{G}$. The geometric Satake equivalence in the current setting implies that $S_M(\mu)$ is the set of $\wh{M}$-dominant weights of $\wh{T}$ such that the associated highest weight  representations of $\wh{M}$ appear in the restricted representation $V_{\mu}|_{\wh{M}}$:
	\[S_M(\mu)=\{\lambda\in X^\ast(T)^+_{\wh{M}}|\, 0\neq V_\lambda\subset V_{\mu}|_{\wh{M}}\},\]
	where for any $\lambda\in X^\ast(T)^+_{\wh{M}}$, $V_\lambda$ is the irreducible representation of $\wh{M}$ of highest weight $\lambda$.
	
	We identify $W=W(\wh{G})$ and $X^\ast(\wh{T})^+_{\wh{M}}=X_\ast(T)^+_M$. 
	The set $W\mu\cap X^\ast(\wh{T})^+_{\wh{M}}={}^PW^{P_\mu}\mu$ appears naturally when considering the decomposition of $V_{\mu}|_{\wh{M}}$ into irreducible representations of $\wh{M}$: we view $\mu\in X^\ast(\wh{T})^+_{\wh{M}}$, then the associated irreducible representation $V^{\wh{M}}_\mu$ of $\wh{M}$ appears in $V_{\mu}|_{\wh{M}}$. Consider the adjoint action of $W$ on $V_\mu=V_{\mu}|_{\wh{M}}$. For any $w\in {}^PW^{P_\mu}$, we have \[wV^{\wh{M}}_\mu=V_{w\mu}\subset V_{\mu}|_{\wh{M}}.\]
	Any $\lambda\in S_M(\mu)$ is of the form
	\[\lambda=\mu-\sum_{\alpha\in \Delta\setminus \Delta_{\wh{M}}}n_\alpha\alpha,\quad n_\alpha\in \mathbb{N},\forall\, \alpha,\]
	where $\Delta=\Delta_{\wh{G}}$ (resp. $\Delta_{\wh{M}}$) is the set of simple roots of $\wh{G}$ (resp. $\wh{M}$). Therefore, \[W\mu\cap X^\ast(\wh{T})^+_{\wh{M}}=W\mu\cap X_\ast(T)^+_M\subset S_M(\mu)\]
	and $\mu\in S_M(\mu)$ is the unique maximal element.
\end{remark}

Recall that the locally spatial diamond $\Gr_\mu=\Gr_{G,\mu}$ is defined over $\Spd\,E$ with $E=E(G,\{\mu\})$. As usual, let $\breve{E}=\wh{E^{ur}}$ be the completion of the maximal unramified extension of $E$. We will study $\Gr_{G,\mu}$ over $\Spd\,\breve{E}$.
First of all, we explain that
the set $S_M(\mu)$ and the above diagram of $B_{dR}^+$-affine Grassmannians naturally arise when considering reductions of modifications of $G$-bundles to $P$-bundles (resp. $M$-bundles), cf. Lemma \ref{L:modif type}.

For $C|\breve{E}$ any algebraically closed perfectoid field, let $X=X_{C^\flat}$ be the Fargues-Fontaine curve over $\Q_p$ attached to $C^\flat$.
Let $b\in G(\breve{\Q}_p)$ be an element with associated class $[b]\in B(G)$ and the $G$-bundle $\E_b$ on $X$ (cf. \cite{F3}).
For a Levi subgroup $M$ of $G$, recall that (cf. \cite{CFS} Definition 2.5) we have the notion of reductions of $b$ to $M$. Such a reduction is given by an element $b_M\in M(\breve{\Q}_p)$ together with an element $g\in G(\breve{\Q}_p)$ such that $b=gb_M\sigma(g)^{-1}$. Then the $M$-bundle $\E_{b_M}$ is a reduction of $\E_b$.  If $M\subset P$ for some parabolic subgroup $P$ of $G$,
let $b_P\in P(\breve{\Q}_p)$ be the image of $b_M$. This defines a reduction of $b$ to $P$, and thus a reduction of the $G$-bundle $\E_b$ to a $P$-bundle $\E_{b_P}$. By construction, $\E_{b_P}=\E_{b_M}\times^MP$.

For any $x\in \Gr_G(C,\Ol_C)$, we can define a modification $\E_{b,x}$ of $\E$, thus a map \[\Gr_G(C,\Ol_C)\ra H^1_{\textrm{\'et}}(X, G).\] It is functorial in the following sense: we have similar maps \[\Gr_P(C,\Ol_C)\ra H^1_{\textrm{\'et}}(X,P),\quad y\mapsto \E_{b_P,y},\]
\[\Gr_M(C,\Ol_C)\ra H^1_{\textrm{\'et}}(X,M),\quad z\mapsto \E_{b_M,z},\]
by considering modifications of the $P$-bundle $\E_{b_P}$ and the $M$-bundle $\E_{b_M}$ respectively. Then the following diagram commutes:
\[\xymatrix{
	\Gr_G(C,\Ol_C)\ar[r]& H^1_{\textrm{\'et}}(X, G)\\
	\Gr_P(C,\Ol_C)\ar[u]\ar[d]\ar[r]&H^1_{\textrm{\'et}}(X, P)\ar[u]\ar[d]\\
	\Gr_M(C,\Ol_C)\ar[r]& H^1_{\textrm{\'et}}(X,M),
}\]
where the arrows on the right hand side are $\E\mapsto \E\times^PG, \quad \E\mapsto\E\times^PM$, the push forwards of $P$-bundles along $P\subset G$ and $P\ra M$ respectively. Recall that by Iwasawa decomposition, the map $\Gr_P(C,\Ol_C)=P(B_{dR})/ P(B_{dR}^+)\st{\sim}{\lra} \Gr_G(C,\Ol_C)=G(B_{dR})/G(B_{dR}^+)$ is a bijection. For $x\in \Gr_G(C,\Ol_C)$, let $y\in \Gr_P(C,\Ol_C)$ be its inverse image under this bijection. Then \[\E_{b_P,y}\times^PG=\E_{b,x},\] i.e. $\E_{b_P,y}$ is a reduction to $P$ of $\E_{b,x}$. By \cite{CFS} Lemma 2.5, $\E_{b_P,y}$ is the reduction to $P$ of $\E_{b,x}$ induced by the reduction $\E_{b_P}$ of $\E_b$. We will also write \[\E_{b_P,y}=(\E_{b,x})_P\] for this reduction. Recall that we have the decomposition \[\Gr_{G,\mu}(C,\Ol_C)=\coprod_{\lambda\in S_M(\mu)}\Gr_{G,\mu,\lambda}(C,\Ol_C).\] For $\lambda\in S_M(\mu)$, let $\pr_\lambda: \Gr_{G,\mu,\lambda}(C,\Ol_C)\ra \Gr_{M,\lambda}(C,\Ol_C)$ be the projection.
The following generalization of \cite{CFS} Lemma 2.6 is clear now.
\begin{lemma}\label{L:modif type}
	For any $x\in \Gr_{G,\mu}(C,\Ol_C)$, let $\lambda\in S_M(\mu)$ be such that
	$x\in \Gr_{G,\mu,\lambda}(C,\Ol_C)$. Then there is an isomorphism of $M$-bundles
	\[(\E_{b,x})_P\times^PM\simeq \E_{b_M, \pr_{\lambda}(x)},\]
	where $(\E_{b,x})_P$ is the reduction of $\E_{b,x}$ induced by the reduction $\E_{b_P}$ of $\E_b$ as above.
\end{lemma}

\subsection{Newton and Harder-Narasimhan stratifications on $\Gr_{\mu^{-1}}$}\label{subsection Newt HN Hodge-Tate}
We keep the notations as in the last subsection. Consider the affine Schubert cells $\Gr_\mu$ and $\Gr_{\mu^{-1}}$ over $\breve{E}$. 
\\

We first study the geometry of $\Gr_{\mu^{-1}}$ using modifications of the trivial $G$-bundle $\E_1$. Consider the morphism $\Gr_{\mu^{-1}}\ra \Bun_{G,\ov{\F}_p}$ constructed in \ref{subsection Hecke stack}. The induced map on the sets of $C$-valued points can be described in more concrete terms.
Let $C|\breve{E}$ be an algebraically closed perfectoid field. For any $x\in \Gr_{\mu^{-1}}(C,\Ol_C)$, we have the modification \[\E_{1,x}\] of $\E_1$. The isomorphism class of $\E_{1,x}$ defines a point $b(\E_{1,x})\in B(G)$. 
We write $Newt: \Gr_{\mu^{-1}}(C,\Ol_C)\ra B(G)$ for the map.
\begin{proposition}\label{P: image Tate Newton}
	The image of the induced map
	$Newt: \Gr_{\mu^{-1}}(C,\Ol_C)\ra B(G)$ is $B(G,\mu)$.
\end{proposition}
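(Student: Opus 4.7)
The plan is to establish the two inclusions $\mathrm{Im}(Newt)\subseteq B(G,\mu)$ and $B(G,\mu)\subseteq\mathrm{Im}(Newt)$ separately; the argument generalizes Theorem \ref{T: Newton strata}(1) from the minuscule flag variety to the $B_{dR}^+$-affine Schubert cell $\Gr_{\mu^{-1}}$.

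For the first inclusion, let $x\in\Gr_{\mu^{-1}}(C,\Ol_C)$ with associated modification $(\E_1,\E_{1,x},f)$ of type $\mu^{-1}$. The Kottwitz condition $\kappa(b(\E_{1,x}))=\mu^\sharp$ would follow by combining the identity $c_1^G(\E_{b'})=-\kappa([b'])$ from Theorem \ref{T: G-bundles} (together with $c_1^G(\E_1)=0$) with the fact that a modification of type $\mu^{-1}$ at a single closed point shifts the first Chern class by $-\mu^\sharp$; this last assertion reduces, via push-forward along $G\to G^{\mathrm{ab}}$, to an explicit computation on tori. For the Newton inequality $\nu(b(\E_{1,x}))\leq\mu^\diamond$, I would apply Lemma \ref{L: index set HN}(1) and the Tannakian formalism to reduce to the case $G=\GL_n$. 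There it becomes the slope bound saying that a rank-$n$ bundle obtained as a modification of the trivial bundle at one point with relative position $\mu^{-1}$ has Newton polygon lying above the polygon of $\mu^\diamond$ (with the same endpoints), which is a classical Mazur-type inequality provable via the Harder-Narasimhan formalism on $\Modif_X$ (subsection \ref{subsection modif vect}).

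For the reverse inclusion, given $[b']\in B(G,\mu)$, I need to construct $x\in\Gr_{\mu^{-1}}(C,\Ol_C)$ with $\E_{1,x}\simeq\E_{b'}$. Via Tannakian formalism and the Beauville-Laszlo correspondence (\cite{FF} 5.3.1), this is equivalent to producing a $B_{dR}^+$-lattice in the trivial $G$-bundle over $\Spec B_{dR}(C)$ of relative position $\mu^{-1}$ whose gluing with $\E_1|_{X\setminus\{\infty\}}$ realizes $\E_{b'}$. This is the non-emptiness of the Newton stratum $\Gr_{\mu^{-1}}^{Newt=[b']}$ on the affine Schubert cell, which generalizes \cite{CS} Proposition 3.5.7 and \cite{R} Proposition A.9. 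The route I would take is the Hodge-Newton reduction: lifting a representative $b'\in G(\breve{\Q}_p)$ to its Levi $M_{[b']}$, using the classification in Theorem \ref{T: G-bundles} to realize $\E_{b'}$ as induced from an $M_{[b']}$-bundle, and then producing the required $B_{dR}^+$-lattice by pushing forward a lattice on the Levi side—the hypothesis $[b']\in B(G,\mu)$ is used exactly to ensure that the resulting relative position lies in the $W$-orbit of $\mu^{-1}$.

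The main obstacle will be the surjectivity direction. In the minuscule case treated in Theorem \ref{T: Newton strata}, one can work directly on the flag variety via the isomorphism $\pi_\mu$, and non-emptiness of each stratum is a relatively clean appeal to \cite{CS, R}. In the present generality one must ensure the constructed lattice has relative position exactly $\mu^{-1}$ at every geometric point, not merely bounded by $\mu^{-1}$; that is, one must land in the open cell $\Gr_{\mu^{-1}}\subset\Gr_{\leq\mu^{-1}}$ rather than a smaller closed Schubert subvariety. Here the density statement in Proposition \ref{P:affine Schubert}(1), combined with the semi-continuity of the Newton map (\cite{KL}, \cite{SW} 22.5), should allow one to specialize/generize within $\Gr_{\leq\mu^{-1}}$ until an exact-position representative of the desired isomorphism class is found.
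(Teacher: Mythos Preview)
Your first inclusion is essentially what \cite{CS} Proposition 3.5.3 does and is fine, though the paper simply cites that reference.

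For surjectivity, however, your proposed route has a genuine gap. Invoking ``Hodge-Newton reduction'' for an arbitrary $[b']\in B(G,\mu)$ is not justified: the Hodge-Newton condition is a condition on the pair $([b'],\{\mu\})$ that is \emph{not} automatically satisfied---indeed, the whole point of the notion ``fully Hodge-Newton decomposable'' is that this fails in general. So lifting $b'$ to its Newton Levi $M_{[b']}$ gives you nothing a priori about the existence of a modification of $\E_1$ of type exactly $\mu^{-1}$ realizing $\E_{b'}$; there is no reason the Levi-side lattice you produce has relative position in the $W$-orbit of $\mu^{-1}$ rather than some strictly smaller cocharacter. Your fallback via density and semi-continuity does not repair this: upper semi-continuity says that generizing a point can only \emph{decrease} the Newton invariant, so starting from a boundary point of $\Gr_{\leq\mu^{-1}}$ with invariant $[b']$ and moving into the open cell $\Gr_{\mu^{-1}}$ may yield a point with invariant strictly below $[b']$. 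A locally closed Newton stratum in $\Gr_{\leq\mu^{-1}}$ can perfectly well lie entirely in the boundary.

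The paper takes a completely different and much cleaner route: for given $[b']\in B(G,\mu)$, it passes to the \emph{de Rham side} $\Gr_\mu$ and considers the admissible locus $\Gr_\mu^a$ with respect to $b'$ (not the basic element!). By the theorem of Colmez-Fontaine, $\Gr_\mu^a(K,\Ol_K)=\Fl(G,\mu,b')^{wa}(K,\Ol_K)$ for finite $K|\breve{E}$, and the weakly admissible locus is non-empty by \cite{RV} Proposition 3.1 since $[b']\in B(G,\mu)$. Hence $\Gr_\mu^a\neq\emptyset$. Now the key observation is the duality: $x\in\Gr_{\mu^{-1}}^{Newt=[b']}(C,\Ol_C)$ means $\E_{1,x}\simeq\E_{b'}$, which is equivalent to $\E_1\simeq\E_{b',x^\ast}$ for some $x^\ast\in\Gr_\mu(C,\Ol_C)$, i.e.\ $x^\ast\in\Gr_\mu^a(C,\Ol_C)$. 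This directly gives non-emptiness of $\Gr_{\mu^{-1}}^{Newt=[b']}$ without any Levi reduction or deformation argument.
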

\begin{proof}
	The fact that the image of the above map is included in $B(G,\mu)$ follows from \cite{CS} Proposition 3.5.3. 
	
	To show the surjectivity, if $\mu$ is minuscule, then it follows from \cite{R} Proposition A.9. For the general case,
	consider the affine Schubert cell $\Gr_{\mu}$. Let $[b]\in B(G,\mu)$ be any element with a representative $b\in G(\breve{\Q}_p)$. 
	Let $\Gr_{\mu}^a\subset \Gr_{\mu}$ be the associated admissible locus (here $\Gr_{\mu}^a=\Gr_\mu\cap \Gr_{\leq\mu}^a$ and $\Gr_{\leq\mu}^a$ is the admissible locus introduced in the proof of \cite{SW} Proposition 23.3.3). On the other hand, let $\Fl(G,\mu,b)^{wa}\subset\Fl(G,\mu)$ be the associated weakly admissible locus (cf. \cite{RZ, DOR}).
	Then  the Bialynicki-Birula map induces a morphism of diamonds
	\[\pi_{\mu}:  \Gr_{\mu}^a\ra \Fl(G,\mu,b)^{wa,\Diamond}.\]
	By the theorem of Colmez-Fontaine (cf. \cite{FF} chapter 10), we have \[\Gr_{\mu}^a(K,\Ol_K)=\Fl(G,\mu,b)^{wa}(K,\Ol_K)\] for any finite extension $K|\breve{E}$. Thus $\Fl(G,\mu,b)^{wa}\neq \emptyset$ if and only if $\Gr_{\mu}^a\neq\emptyset$. Since $[b]\in B(G,\mu)$, by \cite{RV} Proposition 3.1, $\Fl(G,\mu,b)^{wa}\neq \emptyset$ and thus $\Gr_{\mu}^a\neq\emptyset$.
	Take a point $x\in \Gr_{\mu^{-1}}(C,\Ol_C)$. By definition, \[x\in \Gr_{\mu^{-1}}^{Newt=[b]}(C,\Ol_C)\quad \Leftrightarrow\quad \E_{1,x}\simeq \E_b\quad \Leftrightarrow\quad \E_{1}=\E_{b,x^\ast}\] for some $x^\ast\in \Gr_{\mu}(C,\Ol_C)$. This is equivalent to $x^\ast\in \Gr_{\mu}^a(C,\Ol_C)$. Thus we get for any $[b]\in B(G,\mu)$,  $\Gr_{\mu^{-1}}^{Newt=[b]}(C,\Ol_C) \neq \emptyset$.
\end{proof}
Letting $C$ vary,
we thus get a map $Newt: |\Gr_{\mu^{-1}}|\lra B(G,\mu)$. By \cite{KL} (in the case $G=\GL_n$) and \cite{SW} Corollary 22.5.1, this map is upper semi-continuous.
The Newton stratification of $\Gr_{\mu^{-1}}$ is the following stratification in diamonds over $\breve{E}$ (which is in fact defined over $E$):
\[\Gr_{\mu^{-1}}=\coprod_{[b']\in B(G,\mu)}\Gr_{\mu^{-1}}^{Newt=[b']}. \]
The \emph{open} Newton stratum \[\Gr_{\mu^{-1}}^{Newt=[b]}\] is associated to the \emph{basic} element $[b]\in B(G,\mu)$. Recall that
We have the natural action of $G(\Q_p)$ on $\Gr_{\mu^{-1}}$. Since $\mathrm{Aut}(\E_1)=G(\Q_p)$, for any $[b']\in B(G,\mu)$ the stratum $\Gr_{\mu^{-1}}^{Newt=[b']}$ is stable under the $G(\Q_p)$-action.
\begin{proposition}
	We have the following dimension formula: for $[b']\in B(G,\mu)$,
	\[ \dim \,\Gr_{\mu^{-1}}^{Newt=[b']}=\langle \mu-\nu([b']), 2\rho\rangle.\]
\end{proposition}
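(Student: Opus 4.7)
The plan is to imitate the argument used for the minuscule case in Theorem \ref{T: Newton strata}(2), replacing the local Shimura variety $\M(G,\mu,b)_\infty$ by the moduli of local $G$-Shtukas with infinite level $\Sht(G,\mu,b')_\infty$ (as recalled in subsection \ref{subsection local shtukas}) associated to the datum $(G,\{\mu\},[b'])$. First I would fix a representative $b'\in G(\breve{\Q}_p)$ of $[b']\in B(G,\mu)$. The argument in the proof of Proposition \ref{P: image Tate Newton} shows that the admissible locus $\Gr_\mu^a\subset \Gr_\mu$ is nonempty (as it contains $C$-points for any algebraically closed perfectoid $C|\breve{E}$ by Colmez--Fontaine), and by definition it is open in the locally spatial diamond $\Gr_\mu$, so Proposition \ref{P:affine Schubert}(2) gives
\[\dim\,\Gr_\mu^a=\dim\,\Gr_\mu=\lan 2\rho,\mu\ran.\]

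Next I would exploit the twin towers diagram
\[\xymatrix{ &\Sht(G,\mu, b')_\infty\ar@{->>}[ld]_{\pi_{dR}}\ar@{->>}[rd]^{\pi_{HT}}&\\
	\Gr_\mu^a& &\Gr_{\mu^{-1}}^{Newt=[b']}.
}\]
The de Rham period morphism $\pi_{dR}$ is a pro-\'etale $\ul{G(\Q_p)}$-torsor (by the construction in \cite{SW} section 23 via trivializing the pro-\'etale $G(\Q_p)$-local system coming from the trivialization of $\E_1$), hence preserves dimensions. This yields
\[\dim\,\Sht(G,\mu,b')_\infty=\dim\,\Gr_\mu^a=\lan 2\rho,\mu\ran.\]
On the other hand, $\pi_{HT}$ is a torsor under $\wt{J_{b'}}=\ul{\mathrm{Aut}}(\E_{b'})$, the group diamond of automorphisms of the $G$-bundle $\E_{b'}$; its dimension as a locally spatial diamond is
\[\dim\,\wt{J_{b'}}=\lan \nu([b']), 2\rho\ran,\]
because $\wt{J_{b'}}$ is a successive extension of the classical group $J_{b'}$ (of dimension $0$ as a diamond) by the positive-slope part of the adjoint bundle, which in terms of Banach--Colmez spaces contributes exactly $\lan \nu([b']),2\rho\ran$ (cf. \cite{F4}). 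Subtracting the relative dimension gives
\[\dim\,\Gr_{\mu^{-1}}^{Newt=[b']}=\dim\,\Sht(G,\mu,b')_\infty-\dim\,\wt{J_{b'}}=\lan \mu-\nu([b']), 2\rho\ran.\]

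The main obstacle I anticipate is justifying that dimensions behave additively along the two period morphisms in the diamond setting: for $\pi_{dR}$ this requires that it is pro-\'etale (hence preserves dimension in the sense of \cite{S} section 21), and for $\pi_{HT}$ it requires a careful analysis of $\wt{J_{b'}}$ as a locally spatial diamond, reducing via the slope filtration of $\mathrm{ad}(\E_{b'})$ to Banach--Colmez spaces of the form $\mathbb{B}_{dR}^+/\xi^n$ and $\E(\lambda)$ of positive slope $\lambda>0$, whose dimensions were computed in \cite{SW} section 15. Once these inputs are in place the formula drops out by simple arithmetic. (Note that we do not need the Bialynicki-Birula map here, so the argument works uniformly for all $\mu$, not only minuscule ones.)
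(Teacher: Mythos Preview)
Your proposal is correct and follows essentially the same approach as the paper's own proof: the paper simply refers back to the argument of Theorem \ref{T: Newton strata}(2), replacing the local Shimura variety by the moduli of local $G$-Shtukas diagram from subsection \ref{subsection local shtukas} and invoking the dimension formula $\dim\Gr_\mu=\lan 2\rho,\mu\ran$ of Proposition \ref{P:affine Schubert}(2), which is exactly what you spell out. Your explicit remarks on why $\pi_{dR}$ and $\pi_{HT}$ behave additively on dimensions are a welcome elaboration of points the paper leaves implicit.
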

\begin{proof}
This is essentially the same as the proof of Proposition \ref{T: Newton strata} (2), using the diagram in subsection \ref{subsection local shtukas} and the dimension formula $\dim\, \Gr_\mu=\lan \mu, 2\rho\ran$ of Proposition \ref{P:affine Schubert} (2).
\end{proof}

For any point $x\in \Gr_{\mu^{-1}}(C,\Ol_C)$,
consider the admissible modification $(\E_1,\E_{1,x},f)$ and the associated HN vector $\nu(\E_1,\E_{1,x},f)\in \Nc(G)$. Then Proposition \ref{P: HN and Newt} still holds in this setting by easily modifying the proof therein (using the semi-infinite orbit decomposition of $\Gr_{\mu^{-1}}$ instead of the Bruhat decomposition). In other words, we have
\[\nu(\E_1,\E_{1,x},f)\leq \nu(\E_{1,x})\] and
 \[\nu(\E_1,\E_{1,x},f)^\ast=w_0(-\nu(\E_1,\E_{1,x},f))\in  \Nc(G,\mu).\] Letting $C$ vary,
 we thus get a map $HN: |\Gr_{\mu^{-1}}|\lra \Nc(G,\mu)$.
 
\begin{theorem}\label{T: HN semi cont}
	For any $v\in \mathcal{N}(G,\mu)$, the subset  \[\Gr_{\mu^{-1}}^{HN= v}:=\{x\in |\Gr_{\mu^{-1}}|\,| HN(x)= v\}\] is locally closed, stable under the $G(\Q_p)$-action, and it defines a sub diamond of $\Gr_{\mu^{-1}}$. 
	Moreover, for the basic element $[b]\in B(G,\mu)$ with $v_0=\nu([b])$, the subset $\Gr_{\mu^{-1}}^{HN= v_0}$ is open, which is the semi-stable locus, and we have an inclusion \[\Gr_{\mu^{-1}}^{Newt=v_0}\subset \Gr_{\mu^{-1}}^{HN= v_0}.\]
\end{theorem}
\begin{proof}
	We generalize the arguments in the proof of Theorem \ref{T: property HN strata}.
Let $\Theta(G,\mu)$ be the set of pairs $(P, \lambda)$, where $P$ is a standard parabolic (here including $G$) of $G$ with associated standard Levi $M$, $\lambda\in X_\ast(T)_M^+$, such that the following conditions hold:
\begin{enumerate}
	\item $\lambda\in S_M(\mu^{-1})$, where $S_M(\mu^{-1})$ is the subset of $X_\ast(T)_M^+$ introduced in the last subsection;
	\item Let $v(\lambda)\in X_\ast(A_P)_\Q$ be the image of $\lambda$ under the natural projections $X_\ast(T)_\Q\ra X_\ast(A)_\Q\ra  X_\ast(A_P)_\Q$. Then
	\[\lan v(\lambda),\alpha\ran >0,\quad \forall\,\alpha\in \Delta_{0,P}.\]
\end{enumerate}
In particular, $\Theta(G,\mu)$  is a finite set. The inclusion $X_\ast(A_P)_\Q\subset X_\ast(A)_\Q$ induces a natural map
\[\Theta(G,\mu)\ra X_\ast(A)_{\Q}^+,\quad (P,\lambda)\mapsto v(\lambda).\]This map factors through $\Nc(G,\mu)$. The element $\theta_0=(G,\mu)$ maps to the basic element $v_0$ of $\Nc(G,\mu)$. For any $x\in \Gr_{\mu^{-1}}(C,\Ol_C)$, let $P$ be the standard parabolic determined by the HN vector $HN(x)^\ast=\nu(\E_1,\E_{1,x},f)$. Consider the Iwasawa decomposition relative to $P$ and $M$, and let $\lambda\in S_M(\mu^{-1})$ be the cocharacter such that $x\in \Gr_{\mu^{-1}}(C,\Ol_C)\cap S_\lambda(C,\Ol_C)$. By \cite{CS} Lemma 3.5.5, \[c_1^M(\E_{1_M,\pr_{\lambda}(x)})=\lambda^\sharp\in \pi_1(M)_\Gamma.\] Therefore \[\mu(\E_{1_M},\E_{1_M, \pr_{\lambda}(x)}, f_M)=\lambda^\sharp\otimes 1=v(\lambda)\in \pi_1(M)_{\Gamma,\Q}.\]
By Proposition \ref{P: semi-stable adm modif}, we have $v(\lambda)=\nu(\E_1,\E_{1,x},f)$ and $(P,\lambda)\in \Theta(G,\mu)$.  Thus we get
 a well defined map \[\Theta: |\Gr_{\mu^{-1}}|\ra \Theta(G,\mu). \]
It suffices to show that for any $\theta=(P,\lambda)\in \Theta(G,\mu)$, 
the subset  \[\Gr_{\mu^{-1}}^{\theta}:=\{x\in |\Gr_{\mu^{-1}}|\,| \Theta(x)=\theta\}\]  defines a sub diamond of $\Gr_{\mu^{-1}}$. In fact, we have \[\Gr_{\mu^{-1}}^{HN= v}=\coprod_{\theta=(P,\lambda),v(\lambda)^\ast=v}\Gr_{\mu^{-1}}^{\theta}.\]

Note that we have the equality $\Gr_{\mu^{-1}}^{HN= v_0}=\Gr_{\mu^{-1}}^{\theta_0}$ which is the semi-stable locus and we denote by $\Gr_{G,\mu^{-1}}^{ss}$. We first show that this is an open sub diamond of $\Gr_{\mu^{-1}}$, i.e. $|\Gr_{G,\mu^{-1}}|\setminus |\Gr_{G,\mu^{-1}}^{ss}|$ is closed. This follows from Corollary \ref{C: semi-stable adm modif} and Proposition \ref{P:closure semi-infinite}. Indeed, for any $x\in |\Gr_{G,\mu^{-1}}|\setminus |\Gr_{G,\mu^{-1}}^{ss}|$, by Corollary \ref{C: semi-stable adm modif}, there exits a maximal standard parabolic $P$ with standard Levi $M$, such that $\lan\mu(\E_{1,x,M}), \alpha\ran >0$, where $\Delta_{0,P}=\{\alpha\}$.  Then an easy argument shows that $P\supset P_{HN(x)}$.
Consider the construction of subsection \ref{subsection semi-infinite} with respect to $P$ and $M$. Let $\lambda\in S_{M}(\mu^{-1})$ be such that $x\in \Gr_{G,\mu^{-1}}\cap S_\lambda$, where $S_\lambda$ is the generalized semi-infinite orbit attached to $\lambda$. By Corollary \ref{C: semi-stable adm modif}, $\Gr_{G,\mu^{-1}}\cap S_\lambda \subset |\Gr_{G,\mu^{-1}}|\setminus |\Gr_{G,\mu^{-1}}^{ss}|$. By Proposition 
\ref{P:closure semi-infinite}, the closure of $\Gr_{G,\mu^{-1}}\cap S_\lambda$  inside $\Gr_{G,\mu^{-1}}$
 \[\ov{\Gr_{G,\mu^{-1}}\cap S_\lambda}\subset\coprod_{\lambda'\leq \lambda}\Gr_{G,\mu^{-1}}\cap S_{\lambda'}.\] Then by the last paragraph in the proof of Proposition \ref{P:closure semi-infinite}, for any $\lambda'\leq \lambda$, we have $\lan v(\lambda'), \alpha\ran >0$ for the parabolic $P$ with $\Delta_{0, P}=\{\alpha\}$, as the inequality holds for $\lambda$. By  Corollary \ref{C: semi-stable adm modif}, this implies \[\ov{\Gr_{G,\mu^{-1}}\cap S_\lambda}\subset |\Gr_{G,\mu^{-1}}|\setminus |\Gr_{G,\mu^{-1}}^{ss}|.\] Thus $|\Gr_{G,\mu^{-1}}|\setminus |\Gr_{G,\mu^{-1}}^{ss}|$ is closed.

For a general $\theta=(P,\lambda)\in \Theta(G,\mu)$, consider the stratification $\Gr_{\mu^{-1}}=\coprod_{\lambda'\in S_M(\mu^{-1})}\Gr_{\mu^{-1}}\cap S_{\lambda'}$ with respect to $P$ and the associated Levi $M$. Set \[\Gr_{P,\lambda}^{\theta}:=\Gr_{\mu^{-1}}^{\theta}\cap  S_\lambda.\] Then we have two inclusions \[ \Gr_{P,\lambda}^{\theta}\subset \Gr_{\mu^{-1}}\cap S_\lambda, \quad \Gr_{P,\lambda}^{\theta}\subset \Gr_{\mu^{-1}}^{\theta}.\] 
Consider the first inclusion $\Gr_{P,\lambda}^{\theta}\subset \Gr_{\mu^{-1}}\cap S_\lambda$.
By Proposition \ref{P: HN reduction}, $\Gr_{P, \lambda}^{\theta}$  is 
the preimage (fiber product) of the semi-stable locus $\Gr_{M,\lambda}^{ss}\subset \Gr_{M,\lambda}$ under the projection \[\pr_\lambda: \Gr_{\mu^{-1}}\cap  S_\lambda\ra \Gr_{M,\lambda}.\]
As we just proved that $\Gr_{M,\lambda}^{ss}$ is open in $\Gr_{M,\lambda}$, $\Gr_{P,\lambda}^{\theta}$ is a locally closed and locally spatial sub diamond of $\Gr_{\mu^{-1}}$.
Now we look at $\Gr_{P,\lambda}^{\theta}\subset \Gr_{\mu^{-1}}^{\theta}$.
Note that the natural action of $G(\Q_p)$ on $\Gr_{\mu^{-1}}$ restricts to an action of $G(\Q_p)$ (resp. $P(\Q_p)$) on $\Gr_{\mu^{-1}}^{\theta}$ (resp. $\Gr_{P,\lambda}^{\theta}$). Then
by construction, as subsets of $|\Gr_{\mu^{-1}}|$ we have \[\Gr_{\mu^{-1}}^{\theta}=\bigcup_{g\in G(\Q_p)/P(\Q_p)}g\Gr_{P,\lambda}^{\theta}.\] We claim that $\Gr_{\mu^{-1}}^{\theta}$ is  locally closed in $\Gr_{\mu^{-1}}$. Indeed, as \[\bigcup_{g\in G(\Q_p)/P(\Q_p)}g\Gr_{P,\lambda}^{\theta}\subset \bigcup_{g\in G(\Q_p)/P(\Q_p)}g(\Gr_{\mu^{-1}}\cap  S_\lambda)\] is open, it suffices to show the later is locally closed in $\Gr_{\mu^{-1}}$. Then we are further reduced to show that $\bigcup_{g\in G(\Q_p)/P(\Q_p)}(\Gr_{\mu^{-1}}\cap  S_{\leq g\lambda g^{-1}})$ is closed in $\Gr_{\mu^{-1}}$, where $S_{\leq g\lambda g^{-1}}$ is the closed semi-infinite orbit associated to the parabolic $gPg^{-1}$ and $g\lambda g^{-1}$. Now, the closedness of the last union follows from the fact that $G(\Q_p)/P(\Q_p)$ is compact and $S_{\leq g\lambda g^{-1}}$ is closed by Proposition \ref{P:closure semi-infinite}.  Moreover, as all subsets are generalizing, $\Gr_{\mu^{-1}}^{\theta}$ defines a locally spatial sub $v$-sheaf of $\Gr_{\mu^{-1}}$.
Then, we get an isomorphism of locally spatial  $v$-sheaves
 \[\Gr_{P,\lambda}^{\theta}\times^{\ul{P(\Q_p)}}\ul{G(\Q_p)}\st{\sim}{\lra} \Gr_{\mu^{-1}}^{\theta}.\] 
 Indeed, the left hand defines a priori a locally spatial sub $v$-sheaf $\Gr_{P,\lambda}^{\theta}\times^{\ul{P(\Q_p)}}\ul{G(\Q_p)}\subset \Gr_{\mu^{-1}}^{\theta}$. The inclusion map is quasicompact, since the locally closed subspace of $|\Gr_{\mu^{-1}}^{\theta}|$ is the whole $|\Gr_{P,\lambda}^{\theta}\times^{\ul{P(\Q_p)}}\ul{G(\Q_p)}|=\bigcup_{g\in G(\Q_p)/P(\Q_p)}g|\Gr_{P,\lambda}^{\theta}|=|\Gr_{\mu^{-1}}^{\theta}|$. By \cite{S} Lemma 12.11 and Proposition 12.15, this inclusion is in fact an isomorphism of $v$-sheaves.

Finally, the inclusion $\Gr_{\mu^{-1}}^{Newt=v_0}\subset \Gr_{\mu^{-1}}^{HN= v_0}$ comes from the inequality
$\nu(\E_1, \E_{1,x}, f)\leq \nu(\E_{1,x})$ as in Proposition \ref{P: HN and Newt} (1) (which holds for general $\mu$).

\end{proof}
\begin{remark}
	\begin{enumerate}
		\item If $\mu$ is minuscule, by the proofs of Theorem \ref{T: property HN strata} and Theorem \ref{T: HN semi cont}, the HN type stratifications of $\Gr_{\mu^{-1}}$ and $\Fl(G,\mu^{-1})$ coincide via the Bialynicki-Birula isomorphism $\Gr_{\mu^{-1}}\st{\sim}{\ra} \Fl(G,\mu^{-1})^\Diamond$.
		\item One can actually show that the complement of the semi-stable locus $\Gr_{\mu^{-1}}\setminus \Gr_{\mu^{-1}}^{ss}$ is a profinite union of closed subspaces of the form $\ov{\Gr_{\mu^{-1}}\cap S_\lambda}$, where $S_\lambda$ is a generalized semi-infinite orbit with respect to some proper parabolic $P$ of $G$. We leave this to the interested reader.
	\end{enumerate}
\end{remark}
In the following, we will identify $\mathcal{N}(G,\mu)$ with $B(G,\mu)$ by the Newton map.
We have the following stratification of diamonds over $\breve{E}$ (which is in fact defined over $E$):
\[\Gr_{\mu^{-1}}=\coprod_{[b']\in B(G,\mu)}\Gr_{\mu^{-1}}^{HN=[b']}.\]
For any $[b']\in B(G,\mu)$, the stratum $\Gr_{\mu^{-1}}^{HN=[b']}$ is stable under the action of $G(\Q_p)$ on $\Gr_{\mu^{-1}}$.
The open Harder-Narasimhan stratum $\Gr_{\mu^{-1}}^{HN=[b]}$ is associated to the basic element $[b]\in B(G,\mu)$.
By the proof of Theorem \ref{T: HN semi cont},
we get
\begin{corollary}\label{C: property HN strata general}
For any non basic $[b']\in B(G,\mu)$, the stratum $\Gr_{\mu^{-1}}^{HN=[b']}$ is a parabolic induction.
\end{corollary}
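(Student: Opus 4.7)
The strategy is to lift the finer stratification by Harder--Narasimhan types from $\Fl(G,\mu^{-1})$ to $\Gr_{\mu^{-1}}$ via the Bialynicki--Birula map. The key starting point is the observation (already made just before the corollary) that the map $HN\colon |\Gr_{\mu^{-1}}|\to \mathcal{N}(G,\mu)$ factors through the corresponding map on $|\Fl(G,\mu^{-1})|$ via $\pi_{\mu^{-1}}$, so the Harder--Narasimhan stratification on $\Gr_{\mu^{-1}}$ is \emph{literally} the pullback of the Harder--Narasimhan stratification on $\Fl(G,\mu^{-1})^\Diamond$. Moreover, $\pi_{\mu^{-1}}$ is $G(\Q_p)$-equivariant, which will let us transport the parabolic induction structure.

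First, I would base-change to a finite extension $\wt{E}/E$ splitting $G$ and pull back the refinement $\Fl(G,\mu^{-1}) = \coprod_{\theta\in\Theta(G,\mu)} \Fl(G,\mu^{-1})^\theta$ from the proof of Theorem \ref{T: property HN strata}, setting
\[ \Gr_{\mu^{-1}}^\theta := \pi_{\mu^{-1}}^{-1}\bigl(\Fl(G,\mu^{-1})^{\theta,\Diamond}\bigr), \qquad \Gr_{\mu^{-1}}^{HN=v} = \coprod_{\theta:\,H(\theta)=v}\Gr_{\mu^{-1}}^\theta. \]
For each HN type $\theta = (P,\nu_P)$ with Levi $M\subset P$, the Schubert cell $\Fl(P,\nu_P) = PwP_{\mu^{-1}}/P_{\mu^{-1}}$ and its Levi projection $\Fl(P,\nu_P)\to\Fl(M,\nu_M)$ should lift to the $B_{dR}^+$-side: using the $B_{dR}^+$-affine Grassmannians of $P$ and $M$ (which are ind-diamonds by Proposition \ref{P:general aff Grass}) and the natural diagram relating them to $\Gr_G$, one obtains a subspace $\Gr(P,\nu_P)\subset \Gr_{\mu^{-1}}$ with a projection $\Gr(P,\nu_P)\to \Gr(M,\nu_M)$ over the Bialynicki--Birula projection $\Fl(P,\nu_P)\to\Fl(M,\nu_M)$.

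Next, combining the affine bundle structure of $\Fl(P,\nu_P)\to\Fl(M,\nu_M)$ with the fact that $\pi_\mu$ and $\pi_{\mu^{-1}}$ are fibrations in diamonds associated to affine spaces yields an analogous fibration $\Gr(P,\nu_P)^\theta \to \Gr(M,\nu_M)^{ss}$, where $\Gr(P,\nu_P)^\theta := \Gr_{\mu^{-1}}^\theta\cap\Gr(P,\nu_P)$ and $\Gr(M,\nu_M)^{ss}$ is the preimage of the open semi-stable locus $\Fl(M,\nu_M)^{ss}$. The $G(\Q_p)$-action on $\Gr_{\mu^{-1}}$ restricts to a $P(\Q_p)$-action on $\Gr(P,\nu_P)^\theta$ (compatibly with the one on $\Fl(P,\nu_P)^\theta$), and the equivariance of $\pi_{\mu^{-1}}$ together with the homeomorphism $\Fl(P,\nu_P)^\theta\times_{P(\Q_p)}G(\Q_p)\xrightarrow{\sim}\Fl(G,\mu^{-1})^\theta$ delivers
\[ \Gr(P,\nu_P)^\theta\times_{P(\Q_p)}G(\Q_p)\xrightarrow{\sim}\Gr_{\mu^{-1}}^\theta. \]
Taking the union over the finitely many $\theta$ with $H(\theta)=v$ (where $v\in\mathcal{N}(G,\mu)$ is the Newton vector of the non-basic class $[b']$) then expresses $\Gr_{\mu^{-1}}^{HN=[b']}$ as a parabolic induction.

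The main obstacle is to execute cleanly the lifting of the Schubert cell and affine fibration structure through $\pi_{\mu^{-1}}$ in the non-minuscule case, where the Bialynicki--Birula map is no longer an isomorphism. In the minuscule case the pullback argument is essentially tautological, but in general one has to exploit the compatibility between the $B_{dR}^+$-affine Grassmannians of $G$, $P$, and $M$ (the theory developed in subsection \ref{subsection FF general}) to produce the requisite diagram of diamonds realizing $\Gr(P,\nu_P)^\theta\to\Gr(M,\nu_M)^{ss}$ as a fibration. Once this geometric input is in place, the rest of the argument is a formal transcription of the proof of Theorem \ref{T: property HN strata}.
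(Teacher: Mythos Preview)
Your approach is correct in its core idea---pull back the Harder--Narasimhan stratification (and its $\theta$-refinement) from $\Fl(G,\mu^{-1})$ via the $G(\Q_p)$-equivariant Bialynicki--Birula map $\pi_{\mu^{-1}}$---and this is exactly what the paper does. However, you make the second half of the argument harder than it needs to be, and the ``main obstacle'' you flag is not actually there.

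The paper's proof is two sentences: Theorem~\ref{T: property HN strata} holds for $\Fl(G,\mu^{-1})$ with general $\mu$ (this is just \cite{DOR} Theorem~6.3.5), and one pulls back under $\pi_{\mu^{-1}}$. The point is that parabolic induction is a purely $G(\Q_p)$-equivariant structure, so it transports formally along any $G(\Q_p)$-equivariant map. Concretely, if $\Fl(P,\nu_P)^\theta \times_{P(\Q_p)} G(\Q_p) \xrightarrow{\sim} \Fl(G,\mu^{-1})^\theta$ via the action map, then setting $Y := \pi_{\mu^{-1}}^{-1}\bigl(\Fl(P,\nu_P)^{\theta,\Diamond}\bigr)$ (a $P(\Q_p)$-stable subspace of $\Gr_{\mu^{-1}}^\theta$), the equivariance of $\pi_{\mu^{-1}}$ gives $Y \times_{P(\Q_p)} G(\Q_p) \xrightarrow{\sim} \Gr_{\mu^{-1}}^\theta$: surjectivity is because $G(\Q_p)\cdot \Fl(P,\nu_P)^\theta = \Fl(G,\mu^{-1})^\theta$ downstairs, and injectivity is because if $y, g y \in Y$ then $\pi_{\mu^{-1}}(y), g\,\pi_{\mu^{-1}}(y) \in \Fl(P,\nu_P)^\theta$, forcing $g\in P(\Q_p)$.

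You do not need to introduce $\Gr_P$, $\Gr_M$, or the semi-infinite orbit machinery of subsection~\ref{subsection FF general}; nor do you need to lift the affine fibration $\Fl(P,\nu_P)\to\Fl(M,\nu_M)$ to the affine Grassmannian side. All that matters for the stated corollary is the $G(\Q_p)$-equivariance of $\pi_{\mu^{-1}}$ together with the already-established structure on the flag variety. Your route would work too, but it imports difficulties that the problem does not require.
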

We have also the following generalization of Proposition \ref{P: Newt HN ordinary}, as the argument in the proof there applies:
\begin{corollary}
Let $[b_1]\in B(G,\mu)$ be the unique maximal element. Assume that $G$ is quasi-split, then \[\Gr_{\mu^{-1}}^{HN=[b_1]}=\Gr_{\mu^{-1}}^{Newt=[b_1]}.\]
\end{corollary}

\subsection{Newton and Harder-Narasimhan stratifications on $\Gr_{\mu}$}\label{subsection Newt HN de Rham}
Let $[b]\in B(G,\mu)$ be the \emph{basic} element.
Now we study the geometry of $\Gr_{\mu}$ using modifications of the $G$-bundle $\E_b$.  Consider the map $\Gr_\mu\ra \Bun_{G,\ov{\F}_p}$ constructed in subsection \ref{subsection Hecke stack}. Let $C|\breve{E}$ be an algebraically closed perfectoid field.
The induced map on the sets of $C$-valued points can be described  in more concrete terms.
For any $x\in \Gr_{\mu}(C,\Ol_C)$, we have the modification \[\E_{b,x}\] of $\E_b$. The isomorphism class of $\E_{b,x}$ defines a point $b(\E_{b,x})\in B(G)$. 
We write $Newt: \Gr_{\mu}(C,\Ol_C)\ra B(G)$ for the map. Recall the subset $B(G,0,\nu_{b}\mu^{-1})\subset B(G)$ introduced in subsection \ref{subsection twin towers}.
\begin{proposition}
	The image of the induced map
	$Newt: \Gr_{\mu}(C,\Ol_C)\ra B(G)$ is $B(G,0,\nu_{b}\mu^{-1})$.
\end{proposition}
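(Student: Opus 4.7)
The plan is to reduce the statement to Proposition \ref{P: image Tate Newton} via the twin towers principle of Section \ref{section twin towers}. Recall that the equivalence of groupoids $\Bun_G \simeq \Bun_{J_b}$, induced by $\underline{\mathrm{Aut}}(\E_b) = J_b \times X$, sends $\E_b^G$ to $\E_1^{J_b}$ and respects modifications of any given type $\mu$: the Hecke stacks $\mathrm{Hecke}_G^\mu$ and $\mathrm{Hecke}_{J_b}^\mu$ are identified under the isomorphism $J_{b,\breve{\Q}_p} \simeq G_{\breve{\Q}_p}$, whence $\Gr_\mu$ viewed for $G$ matches $\Gr_\mu$ viewed for $J_b$. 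Under this identification, for each $x \in \Gr_\mu(C,\Ol_C)$, the modification $\E_{b,x}^G$ of $\E_b^G$ corresponds to the modification $\E_{1,x}^{J_b}$ of $\E_1^{J_b}$, and the classes $b(\E_{b,x}^G) \in B(G)$ and $b(\E_{1,x}^{J_b}) \in B(J_b)$ correspond under the bijection $B(J_b) \xrightarrow{\sim} B(G)$ of subsection \ref{subsection twin towers}.

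Having established this, I would apply Proposition \ref{P: image Tate Newton} to the pair $(J_b, \{\mu^{-1}\})$. Since $(\mu^{-1})^{-1} = \mu$, the Grassmannian appearing in that proposition is precisely $\Gr_\mu$, and its conclusion identifies the image of the corresponding map $Newt \colon \Gr_\mu(C,\Ol_C) \to B(J_b)$ (now computed by modifications of $\E_1^{J_b}$) with the Kottwitz set $B(J_b, \mu^{-1})$. Combined with the twin towers identification above, the image of the $Newt$ map of our proposition equals the image of $B(J_b, \mu^{-1})$ under the bijection $B(J_b) \xrightarrow{\sim} B(G)$, and by Lemma \ref{L: bijections Kottwitz sets} this image is precisely $B(G, 0, \nu_b \mu^{-1})$, as desired.

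The main subtlety to verify is the compatibility of the twin towers equivalence with modifications of the \emph{same} type $\mu$ on both sides (as opposed to $\mu^{-1}$ or $w_0(-\mu)$). This amounts to the identification of the Hecke stacks under the inner twist by $\E_b$, which is implicit in the twin moduli description $\M(G,\mu,b)_\infty \simeq \M(J_b,\mu^{-1},b^{-1})_\infty$ of subsection \ref{subsection twin loc shimura}. A direct alternative, bypassing the twin towers, would be to mirror the proof of Proposition \ref{P: image Tate Newton} verbatim: the containment in $B(G,0,\nu_b\mu^{-1})$ follows from Mazur-type inequalities for the Kottwitz and Newton invariants under a type-$\mu$ modification of $\E_b$ (giving $\kappa(b(\E_{b,x})) = \kappa([b]) - \mu^\sharp = 0$ and $\nu(b(\E_{b,x})) \leq \nu([b])\omega_0(-\mu^\diamond)$ via \cite{CS} Proposition 3.5.3 or its analogue for modifications of non-trivial bundles), while surjectivity follows by introducing the admissible locus $\Gr_\mu^a \subset \Gr_\mu$ attached to $(G,\{\mu\},[b])$ (cf. subsection \ref{subsection Newt HN de Rham}) and combining the Colmez-Fontaine theorem with the non-emptiness of the weakly admissible locus $\Fl(G,\mu,b)^{wa}$ from \cite{RV} Proposition 3.1.
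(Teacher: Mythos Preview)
Your proposal is correct. The twin towers reduction you describe is precisely the content of ``the arguments in \cite{CFS} section 5'' that the paper invokes: CFS Proposition 5.3 is exactly the identification of Newton strata on $\Gr_{G,\mu}$ (modifications of $\E_b^G$) with those on $\Gr_{J_b,\mu}$ (modifications of $\E_1^{J_b}$), which then reduces the question to Proposition \ref{P: image Tate Newton} for $(J_b,\{\mu^{-1}\})$ via Lemma \ref{L: bijections Kottwitz sets}. Your explicit unpacking of this is clearer than the paper's one-line reference, and your ``direct alternative'' paragraph also correctly sketches the self-contained route the paper alludes to with ``see also the proof of Proposition \ref{P: image Tate Newton}''. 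There is no circularity: the twin towers identification of modifications (subsection \ref{subsection twin towers}) and Lemma \ref{L: bijections Kottwitz sets} are established independently of this proposition, and Theorem \ref{T: duality general} (which you implicitly use) does not depend on knowing the image of the Newton map---only on the pointwise compatibility $b(\E_{b,x}^G)\leftrightarrow b(\E_{1,x}^{J_b})$ under $B(J_b)\simeq B(G)$.
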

\begin{proof}
	For $\mu$ minuscule, this has been studied in \cite{CFS} section 5 (see also \cite{R} A.10).
	The arguments in \cite{CFS} section 5 work in the general case. See also the proof of Proposition \ref{P: image Tate Newton}.
\end{proof}
Letting $C$ vary, we thus get a map 
$Newt: |\Gr_{\mu}|\lra B(G,0,\nu_{b}\mu^{-1})$,
which is upper semi-continuous by \cite{KL, SW}.
Thus we have the Newton stratification of diamonds over $\breve{E}$:
\[\Gr_{\mu}=\coprod_{[b']\in B(G,0,\nu_{b}\mu^{-1})}\Gr_{\mu}^{Newt=[b']}. \]
Note that $J_b(\Q_p)$ acts on $\Gr_{\mu}$ via the inclusions $J_b(\Q_p)\subset G(\breve{\Q}_p)\subset G(B_{dR}^+(\C_p))$.
For any $[b']\in B(G,0,\nu_{b}\mu^{-1})$, the stratum $\Gr_{\mu}^{Newt=[b']}$ is stable under the action of $J_{b}(\Q_p)=\mathrm{Aut}(\E_b)$ on $\Gr_{\mu}$. The open Newton stratum $\Gr_{\mu}^{Newt=[1]}$ corresponds to the trivial element $[1]\in B(G,0,\nu_{b}\mu^{-1})$, which we will also denote by $\Gr_\mu^a$ (the admissible locus inside $\Gr_\mu$ with respective to $(G,\{\mu\},[b])$, which we already introduced in the proof of Proposition \ref{P: image Tate Newton}).
\begin{remark}\label{R:general adm and wa}
	In this subsection, to define the Newton stratification, in fact we don't need the assumption that $[b]$ is basic. However, we don't know a description of the index set for a non basic $[b]$. 
\end{remark}

Now we want to define a Harder-Narasimhan stratification on $\Gr_{\mu}$. Consider the triple $(J_b, \{\mu\}, [1])$. Then we can consider the Newton and Harder-Narasimhan stratifications on $\Gr_{J_b,\mu}$ as in the last subsection. Since $[b]$ is basic, the isomorphism $J_{b,\breve{\Q}_p}\st{\sim}{\ra}G_{\breve{\Q}_p}$ induces identifications \[\Gr_{G,\mu}\cong\Gr_{J_b,\mu}  \] as diamonds over $\Spd\,\breve{E}$. The  Harder-Narasimhan stratification on $\Gr_{J_b,\mu}$  induces a $J_b(\Q_p)$-equivariant stratification on $\Gr_{\mu}$ over $\breve{E}$:
\[\Gr_{\mu}=\coprod_{[b']\in B(G,0,\nu_b\mu^{-1})}\Gr_{\mu}^{HN=[b']},\]
which we call the Harder-Narasimhan stratification. The open Harder-Narasimhan stratum $\Gr_{\mu}^{HN=[1]}$ corresponds to the trivial element $[1]\in B(G,0,\nu_{b}\mu^{-1})$, which we will also denote by $\Gr_\mu^{wa}$ (the weakly admissible locus inside $\Gr_\mu$ with respective to $(G,\{\mu\},[b])$). 

Consider also the dual local Shtuka datum $(J_b,\{\mu^{-1}\},[b^{-1}])$.
The results of subsection \ref{subsection twin loc shimura} still hold (cf. \cite{SW} subsection 23.3). 
Now the following generalization\footnote{The part on Harder-Narasimhan stratifications is by our construction.} of
 Theorem \ref{T: duality HN Newt} and Corollary \ref{C: duality HN Newt} is clear:
 \begin{theorem}\label{T: duality general}
 \begin{enumerate}
 	\item Under the identification $\Gr_{G,\mu^{-1}}=\Gr_{J_b,\mu^{-1}}$, 
 	for any $[b']\in B(G,\mu)$ corresponding to $[b'']\in B(J_b,0,\nu_{b^{-1}}\mu)$ under the bijection (cf. Lemma \ref{L: bijections Kottwitz sets}) \[B(G,\mu)\st{\sim}{\lra} B(J_b,0,\nu_{b^{-1}}\mu),\] we have
 	\begin{enumerate}
 		\item 
 		$ \Gr_{G,\mu^{-1}}^{HN=[b']}=\Gr_{J_b,\mu^{-1}}^{HN=[b'']}.$
 		\item $ \Gr_{G,\mu^{-1}}^{Newt=[b']}=\Gr_{J_b,\mu^{-1}}^{Newt=[b'']}.$
 	\end{enumerate}
 	\item Under the identification $\Gr_{J_b,\mu}=\Gr_{G,\mu}$,
 	for any $[b']\in B(G,\mu)$ corresponding to $[b'']\in B(J_b,0,\nu_{b^{-1}}\mu)$ under the bijection (cf. Lemma \ref{L: bijections Kottwitz sets})  \[B(J_b,\mu^{-1})\st{\sim}{\lra} B(G,0,\nu_{b}\mu^{-1}),\] we have
 	\begin{enumerate}
 		\item
 		$\Gr_{J_b,\mu}^{HN=[b']}=\Gr_{G,\mu}^{HN=[b'']}.$
 		\item $\Gr_{J_b,\mu}^{Newt=[b']}=\Gr_{G,\mu}^{Newt=[b'']}.$
 	\end{enumerate}
 \end{enumerate}
 \end{theorem}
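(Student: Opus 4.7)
The plan is to deduce Theorem \ref{T: duality general} from the minuscule case (Theorem \ref{T: duality HN Newt} and Corollary \ref{C: duality HN Newt}), combined with the pullback structure of the Harder-Narasimhan stratifications established in section \ref{section non minuscule}. The two assertions (1) and (2) are symmetric under exchanging $(G,\{\mu\},[b])$ with its dual local Shtuka datum $(J_b,\{\mu^{-1}\},[b^{-1}])$, so it suffices to prove (1); assertion (2) then follows by applying (1) to the dual datum, noting that $J_{b^{-1}}=G$ as explained in subsection \ref{subsection twin towers}.

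For the Newton strata equality (1b), I would argue exactly as in Lemma \ref{L: dual open Newton}, which did not use the minuscule hypothesis. Let $C|\breve{E}$ be an algebraically closed perfectoid field and $x\in \Gr_{G,\mu^{-1}}(C,\Ol_C)=\Gr_{J_b,\mu^{-1}}(C,\Ol_C)$. By the twin towers principle, the equivalence $\Bun_{J_b}\cong \Bun_G$ given by $\E\mapsto \underline{\text{Isom}}(\E_{b^{-1}}^{J_b},\E)$ preserves modifications of a given type at $\infty$, so it identifies the modification $\E_{1,x}^G$ of the trivial $G$-bundle with the modification $\E_{b^{-1},x}^{J_b}$ of $\E_{b^{-1}}^{J_b}$. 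Taking isomorphism classes gives the commutative compatibility
\[ b(\E_{1,x}^G) \;\longleftrightarrow\; b(\E_{b^{-1},x}^{J_b}) \]
under the bijection $B(G)\st{\sim}{\ra}B(J_b)$ of subsection \ref{subsection twin towers}, and (1b) follows upon tracing through the bijection $B(G,\mu)\st{\sim}{\ra}B(J_b,0,\nu_{b^{-1}}\mu)$ of Lemma \ref{L: bijections Kottwitz sets}.

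For the Harder-Narasimhan strata equality (1a), the key observation is that both HN stratifications in question are pullbacks from the corresponding flag variety via the Bialynicki-Birula map. On the Hodge-Tate side for $G$, this is exactly the content of the proof of Theorem \ref{T: HN semi cont}: the map $HN: |\Gr_{G,\mu^{-1}}|\to \Nc(G,\mu)$ factors through $HN: |\Fl(G,\mu^{-1})|\to \Nc(G,\mu)$ via $\pi_{\mu^{-1}}$. On the de Rham side for $J_b$, the HN stratification on $\Gr_{J_b,\mu^{-1}}$ is \emph{defined} as the pullback of the HN stratification on $\Fl(J_b,\mu^{-1})$ via $\pi_{\mu^{-1}}$ (see subsection \ref{subsection Newt HN de Rham}). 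Since the Bialynicki-Birula map depends only on the conjugacy class $\{\mu^{-1}\}$ over $\ov{\Q}_p$ and is intrinsic to the cocharacter, it is compatible with the identification $\Fl(G,\mu^{-1})=\Fl(J_b,\mu^{-1})$ coming from the isomorphism $J_{b,\breve{\Q}_p}\simeq G_{\breve{\Q}_p}$. Hence (1a) follows directly from Theorem \ref{T: duality HN Newt} (1) by pullback.

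I do not expect any serious obstacle: the non-trivial groupoid-level content lies in the twin towers principle (handled for Newton data by the functoriality of $\underline{\text{Isom}}$) and in the HN formalism on flag varieties (handled in the minuscule case by \cite{DOR} and section \ref{section twin towers}). The only point to verify carefully is the compatibility of the Bialynicki-Birula map with inner twisting by $b$ over $\breve{\Q}_p$, which is purely formal from the moduli description of $\pi_\mu$ given in subsection \ref{subsection affine schubert}. Once these compatibilities are spelled out, the two equalities in (1) reduce respectively to a point-set computation via the equivalence $\Bun_{J_b}\cong \Bun_G$ and to the minuscule statements already proved in section \ref{section twin towers}.
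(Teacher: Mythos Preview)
Your proposal is correct and matches the paper's implicit argument: the paper simply declares the theorem ``clear'' from the setup, precisely because the Newton equalities follow verbatim from the twin towers principle (as in Lemma \ref{L: dual open Newton} and Theorem \ref{T: duality HN Newt}(2), which nowhere use minusculeness), while the Harder--Narasimhan equalities reduce to the flag-variety case of Theorem \ref{T: duality HN Newt}(1) via the Bialynicki--Birula pullback, exactly as you spell out.
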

By Theorem \ref{T: HN semi cont} and the construction, we have $\Gr_\mu^a\subset \Gr_\mu^{wa}$. 
\subsection{Extensions to $\Gr_{\leq\mu}$ and $\Gr_{\leq{\mu^{-1} }}$}
We can extend the above constructions to $\Gr_{\leq\mu}$ and $\Gr_{\leq{\mu^{-1} }}$. First, we note the following lemma.
\begin{lemma}
	For $\mu_1,\mu_2\in X_\ast(T)^+$ with $w_0(-\mu_1)\leq w_0(-\mu_2)$, we have a natural injection $B(G,\mu_1)\hookrightarrow B(G,\mu_2)$.
\end{lemma}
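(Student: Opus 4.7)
The plan is to translate the hypothesis into the familiar dominance order on $\mu_1,\mu_2$ and then to verify that the obvious set-theoretic inclusion $B(G,\mu_1)\subseteq B(G,\mu_2)$ in $B(G)$ already holds; this inclusion will be the desired natural injection.

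First I will unpack the hypothesis. Recall that the partial order $\leq$ on $X_\ast(T)$ in this paper is defined via non-negative \emph{integer} combinations of simple positive coroots. Since $w_0$ is an involution sending $\Delta^\vee$ bijectively to $-\Delta^\vee$, the computation
\[ w_0(-\mu_2)-w_0(-\mu_1) \;=\; w_0(\mu_1-\mu_2) \]
shows that $w_0(-\mu_1)\leq w_0(-\mu_2)$ is equivalent to $w_0(\mu_1-\mu_2)\in \Z_{\geq 0}\Delta^\vee$, hence to $\mu_1-\mu_2\in -\Z_{\geq 0}\Delta^\vee$, i.e.\ to $\mu_1\leq \mu_2$ in $X_\ast(T)^+$.

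Next I will extract the two consequences needed. The inclusion $\Z_{\geq 0}\Delta^\vee\subseteq\lan\Phi^\vee\ran_\Z$ forces the Kottwitz invariants to coincide: $\mu_1^\sharp = \mu_2^\sharp$ in $\pi_1(G)_\Gamma$. On the Newton side, the $\ast$-action of $\Gamma$ on $X_\ast(T)^+$ permutes $\Delta^\vee$ and therefore preserves the dominance order, so $\sigma\ast\mu_1\leq \sigma\ast\mu_2$ for every $\sigma\in\Gamma$; averaging over a finite quotient of $\Gamma$ stabilizing both cocharacters then yields $\mu_1^\diamond\leq\mu_2^\diamond$ in $\Nc(G)$.

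Combining these, for any $[b]\in B(G,\mu_1)$ one has $\nu([b])\leq\mu_1^\diamond\leq\mu_2^\diamond$ and $\kappa([b])=\mu_1^\sharp=\mu_2^\sharp$, so $[b]\in B(G,\mu_2)$. This proves that the tautological inclusion $B(G,\mu_1)\hookrightarrow B(G,\mu_2)$ is well-defined and provides the natural injection. I do not anticipate any essential obstacle: once the hypothesis has been rephrased as $\mu_1\leq\mu_2$, everything reduces to a direct bookkeeping check with the definitions of $B(G,\mu)$, the Newton map $\nu$ and the Kottwitz map $\kappa$.
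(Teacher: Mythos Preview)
Your proof is correct. In fact it is more transparent than the paper's own argument: you observe directly that $\mu_1\leq\mu_2$ forces $\mu_2-\mu_1\in\lan\Phi^\vee\ran$ and hence $\mu_1^\sharp=\mu_2^\sharp$ in $\pi_1(G)_\Gamma$, so the injection is literally the set-theoretic inclusion $B(G,\mu_1)\subset B(G,\mu_2)$ inside $B(G)$. The paper instead invokes the injectivity of $(\kappa,\nu):B(G)\hookrightarrow\pi_1(G)_\Gamma\times\Nc(G)$ and, for $[b]\in B(G,\mu_1)$, constructs an element $[b']$ realizing the pair $(\mu_2^\sharp,\nu([b]))$; but since $\mu_1^\sharp=\mu_2^\sharp$ this pair is already $(\kappa([b]),\nu([b]))$, so $[b']=[b]$ and the detour is unnecessary. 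Your route avoids the somewhat informal existence step in the paper (``since $\kappa$ is surjective and $\mu_2^\sharp\equiv\nu([b])$'') and makes explicit that the map preserves Newton and Kottwitz invariants on the nose.
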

\begin{proof}
The assumption $w_0(-\mu_1)\leq w_0(-\mu_2)$ implies that $\mu_1\leq \mu_2$ and thus $\mu_1^\diamond\leq \mu_2^\diamond$. Recall that by \cite{Kot2} 4.13, \[(\kappa,\nu): B(G)\rightarrow \pi_1(G)_\Gamma\times \Nc(G)\] is injective. For $[b]\in B(G,\mu_1)$,  consider the pair $(\mu_2^\sharp, \nu([b])\in \pi_1(G)_\Gamma\times \Nc(G)$. It comes from a unique element $[b']\in B(G)$ under the injection $(\kappa,\nu): B(G)\hookrightarrow \pi_1(G)_\Gamma\times \Nc(G)$, since $\kappa$ is surjective and $\mu_2^\sharp\equiv \nu([b])$ in $\pi_1(G)_{\Gamma,\Q}$. Then since $\nu([b'])=\nu([b])\leq\mu_1^\diamond\leq \mu_2^\diamond$,  by definition $[b']\in B(G,\mu_2)$. In this way we get an injection $B(G,\mu_1)\hookrightarrow B(G,\mu_2)$.
\end{proof}
By the above lemma, we can define Newton and Harder-Narasimhan stratifications on $\Gr_{\leq\mu^{-1}}$ by modifications of the trivial $G$-bundle $\E_1$, with both of the index sets as $B(G,\mu)$. These strata will be the union over all $(\mu')^{-1}\leq \mu^{-1}$ of the corresponding strata (could be empty) inside $\Gr_{(\mu')^{-1}}$. Similar, for $[b]\in B(G,\mu)$ basic, we can define Newton and Harder-Narasimhan stratifications on $\Gr_{\leq\mu}$ by modifications of the $G$-bundle $\E_b$, with both of the index sets as $B(G,0,\nu_b\mu^{-1})$. The strata will be the union over all $\mu'\leq \mu$ of the corresponding strata (which could be empty) inside $\Gr_{\mu'}$. We will use the version of moduli of local $G$-Shtukas $\Sht(G,\leq\mu,b)_\infty$ in this setting. We can consider the dual local Shtuka datum $(J_b,\{\mu^{-1}\},[b^{-1}])$. Then the constructions and results in subsections \ref{subsection Newt HN Hodge-Tate} and \ref{subsection Newt HN de Rham}, in particular Theorem \ref{T: duality general}, can be generalized to the current setting.
We leave the details to the interested reader.

\subsection{Fargues-Rapoport conjecture for general $\mu$}\label{subsection FF general}
In the following we explain how to generalize the arguments in the proof of \cite{CFS} Theorem 6.1 to the non minuscule case.

Let $\mu\in X_\ast(T)^+$ and consider the diamond $\Gr_{G,\mu}$ over $\breve{E}$. Applying Proposition \ref{P: semi-stable adm modif},
we deduce the following generalization of \cite{CFS} Proposition 2.7 in non minuscule case (but for $[b]$ basic). 
\begin{proposition}\label{P:weakly adm general}
	Assume that $G$ is quasi-split and $[b]\in B(G,\mu)$ basic. Then $x\in \Gr_{G,\mu}(C,\Ol_C)$ is weakly admissible if and only if for any standard parabolic $P$ with associated standard Levi $M$, any reduction $b_M$ of $b$ to $M$, and any $\chi\in X^\ast(P/Z_G)^+$, we have
	\[\deg\chi_\ast(\E_{b,x})_P\leq 0,\]
	where $(\E_{b,x})_P$ is the reduction to $P$ of $\E_{b,x}$ induced by the reduction $\E_{b_P}$ of $\E_b$ as above.
\end{proposition}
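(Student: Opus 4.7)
The plan is to adapt the strategy of \cite{CFS} Proposition 2.7 (the minuscule case) by combining the $S_M(\mu)$-stratification of subsection \ref{subsection FF general}, Lemma \ref{L:modif type}, and the classical Tannakian characterization of weak admissibility for filtered $F$-isocrystals.

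First I would unwind the definition: by construction $\Gr_\mu^{wa} = \pi_\mu^{-1}(\Fl(G,\mu,b)^{wa,\Diamond})$, so weak admissibility of $x \in \Gr_\mu(C,\Ol_C)$ is equivalent to weak admissibility of the flag point $\pi_\mu(x) \in \Fl(G,\mu)(C)$. The latter is controlled by the filtered $F$-isocrystal with $G$-structure $(N_b,\Fc_{\pi_\mu(x)})$ attached to $\pi_\mu(x)$. By \cite{DOR} Corollary 5.2.10 (the Tannakian criterion used implicitly in the proof of Proposition \ref{P: Hodge-Tate wa}), weak admissibility of $\pi_\mu(x)$ is equivalent to the inequality $t_H(\chi_\ast) - t_N(\chi_\ast) \leq 0$ for every standard parabolic $P$ with standard Levi $M$, every reduction $b_M$ of $b$ to $M$, and every $\chi \in X^\ast(P/Z_G)^+$, where these numerical invariants are computed on the Levi-reduced filtered $F$-isocrystal attached to $\pi_\mu(x)$ and $b_M$.

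Next I would translate this Levi-side inequality into a degree computation on the Fargues-Fontaine curve. Let $\lambda \in S_M(\mu)$ be the unique element with $x \in \Gr_{G,\lambda}$, and let $\pr_\lambda: \Gr_{G,\lambda} \to \Gr_{M,\lambda}$ be the projection of subsection \ref{subsection FF general}. By Lemma \ref{L:modif type}, the $P$-reduction $(\E_{b,x})_P$ of $\E_{b,x}$ induced by $\E_{b_P}$ satisfies $(\E_{b,x})_P\times_P M \simeq \E_{b_M,\pr_\lambda(x)}$, so for every $\chi \in X^\ast(P/Z_G)^+$ one obtains an isomorphism of line bundles $\chi_\ast(\E_{b,x})_P \simeq \chi_\ast\E_{b_M,\pr_\lambda(x)}$ on $X$. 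Applying the functor $\pi: \Modif_X \to \varphi\mathrm{-}\Fil\Mod_{C/\breve{\Q}_p}$ of subsection \ref{subsection Fil iso} to this rank-one modification and invoking the standard degree formula for modified line bundles on the Fargues-Fontaine curve yields
\[
\deg \chi_\ast(\E_{b,x})_P \;=\; t_H(\chi_\ast) - t_N(\chi_\ast),
\]
which is precisely the Levi-side quantity from the previous step.

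Combining these two translations yields the equivalence asserted in the proposition. The main subtlety is to verify that the Hodge filtration on the Levi quotient coming from $\pi_\mu(x) \in \Fl(G,\mu)$ matches the one attached to $\pr_\lambda(x) \in \Gr_{M,\lambda}$ via the Bialynicki-Birula maps for $P$ and $M$; this compatibility is exactly what is encoded in the construction of the $S_M(\mu)$-stratification in subsection \ref{subsection FF general}, together with the functoriality of $\pi_\mu$ under $P \hookrightarrow G$ and $P \twoheadrightarrow M$. The quasi-split hypothesis on $G$ enters only to guarantee that every standard Levi $M$ admits reductions $b_M$ defined over $\Q_p$, so that the Tannakian criterion applies in the form used in \cite{DOR}.
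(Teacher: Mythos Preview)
Your proposal is correct and follows essentially the same approach as the paper: reduce to the flag variety via $\pi_\mu$, use Lemma \ref{L:modif type} to pass to the Levi so that $\chi_\ast(\E_{b,x})_P \simeq \chi_\ast\E_{b_M,\pr_\lambda(x)}$, compute this degree as the degree of a rank-one filtered isocrystal, and invoke a semistability criterion from \cite{DOR}. One minor correction: since $b$ is non-trivial here, the relevant reference is \cite{DOR} Corollary 9.2.30 (the criterion for filtered isocrystals with $G$-structure), not Corollary 5.2.10 (which treats filtered vector spaces and is the input to Proposition \ref{P: Hodge-Tate wa}, the $b=1$ case).
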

\begin{proof}
As $[b]$ is basic, the group $J_b$ is an inner form of $G$. Since $G$ is quasi-split,  for any standard parabolic $P$ of $G$ with associated standard Levi $M$ such that $b$ admits a reduction $b_M$ to $M$, there exists a unique parabolic $P'$ of $J_b$ with associated Levi $M'$. Moreover, we have a bijection $X^\ast(P'/Z_{J_b})^+\st{\sim}{\ra}X^\ast(P/Z_G)^+, \chi'\mapsto \chi$.
Under the bijection
$\Gr_{J_b,\mu}(C,\Ol_C)\st{\sim}{\ra} \Gr_{G,\mu}(C,\Ol_C), x'\mapsto x$, we have
$\deg\chi'_\ast(\E_{1,x'})_{P'}=\deg\chi_\ast(\E_{b,x})_P$. Now as $x\in \Gr_{G,\mu}(C,\Ol_C)$ is weakly admissible if and only if $x'\in\Gr_{J_b,\mu}(C)$ is semi-stable, the proposition follows from  
Proposition \ref{P: semi-stable adm modif}.
\end{proof}

Let $b\in G(\breve{\Q}_p)$ be such that
$[b]\in B(G,\mu)$ is basic.
We have the weakly admissible locus \[\Gr_\mu^{wa}\subset \Gr_\mu,\] which is defined as the open Harder-Narasimhan stratum  $\Gr_\mu^{HN=[1]}$,
and the admissible locus \[\Gr_{\mu}^a\subset \Gr_\mu, \]which is defined as the open Newton stratum $\Gr_\mu^{Newt=[1]}$.
Recall that we have the inclusion of locally spatial diamonds over $\Spd\,\breve{E}$:
\[\Gr_\mu^a\subset \Gr_\mu^{wa}. \]
\begin{theorem}\label{T: fully HN general}
	Assume that $[b]\in B(G,\mu)$ is basic. Then we have the following equivalent statements:
	
	$B(G,\mu)$ is fully Hodge-Newton decomposable
	$\Longleftrightarrow \Gr_\mu^a=\Gr_\mu^{wa}$.
	
\end{theorem}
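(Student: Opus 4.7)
The plan is to adapt the proof strategy of \cite{CFS} Theorem 6.1 to the general (not necessarily minuscule) setting, by invoking the three ingredients developed in subsections \ref{subsection affine schubert}--\ref{subsection FF general}: the dimension formula and closure relation for the $B_{dR}^+$-affine Schubert cells (Proposition \ref{P:affine Schubert}); the stratification $\Gr_\mu=\coprod_{\lambda\in S_M(\mu)}\Gr_{G,\lambda}$ arising from parabolic and Levi subgroups together with Lemma \ref{L:modif type}; and the Fargues-Fontaine description of the weakly admissible locus for general $\mu$ (Proposition \ref{P:weakly adm general}).

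For the direction ($\Rightarrow$), let $x\in \Gr_\mu^{wa}(C,\Ol_C)$ with $\E_{b,x}\cong \E_{b'}$ for some $[b']\in B(G,0,\nu_b\mu^{-1})$, and suppose toward contradiction that $[b']\neq[1]$. The equivalences $(1)\Leftrightarrow(3)\Leftrightarrow(4)$ of Theorem \ref{T: fully HN}, which in this part are purely group-theoretic (cf.\ \cite{CFS} Corollary 4.15), supply a proper standard Levi $M\subsetneq G$ such that $([b'],\nu_b\mu^{-1})$ satisfies the Hodge-Newton condition relative to $M$; this yields a canonical reduction $\E_{b'_M}$ of $\E_{b'}$ to an $M$-bundle with $M$-central Newton vector. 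Choosing $\lambda\in S_M(\mu)$ with $x\in\Gr_{G,\lambda}(C,\Ol_C)$, Lemma \ref{L:modif type} identifies the projected $M$-reduction of $\E_{b,x}$ with $\E_{b_M,\pr_\lambda(x)}$. The weak admissibility of $x$, read through Proposition \ref{P:weakly adm general} applied to all $\chi\in X^*(P/Z_G)^+$ \emph{and} to the opposite parabolic of $P$, forces each $\deg\chi_*(\E_{b,x})_P=0$; combined with the Hodge-Newton condition this forces $[b'_M]=[1]$ and hence $[b']=[1]$, a contradiction.

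For the direction ($\Leftarrow$), suppose $B(G,\mu)$ is not fully Hodge-Newton decomposable. Via the bijection of Lemma \ref{L: bijections Kottwitz sets} there is a non-trivial $[b']\in B(G,0,\nu_b\mu^{-1})$ whose pair with $\nu_b\mu^{-1}$ fails the Hodge-Newton condition. The plan is to construct a point $x\in \Gr_\mu^{wa}(C,\Ol_C)$ with $\E_{b,x}\cong\E_{b'}$, producing a weakly admissible but not admissible point. One takes a Levi $M$ maximal for which $(b',\nu_b\mu^{-1})$ admits a compatible partial reduction, selects $\lambda\in S_M(\mu)$ so that $\Gr_{G,\lambda}$ meets the Newton stratum $\Gr_\mu^{Newt=[b']}$, and builds $x$ whose image under $\pr_\lambda$ realizes a reduction $\E_{b'_M}$ of $\E_{b'}$ to $M$; the maximality of $M$ and the failure of the Hodge-Newton condition then guarantee, via Proposition \ref{P:weakly adm general}, the degree inequalities defining $\Gr_\mu^{wa}$.

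I expect the main obstacle to lie in the direction ($\Leftarrow$): one must verify the degree inequalities of Proposition \ref{P:weakly adm general} across all standard parabolics (and all reductions of $\E_{b,x}$ to Levi subgroups other than the distinguished $M$). In the minuscule setting of \cite{CFS} this reduces essentially to a Schubert cell computation in $\Fl(G,\mu)$; in the present generality one must exploit the refined combinatorics of the index set $S_M(\mu)$ and the group-theoretic classification of non-fully-HND pairs from \cite{GHN,CFS} to rule out obstructions from other Levi subgroups.
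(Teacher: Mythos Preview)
Your proposal for the direction ``$\Rightarrow$'' is broadly compatible with the paper's approach, though your invocation of the ``opposite parabolic'' is not quite how the argument runs: the paper simply cites \cite{CFS} Lemmas 6.2 and 6.3 together with Lemma \ref{L:modif type} and Proposition \ref{P:weakly adm general}, and the degree computations go through as in the minuscule case.

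The genuine gap is in the direction ``$\Leftarrow$''. You propose to construct a specific point $x$ in a Newton stratum $\Gr_\mu^{Newt=[b']}$ and then \emph{directly verify} that $x$ is weakly admissible by checking the degree inequalities of Proposition \ref{P:weakly adm general} for all standard parabolics. You correctly identify this as the main obstacle, but your proposed resolution (``refined combinatorics of $S_M(\mu)$'') is not what the paper does and is unlikely to succeed on its own: controlling the degree inequalities for \emph{every} parabolic and every Levi reduction of $b$ simultaneously is exactly what is hard.

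The paper's argument is entirely different in structure. It does not exhibit a single weakly admissible point; instead it argues by contradiction that if the \emph{entire} Newton stratum $Z=\Gr_\mu^{Newt=[b']}$ (for a carefully chosen $[b']$, built from a simple root $\alpha$ with $\langle -w_0\mu,\tilde\omega_\alpha\rangle>1$) were disjoint from $\Gr_\mu^{wa}$, then one can pass to the \emph{dual} local Shtuka datum $(J_b,\{\mu^{-1}\},[b''])$ and use the period maps $\pi_{dR},\pi_{HT}$ of the moduli space $\Sht(J_b,\mu^{-1},b'')_\infty$. The point is that the fibers $\pi_{dR}(\pi_{HT}^{-1}(x))$ sweep out the admissible locus $\Gr_{J_b,\mu^{-1}}^a$ as $x$ ranges over $Z$, and the assumption forces this open locus to miss the stratum $\Gr_{J_b,\lambda_0}$ indexed by the \emph{maximal} element $\lambda_0=-w_0\mu\in S_M(\mu^{-1})$. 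But $\Gr_{J_b,\lambda_0}$ is \emph{dense} by Proposition \ref{P:closure semi-infinite} (not Proposition \ref{P:affine Schubert}, which concerns Schubert cells rather than semi-infinite orbits), contradicting openness of the admissible locus. Your outline omits this dual passage, the period-map transfer, and the density input from Proposition \ref{P:closure semi-infinite}; these are the heart of the argument.
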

\begin{proof}
	Let $G_{ad}$ be the adjoint group attached to $G$. Then we get a natural surjective morphism $\phi: \Gr_{G,\mu}\ra \Gr_{G_{ad},\mu_{ad}}$. Let $[b_{ad}]\in B(G_{ad},\mu_{ad})$ be the corresponding element under the bijection $B(G,\mu)\st{\sim}{\ra} B(G_{ad},\mu_{ad})$. We consider the admissible locus and weakly admissible locus of $\Gr_{G_{ad},\mu_{ad}}$ with respective to $b_{ad}$.
	One checks easily that \[\Gr_{G,\mu}^a=\phi^{-1}(\Gr_{G_{ad},\mu_{ad}}^a)\quad \tr{and}\quad \Gr_{G,\mu}^{wa}=\phi^{-1}(\Gr_{G_{ad},\mu_{ad}}^{wa}).\] Thus we are reduced to the case $G$ is adjoint.
	We first assume that $G$ is quasi-split.
	\\
	
	\textbf{The direction ``$\Rightarrow$'':}
	The arguments are identical to the direction (1) $\Rightarrow$ (2) in \cite{CFS} Theorem 6.1, using 
	\begin{itemize}
		\item 
		the Newton stratification $\Gr_\mu=\coprod_{[b']\in B(G,0,\nu_b\mu^{-1})}\Gr_\mu^{Newt=[b']}$, 
		\item \cite{CFS} Corollary 4.16 and Lemma 4.11, 
		\item the above
		Lemma \ref{L:modif type}, 
		\item  \cite{CFS} Lemmas 6.3 and 6.4,
		\item the above Proposition \ref{P:weakly adm general}.
	\end{itemize}
	Since this is the easier direction, we leave the details to the reader.
	\\
	
	\textbf{The direction ``$\Leftarrow$'':}
	We follow the arguments in the direction $(2)\Rightarrow (1)$  of \cite{CFS} Theorem 6.1, except in the last step Proposition \ref{P:closure semi-infinite} will be used. For the reader's convenience, and to clarify the ideas, we provide the details as follows. 
	\\
	
	We prove that if $B(G,\mu)$ is not fully Hodge-Newton decomposable, then $\Gr_\mu^{wa}\supsetneq \Gr_\mu^a$, i.e. there exists a point $x\in \Gr_\mu^{wa}(C,\Ol_C)\setminus \Gr_\mu^a(C,\Ol_C)$, for any algebraically closed perfectoid field $C|\breve{E}$.
	\\
	
	By \cite{CFS} Corollary 4.16, $B(J_b, \mu^{-1})$ is not fully Hodge-Newton decomposable, and thus by \cite{CFS} Proposition 4.14 (and its proof), there exists $\alpha\in \Delta_0$ such that \[\lan -w_0\mu^\diamond, \tilde{\omega}_\alpha\ran >1,\] where $\tilde{\omega}_\alpha=\sum_{\gamma\in \Phi,\gamma|_A=\alpha}\omega_\gamma$.
	Let $\beta\in \Delta$ such that $\beta|_A=-w_0\alpha$ with corresponding coroot $\beta^\vee\in \Delta^\vee$. 
	Then $\lan \beta^\vee, \tilde{\omega}_{-w_0\alpha}\ran =\lan(\beta^\vee)^\diamond, \tilde{\omega}_{-w_0\alpha}\ran = 1$ and thus \[\lan \mu-\beta^\vee,\tilde{\omega}_{-w_0\alpha}\ran=\lan -w_0\mu^\diamond,  \tilde{\omega}_\alpha\ran -\lan \beta^\vee, \tilde{\omega}_{-w_0\alpha}\ran  >0.\]
	Let $M$ be the standard Levi subgroup such that $\Delta_{0,M}=\Delta_0\setminus\{-w_0\alpha\}$. Write $P$ the associated standard parabolic subgroup. Then the element $(-\beta^\vee)^\sharp\in\pi_1(G)_\Gamma$ admits to a lift to $\pi_1(M)_\Gamma$, which we still denote by
	\[(-\beta^\vee)^\sharp\in \pi_1(M)_\Gamma=\Big(X_\ast(T)/\lan \Phi_M^\vee\ran\Big)_\Gamma.\]
	Let $[b_M']\in B(M)_{basic}$ be the basic element in $B(M)$ such that it is mapped to $(-\beta^\vee)^\sharp$ under the bijection $\kappa_M: B(M)_{basic}\st{\sim}{\ra}\pi_1(M)_\Gamma$. Then $[\nu_{b'_M}]$ is $G$-antidominant.  Let $[b']\in B(G)$ be the image of $[b_M']$ under the natural map $B(M)\ra B(G)$.  Then  by construction \[[\nu_{b'}]=w_0[\nu_{b'_M}],\quad M_{b'}=M,\quad [b']\in B(G,0,\nu_b\mu^{-1})\] and the image of $[b']$ in $B( J_b,\mu^{-1})$ is not Hodge-Newton decomposable. As in \cite{CFS}, we may assume that $[b']\in B(G,0,\nu_b\mu^{-1})\setminus\{[1]\}$ is minimal.
	\\
	
	Consider the Newton stratum $Z:=\Gr_\mu^{Newt=[b']}$ attached to $[b']$. Then naturally $Z\bigcap \Gr_\mu^{wa}\subset \Gr_\mu^{wa}\setminus \Gr_\mu^a$. We claim that \[Z(C,\Ol_C)\bigcap \Gr_\mu^{wa}(C,\Ol_C)\neq \emptyset.\]This will conclude the proof of the direction ``$\Leftarrow$''. 
	\\
	
	Suppose the claim was not true, i.e. for any $x\in Z(C,\Ol_C)$, $x$ is not weakly admissible. By the definition of $Z$, we have \[\E_{b,x}\simeq \E_{b'}.\] By Proposition \ref{P:weakly adm general}, there exists a standard maximal parabolic $Q$ with the corresponding Levi $M_Q$, a reduction $b_{M_\Q}(x)$ of $b$ to $M_Q$, a character $\chi\in X_\ast(Q/Z_G)^+$ such that $\deg\chi_\ast(\E_{b,x})_Q>0$. Consider the map \[v: X^\ast(Q/Z_G)\ra \Z, \quad \chi'\mapsto \deg\chi_\ast'(\E_{b,x})_Q.\] It defines an element $v\in \Nc(G)$ and we have \[v\leq \nu(\E_{b,x})=-w_0[\nu_{b'}]\] by \cite{CFS} Theorem 1.8 (1). Then $-w_0v\leq [\nu_{b'}]$. By the description of $B(G,0,\nu_b\mu^{-1})$ in \cite{CFS} Corollary 4.4, we have $-w_0v\in B(G,0,\nu_b\mu^{-1})$. By the minimality of $[b']$, we get \[-w_0v=  [\nu_{b'}].\] By \cite{CFS} Theorem 1.8 (2) and the maximality of $Q$, we get $Q=P$ and $(\E_{b,x})_Q$ is the canonical reduction of $\E_{b,x}$. 
	
	Consider the decomposition $\Gr_{G,\mu}(C,\Ol_C)=\coprod_{\lambda\in S_M(\mu)}\Gr_{G,\mu,\lambda}(C,\Ol_C)$.
	Let $\lambda\in S_M(\mu)$ such that $x\in \Gr_{G,\mu,\lambda}(C,\Ol_C)$. By Lemma \ref{L:modif type} we have
	\[\E_{b_M'}=(\E_{b'})_P\times^PM=(\E_{b,x})_P\times^PM=\E_{b_M, \pr_\lambda(x)},\]
	where $b_M=b_{M_Q}(x)$ is the above reduction of $b$ to $M_Q=M$. 
	Therefore, by taking $-c_1^M(\cdot)$, we get
	\[\kappa_M(b_M')=\kappa_M(b_M)-\lambda^\sharp \in \pi_1(M)_\Gamma,\]
	which implies \[[\nu_{b'_M}]=[\nu_{b_M}]-\lambda^\sharp\otimes 1\in \pi_1(M)_{\Gamma,\Q}\] by our previous convention. As $\kappa_M(b_M')=(-\beta^\vee)^\sharp$ by construction, we get
	\begin{equation}\label{E:1}
	\lambda^\sharp\otimes 1=[\nu_{b_M}]+(\beta^\vee)^\sharp\otimes1\in \pi_1(M)_{\Gamma,\Q}.
	\end{equation}
	
	Next we pass to the dual side.
	Consider the inner form $J_b$ of $G$. Let $[b'']\in B(J_b)$ be the element which is mapped to $[b']\in B(G)$ under the bijection $B(J_b)\st{\sim}{\ra} B(G)$. Since $G$ is quasi-split and $b'$ admits reductions to $P$ and $M$ (by construction), the groups $P$ and $M$ transfer to parabolic and Levi subgroups respectively of $J_b$, which we still denote by $P$ and $M$ by abuse of notation. Moreover, there exist corresponding reductions $b_M''$ and $b_P''$ of $b''$ to $M$ and $P$ respectively. The isomorphism $J_{b,\breve{\Q}_p}\simeq G_{\breve{\Q}_p}$ induces an identification $\Gr_{J_b,\mu}=\Gr_{G,\mu}$, and by Theorem \ref{T: duality general} we have \[\Gr_{J_b,\mu}^{Newt=[b'']}=\Gr_{G,\mu}^{Newt=[b']}=Z.\]
	By Lemma 4.1, the bijection $B(J_b)\st{\sim}{\ra}B(G)$ restricts to a bijection $B(J_b,\mu^{-1})\st{\sim}{\ra} B(G,0,\nu_b\mu^{-1})$. As $[b']\in B(G,0,\nu_b\mu^{-1})$, we get $[b'']\in B(J_b,\mu^{-1})$.
	Consider the dual local Shtuka datum $(J_b,\{\mu^{-1}\},[b''])$. We have the following diagram
	\[\xymatrix{&\Sht(J_b,\mu^{-1}, b'')_\infty\ar@{->>}[ld]_{\pi_{dR}}\ar@{->>}[rd]^{\pi_{HT}}&\\
		\Gr_{J_b,\mu^{-1}}^a& &\Gr_{J_b, \mu}^{Newt=[b'']}.
	}\]
	
	Recall that we have our point $x\in Z(C,\Ol_C)=\Gr_{J_b, \mu}^{Newt=[b'']}(C,\Ol_C)$. Consider the subset
	\[\pi_{dR}(\pi_{HT}^{-1}(x))\subset \Gr_{J_b,\mu^{-1}}(C,\Ol_C).\]
	For the parabolic $P$ and Levi $M$ of $J_b$, we consider the digram of the corresponding $B_{dR}^+$-affine Grassmannians. Under the identifications $\Gr_{G,\mu}=\Gr_{J_b,\mu}$ and $S_M^G(\mu)=S_M^{J_b}(\mu)$, the decompositions $\Gr_{G,\mu}=\coprod_{\lambda\in S_M^G(\mu)}\Gr_{G,\mu,\lambda}$ and $\Gr_{J_b,\mu}=\coprod_{\lambda\in S_M^{J_b}(\mu)}\Gr_{J_b,\mu,\lambda}$ coincide. Therefore, we can view $x\in \Gr_{J_b,\mu,\lambda}(C,\Ol_C)$.
	
	We consider the side $\Gr_{J_b,\mu^{-1}}$.
	Let $z\in \pi_{dR}(\pi_{HT}^{-1}(x))\subset \Gr_{J_b,\mu^{-1}}(C,\Ol_C)$ be a point. Consider the decomposition of $\Gr_{J_b,\mu^{-1}}(C,\Ol_C)$ indexed by $S_M(\mu^{-1}):=S_M^{J_b}(\mu^{-1})$.
	Let $\lambda'\in S_M(\mu^{-1})$ be such that \[z\in \Gr_{J_b,\mu^{-1},\lambda'}(C,\Ol_C).\] By Lemma \ref{L:modif type} again, we have
	\[(\E_{b'',z})\times^PM\simeq \E_{b''_M, \pr_{\lambda'}(z)}.\]
	Let $\lambda_0:=-w_0\mu\in S_M(\mu^{-1})$ be the maximal element. If $\lambda'=\lambda_0$, that is 
	\[\lambda'=-w_0\mu\in X_\ast(T)^+_M\subset X_\ast(T),\]  then 
	we have
	\[\lambda'\otimes 1=(-w_0\mu)\otimes1\in \pi_1(M)_\Q=\Big(X_\ast(T)/\lan\Phi_M^\vee\ran\Big)_\Q.\]
	
	Now we come back to the group $G$ and consider $M$ as a Levi subgroup of $G$. We have our previous notation $\pi_1(M)_{\Gamma,\Q}$, taking into account the Galois action on $G_{\breve{\Q}_p}$ defined by $G$ over $\Q_p$.
	Then
	\begin{equation}\label{E:2}(\lambda')^\sharp\otimes 1=(-w_0\mu)^\sharp\otimes1\in \pi_1(M)_{\Gamma,\Q}.\end{equation}
	Recall that we have the corresponding element $\lambda\in S_M(\mu)$ such that $x\in \Gr_{J_b,\lambda}(C,\Ol_C)$.
	Then
	\[\begin{split}(\lambda')^\sharp\otimes 1&=-\lambda^\sharp\otimes 1\\&=-[\nu_{b_M}]-(\beta^\vee)^\sharp\otimes 1\in \pi_1(M)_{\Gamma,\Q},\end{split}\]
	where the second ``$=$'' comes from equation (\ref{E:1}).
	Combined with equation (\ref{E:2}), we get
	\begin{equation}\label{E:3}-[\nu_{b_M}]=(-w_0\mu)^\sharp\otimes1+(\beta^\vee)^\sharp\otimes 1\in \pi_1(M)_{\Gamma,\Q}.\end{equation}
	Pushing forward equation (\ref{E:3}) to $\pi_1(G)_{\Gamma,\Q}$ 
	and taking $\lan\cdot,\tilde{\omega}_{-w_0\alpha}\ran$, we get
	
	\[\lan [\nu_{b}],\tilde{\omega}_{-w_0\alpha}\ran=\lan \mu-\beta^\vee,\tilde{\omega}_{-w_0\alpha}\ran  >0. \]
	This is a contradiction, since $b$ is basic in $G$ and thus $\lan [\nu_{b}],\tilde{\omega}_{-w_0\alpha}\ran=0$.
	Therefore, for any $z\in \pi_{dR}(\pi_{HT}^{-1}(x))$ such that $z\in \Gr_{J_b,\mu^{-1},\lambda'}(C,\Ol_C)$, we have \[\lambda'\neq \lambda_0.\] 
	
	Now let $x\in Z(C,\Ol_C)$ vary. Since \[\Gr_{J_b,\mu^{-1}}^a(C,\Ol_C)=\coprod_{x\in Z(C,\Ol_C)}\pi_{dR}(\pi_{HT}^{-1}(x)) ,\] by the above discussion, we get
	\[\Gr_{J_b,\mu^{-1}}^a(C,\Ol_C)\bigcap \Gr_{J_b,\mu^{-1},\lambda_0}(C,\Ol_C)=\emptyset.\]
	As $\Gr_{J_b,\mu^{-1}}^a\subset \Gr_{J_b,\mu^{-1}}$ is open and $\Gr_{J_b,\mu^{-1},\lambda_0}\subset \Gr_{J_b,\mu^{-1}}$ is dense by Proposition \ref{P:closure semi-infinite}, we must have 
	\[\Gr_{J_b,\mu^{-1}}^a(C,\Ol_C)\bigcap\Gr_{J_b,\mu^{-1},\lambda_0}(C,\Ol_C)\neq\emptyset.\]
	This contradiction implies that the claim is true: $Z(C,\Ol_C)\bigcap \Gr_\mu^{wa}(C,\Ol_C)\neq \emptyset$. Thus we have proved the direction ``$\Leftarrow$''.
	\\
	
	\textbf{The general case:} now consider the case $G$ non necessarily quasi-split.
	Recall that we can assume that $G$ is adjoint. Let $H$ be a quasi-split inner form of $G$. Then $H$ is adjoint and $G=J_{b^\ast}$ for some $[b^\ast]\in B(H)_{basic}=H^1(\Q_p, H)$. Let $[b^H]\in B(H)$ be the image of $[b]$ under the bijection $B(G)\st{\sim}{\ra}B(H)$. We can consider the admissible locus and weakly admissible locus inside $\Gr_{H,\mu}$ with respective to $b^H$.
	Under the identification
	$\Gr_{G,\mu}=\Gr_{H,\mu}$, we have \[\Gr_{G,\mu}^a=\Gr_{H,\mu}^a \quad \tr{and} \quad \Gr_{G,\mu}^{wa}=\Gr_{H,\mu}^{wa}.\] Thus we are reduced to the quasi-split case as the last paragraph of the proof of \cite{CFS} Theorem 6.1.
	
\end{proof}

Come back to the Hodge-Tate side $\Gr_{\mu^{-1}}$. For any algebraically perfectoid field $C|\breve{E}$ and any $x\in \Gr_{\mu^{-1}}(C,\Ol_C)$, the inequality $\nu(\E_1,\E_{1,x},f)\leq \nu(\E_{1,x})$ (see subsection \ref{subsection Newt HN Hodge-Tate} and Proposition \ref{P: HN and Newt} (1))  implies that we have always the inclusion for open Newton and Harder-Narasimhan strata: \[\Gr_{\mu^{-1}}^{Newt=[b]}\subset\Gr_{\mu^{-1}}^{HN=[b]}.\]
Our previous efforts (cf. Theorems \ref{T: fully HN} and \ref{T: duality general}) imply the following enlarged version of Theorem \ref{T: fully HN general}:
\begin{corollary}\label{C: fully HN general}
	Let $[b]\in B(G,\mu)$ be basic. The following statements are equivalent:
	\begin{enumerate}
		\item $B(G,\mu)$ is fully Hodge-Newton decomposable,
		\item $\Gr_\mu^a=\Gr_\mu^{wa}$,
		\item  $\Gr_{\mu^{-1}}^{Newt=[b]}=\Gr_{\mu^{-1}}^{HN=[b]}$.
	\end{enumerate}
\end{corollary}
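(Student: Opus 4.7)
The plan is to deduce this corollary by combining Theorem~\ref{T: fully HN general} with the dualities of Theorem~\ref{T: duality general} and the group-theoretic equivalences recalled from \cite{CFS} (which were already the key ingredient behind Theorem~\ref{T: fully HN}). The equivalence $(1)\Leftrightarrow(2)$ is nothing other than Theorem~\ref{T: fully HN general} itself, so the real content is to prove $(1)\Leftrightarrow(3)$. I would not attempt a direct proof of $(1)\Leftrightarrow(3)$ on the Hodge-Tate side; instead I would transfer it to the de Rham side of the \emph{dual} datum, where Theorem~\ref{T: fully HN general} already applies.

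More precisely, first I would invoke the twin towers principle in its generalized form (Theorem~\ref{T: duality general}) for the basic class $[b]\in B(G,\mu)$. Under the bijection $B(G,\mu)\st{\sim}{\to}B(J_b,0,\nu_{b^{-1}}\mu)$ of Lemma~\ref{L: bijections Kottwitz sets}, the class $[b]$ corresponds to the trivial class $[1]\in B(J_b,0,\nu_{b^{-1}}\mu)$. Hence Theorem~\ref{T: duality general}(1) identifies, as sub-diamonds of $\Gr_{\mu^{-1}}=\Gr_{J_b,\mu^{-1}}$ over $\Spd\,\breve{E}$,
\[
\Gr_{G,\mu^{-1}}^{Newt=[b]}=\Gr_{J_b,\mu^{-1},b^{-1}}^{Newt=[1]}=\Gr_{J_b,\mu^{-1}}^{a},
\qquad
\Gr_{G,\mu^{-1}}^{HN=[b]}=\Gr_{J_b,\mu^{-1},b^{-1}}^{HN=[1]}=\Gr_{J_b,\mu^{-1}}^{wa}.
\]
Thus condition~(3) for the datum $(G,\{\mu\},[b])$ on the Hodge-Tate side translates verbatim into the equality of the admissible and weakly admissible loci in $\Gr_{J_b,\mu^{-1}}$, i.e.\ into condition~(2) for the dual local Shtuka datum $(J_b,\{\mu^{-1}\},[b^{-1}])$.

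Next, I would apply Theorem~\ref{T: fully HN general} to the dual datum $(J_b,\{\mu^{-1}\},[b^{-1}])$, which is a basic local Shtuka datum for $J_b$. This gives
\[
\Gr_{J_b,\mu^{-1}}^{a}=\Gr_{J_b,\mu^{-1}}^{wa}
\quad\Longleftrightarrow\quad
B(J_b,\mu^{-1})\ \text{is fully Hodge-Newton decomposable}.
\]
Finally, \cite{CFS} Corollary~4.15 asserts that the fully Hodge-Newton decomposability of $B(J_b,\mu^{-1})$ is equivalent to that of $B(G,\mu)$ (a purely group-theoretic statement, independent of whether $\mu$ is minuscule, since it concerns only the combinatorics of $B(G,\mu)$). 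Stringing these three equivalences together yields $(3)\Leftrightarrow(1)$.

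There is no genuine obstacle left at this stage: the hard analytic input (the generalized Colmez-Fontaine inclusion $\Gr_\mu^a\subset \Gr_\mu^{wa}$, the closure relation on $\Gr_{\leq\mu}$ via Proposition~\ref{P:closure semi-infinite}, the description of $\Gr_\mu^{wa}$ via Proposition~\ref{P:weakly adm general}, and the $\mathrm{GHN}$-type dichotomy) has all been absorbed into Theorem~\ref{T: fully HN general}, while the non-minuscule duality is packaged in Theorem~\ref{T: duality general}. The only point deserving a line of care in the write-up is to verify that applying Theorem~\ref{T: fully HN general} to the dual datum is legitimate: here one uses that $[b^{-1}]\in B(J_b,\mu^{-1})$ is again basic (since $[b]$ was basic and the bijection $B(J_b)\simeq B(G)$ preserves basic classes), and that under our identification $J_{b^{-1}}=G$ the ``second dualization'' returns the original datum, so the setup of Theorem~\ref{T: fully HN general} is satisfied symmetrically. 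Once this is noted, the corollary follows.
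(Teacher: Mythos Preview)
Your proposal is correct and follows essentially the same approach as the paper: the equivalence $(1)\Leftrightarrow(2)$ is Theorem~\ref{T: fully HN general}, and $(1)\Leftrightarrow(3)$ is obtained by combining the duality of Theorem~\ref{T: duality general} with Theorem~\ref{T: fully HN general} applied to the dual datum $(J_b,\{\mu^{-1}\},[b^{-1}])$ and the group-theoretic equivalence from \cite{CFS} Corollary~4.15. The paper phrases this more tersely by citing Theorems~\ref{T: fully HN} and~\ref{T: duality general}, but your expanded write-up unpacks exactly the same logic.
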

Of course, one can make the above corollary into a similar version as Theorem \ref{T: fully HN}, by including the corresponding information for the dual local Shtuka datum $(J_b, \{\mu^{-1}\}, [b^{-1}])$. One can also generalize the results further to $\Gr_{\leq\mu}$ and $\Gr_{\leq \mu^{-1}}$. We leave these tasks to the reader.


\section{Application to moduli of local $G$-Shtukas}\label{section loc Shtukas}
Let $(G,\{\mu\},[b])$ be a local Shtuka datum. Fix a representative $b\in G(\breve{\Q}_p)$ of $[b]$, and let $\Sht(G,\mu, b)_\infty$ be the associated moduli space of local $G$-Shtukas of type $\{\mu\}$ with infinite level.

Consider the Hodge-Tate period map of diamonds over $\breve{E}$
\[\pi_{HT}: \Sht(G,\mu,b)_\infty\lra \Gr_{\mu^{-1}}^{[b]}, \]where
we write $\Gr_{\mu^{-1}}^{[b]}=\Gr_{\mu^{-1}}^{Newt=[b]}$ for the associated Newton stratum inside $\Gr_{\mu^{-1}}$ for simplicity. By subsection \ref{subsection Newt HN Hodge-Tate}, the Harder-Narasimhan stratification on $\Gr_{\mu^{-1}}$ induces a Harder-Narasimhan stratification on $\Gr_{\mu^{-1}}^{[b]}$:
\[\Gr_{\mu^{-1}}^{[b]}=\coprod_{[b']\in B(G,\mu), [b']\leq [b]}\Gr_{\mu^{-1}}^{[b], HN=[b']} \]where each $\Gr_{\mu^{-1}}^{[b], HN=[b']}\subset \Gr_{\mu^{-1}}^{[b]}$ is the pullback of $\Gr_{\mu^{-1}}^{HN=[b']}\subset \Gr_{\mu^{-1}}$ under the inclusion $\Gr_{\mu^{-1}}^{[b]}\subset \Gr_{\mu^{-1}}$, which is empty if $[b']\geq [b]$ and $[b']\neq [b]$ (see subsection \ref{subsection Newt HN Hodge-Tate}).
The above stratification in turn induces a Harder-Narasimhan stratification on $\Sht(G,\mu,b)_\infty$ by diamonds
\[ \Sht(G,\mu,b)_\infty=\coprod_{[b']\in B(G,\mu),[b']\leq [b]}\Sht(G,\mu,b)^{HN=[b']}_\infty,\]where \[\Sht(G,\mu,b)^{HN=[b']}_\infty=\pi_{HT}^{-1}(\Gr_{\mu^{-1}}^{[b], HN=[b']}).\] By Corollary \ref{C: property HN strata general}, we have
\begin{corollary}\label{C: HN local shtuka}
	For any non basic $[b']\in B(G,\mu)$ such that $[b']\leq [b]$, the stratum $\Sht(G,\mu,b)^{HN=[b']}_\infty$ is a parabolic induction.
\end{corollary}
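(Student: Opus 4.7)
The plan is to lift the parabolic induction structure on the Harder-Narasimhan strata of $\Gr_{\mu^{-1}}$ to the moduli of local $G$-Shtukas by pulling back along the Hodge-Tate period morphism. By Corollary \ref{C: property HN strata general}, for any non-basic $[b']\in B(G,\mu)$ the stratum $\Gr_{\mu^{-1}}^{HN=[b']}$ carries a parabolic induction structure. This structure arises, via the Bialynicki-Birula map $\pi_{\mu^{-1}}$, as a pullback from the corresponding structure on $\Fl(G,\mu^{-1})^{HN=[b']}$ of Theorem \ref{T: property HN strata}. Concretely, there exist a HN type $\theta=(P,\nu_P)$ with standard Levi $M\subset P$ and a $P(\Q_p)$-stable locally spatial sub diamond $Y\subset \Gr_{\mu^{-1}}^{HN=[b']}$ (essentially the preimage of $\Fl(P,\nu_P)^\theta$) together with a $G(\Q_p)$-equivariant homeomorphism $Y\times_{P(\Q_p)} G(\Q_p)\st{\sim}{\lra} \Gr_{\mu^{-1}}^{HN=[b']}$.

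Next, since $\Gr_{\mu^{-1}}^{[b]}\subset \Gr_{\mu^{-1}}$ is locally closed and $G(\Q_p)$-stable by subsection \ref{subsection Newt HN Hodge-Tate}, setting $Y^{[b]}:= Y\cap \Gr_{\mu^{-1}}^{[b]}$ yields a $P(\Q_p)$-stable locally closed sub diamond of $Y$, together with a $G(\Q_p)$-equivariant identification
\[Y^{[b]}\times_{P(\Q_p)} G(\Q_p)\st{\sim}{\lra}\Gr_{\mu^{-1}}^{[b],HN=[b']}.\]

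Finally, the Hodge-Tate period morphism $\pi_{HT}:\Sht(G,\mu, b)_\infty\lra \Gr_{\mu^{-1}}^{[b]}$ is $G(\Q_p)$-equivariant: it is a pro-\'etale $\wt{J_b}$-torsor whose $J_b(\Q_p)$-action commutes with the $G(\Q_p)$-action (cf. subsection \ref{subsection twin loc shimura}). Pulling the previous identification back through $\pi_{HT}$ therefore produces a $G(\Q_p)$-equivariant homeomorphism
\[\pi_{HT}^{-1}(Y^{[b]})\times_{P(\Q_p)} G(\Q_p)\st{\sim}{\lra}\Sht(G,\mu, b)_\infty^{HN=[b']},\]
which exhibits the stratum as a parabolic induction, as desired.

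The main obstacle is organizational rather than substantive: one has to carefully track $G(\Q_p)$-equivariance through the three successive operations of Bialynicki-Birula, intersection with the open Newton stratum, and pullback via the Hodge-Tate torsor, and verify that the quotient by $P(\Q_p)$ commutes with pullback along $\pi_{HT}$. The only genuinely new input is Corollary \ref{C: property HN strata general}, which itself reduces to Theorem \ref{T: property HN strata} for the flag variety; everything else is bookkeeping to ensure the induction data transfer cleanly through $\pi_{HT}$.
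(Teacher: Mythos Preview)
Your proposal is correct and follows essentially the same approach as the paper: the paper's proof is simply the one-line citation ``By Corollary \ref{C: property HN strata general}'', and your write-up spells out in detail how the parabolic induction structure on $\Gr_{\mu^{-1}}^{HN=[b']}$ intersects with the Newton stratum and then pulls back along the $G(\Q_p)$-equivariant Hodge-Tate period map. One minor imprecision: a given HN stratum may decompose into several HN types $\theta$ rather than a single one (see the proof of Theorem \ref{T: property HN strata}), but since all such $\theta$ share the same parabolic $P=P_v$ determined by the HN vector, your $Y$ may simply be taken as the union of the corresponding $P(\Q_p)$-stable pieces and the argument goes through unchanged.
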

Of course, when $[b]=[b_0]$ is basic, the above Harder-Narasimhan stratification on $\Sht(G,\mu,b)_\infty$ is trivial and thus Corollary \ref{C: HN local shtuka} says nothing in this case.

We may also consider the Hodge-Tate period map of diamonds over $\breve{E}$
\[\pi_{HT}: \Sht(G,\leq\mu,b)_\infty\lra \Gr_{\leq\mu^{-1}}^{[b]}. \]
Then we have similar conclusion for $\Sht(G,\leq\mu,b)_\infty$ as above.

\section{Application to Shimura varieties}\label{section Shimura}
Let $(\mathbf{G},\mathbf{X})$ be \emph{an arbitrary Shimura datum}. Let $p$ be a prime number. Consider the conjugacy class of Hodge cocharacters $\{\mu\}$ attached to $\mathbf{X}$, which we view a conjugacy class of cocharacters over $\ov{\Q}_p$. Set $G=\mathbf{G}_{\Q_p}$.

Let $v|p$ be a place of the reflex field $\mathbf{E}=\mathbf{E}(\mathbf{G},\mathbf{X})$ above $p$  and $E=\mathbf{E}_v$. Let $\mathbf{K}\subset \mathbf{G}(\mathbb{A}_f)$ be a sufficiently small open compact subgroup. Attached to $(\mathbf{G},\mathbf{X},\mathbf{K})$, we have the Shimura variety
$\Sh_\K$ over the local reflex field $E$, which we view as an adic space. Assume that $\K$ is of the form $\K=KK^p$ with $K\subset G(\Q_p)$ and $K^p\subset\mathbf{G}(\A_f^p)$. Consider the $p$-adic flag variety $\Fl(G,\mu^{-1})$ over $E$, on which we have an action of $G(\Q_p)$. Let $G^c$ denote the quotient of $G$ by the maximal $\Q$-
anisotropic $\mathbb{R}$-split subtorus in the center $Z_G$ of $G$. Then we have an induced action $G^c(\Q_p)$ on $\Fl(G,\mu^{-1})$. Let $K^c\subset G^c(\Q_p)$ be the induced open compact subgroup.
The quotient space \[[\ul{K^c}\setminus \Fl(G,\mu^{-1})^\Diamond]\] exists as \emph{a small $v$-stack} in the sense of \cite{S}.
The main results of \cite{LZ}  imply that we have the Hodge-Tate period map
\[\pi_{HT}: \Sh_\K^\Diamond \lra [\ul{K^c}\setminus \Fl(G,\mu^{-1})^\Diamond],\]
which is a morphism of small $v$-stacks over $E$. More precisely, by \cite{LZ} Theorem 1.2 the universal $p$-adic local system over $\Sh_\K$ is de Rham, thus we get a relative Hodge-Tate filtration on it; by standard arguments as in \cite{CS} section 2, the type of this Hodge-Tate filtration is exactly given by $\{\mu^{-1}\}$.  


Noting that the $\ul{K^c}$-action on $\Fl(G,\mu^{-1})^\Diamond$ preserves the Harder-Narasimhan stratification \[\Fl(G,\mu^{-1})^\Diamond=\coprod_{[b]\in B(G,\mu)}\Fl(G,\mu^{-1})^{HN=[b],\Diamond},\]
we get a stratification on $\Sh_\K^\Diamond$ via $\pi_{HT}$:
\[ \Sh_\K^\Diamond=\coprod_{[b]\in B(G,\mu)}\Sh_\K^{HN=[b]},\]
where \[\Sh_\K^{HN=[b]}=\pi_{HT}^{-1}\Big(\big[\ul{K^c}\setminus \Fl(G,\mu^{-1})^{HN=[b],\Diamond}\big]\Big).\]
By Theorem \ref{T: property HN strata}, we have
\begin{corollary}\label{C: HN strata shimura}
	For any non basic $[b]\neq [b_0]$, the stratum $\Sh_\K^{HN=[b]}$ is a parabolic induction. 
\end{corollary}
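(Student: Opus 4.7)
The plan is to transfer the parabolic induction structure obtained on the flag variety side, via Theorem \ref{T: property HN strata}, through the Hodge-Tate period morphism $\pi_{HT}$. The starting point is the refinement
\[\Fl(G,\mu^{-1})^{HN=[b]}=\coprod_{\theta\in\Theta(G,\mu),\,H(\theta)=\nu([b])}\Fl(G,\mu^{-1})^\theta\]
established in the proof of Theorem \ref{T: property HN strata}, together with the homeomorphism $\Fl(P,\nu_P)^\theta\times_{P(\Q_p)} G(\Q_p)\simeq \Fl(G,\mu^{-1})^\theta$ for each HN type $\theta=(P,\nu_P)$, and the affine fibration $\Fl(P,\nu_P)^\theta\to\Fl(M,\nu_M)^{ss}$ of rank $\ell(w)$ over the semi-stable locus of the Levi flag variety. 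The first step is to pass to diamonds and observe that the entire stratification is $G(\Q_p)$-equivariant, hence descends to a stratification of $[\ul{K^c}\setminus\Fl(G,\mu^{-1})^\Diamond]$ indexed by the same HN types, because the action of $K^c\subset G^c(\Q_p)$ is induced by that of $G(\Q_p)$.

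Next I would pull back via $\pi_{HT}$: setting $\Sh_\K^\theta:=\pi_{HT}^{-1}\bigl(\bigl[\ul{K^c}\setminus \Fl(G,\mu^{-1})^{\theta,\Diamond}\bigr]\bigr)$ we obtain a refinement
\[\Sh_\K^{HN=[b]}=\coprod_{\theta}\Sh_\K^\theta.\]
Since pullback along a morphism of $v$-stacks preserves the product decomposition $\Fl(P,\nu_P)^\theta\times_{P(\Q_p)}G(\Q_p)$ and the affine fibration over $\Fl(M,\nu_M)^{ss}$, one concludes that each $\Sh_\K^\theta$ has the shape of a parabolic induction, i.e. is (the quotient by $K^c$ of) an affine bundle over a pullback of $\Fl(M,\nu_M)^{ss,\Diamond}$ against $\pi_{HT}$, inflated from the parabolic $P(\Q_p)$ to $G(\Q_p)$. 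The non-basic assumption $[b]\neq[b_0]$ ensures that every HN type $\theta$ contributing to the stratum has proper parabolic $P\subsetneq G$, so this is a genuine (proper) parabolic induction.

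The main technical obstacle will be making the parabolic induction picture precise at the level of small $v$-stacks, in particular verifying that the quotient $[\ul{K^c}\setminus\Fl(G,\mu^{-1})^{\theta,\Diamond}]$ is compatible with the homeomorphism $\Fl(P,\nu_P)^\theta\times_{P(\Q_p)}G(\Q_p)\simeq \Fl(G,\mu^{-1})^\theta$; here one has to check that the $K^c$-action (coming from the image of $K\subset G(\Q_p)$ in $G^c(\Q_p)$) is compatible with the twisted product structure, which essentially amounts to observing that $P(\Q_p)\cdot K^c$-orbits on $\Fl(P,\nu_P)^\theta$ yield the correct quotient. Once this is in place, the conclusion follows by the same argument as in Corollary \ref{C: HN local shtuka} in the local Shtuka setting, replacing $\pi_{HT}: \Sht(G,\mu,b)_\infty\to\Gr_{\mu^{-1}}^{[b]}$ by the Hodge-Tate morphism of the global Shimura variety.
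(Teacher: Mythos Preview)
Your proposal is correct and follows the same route as the paper: the corollary is an immediate consequence of Theorem \ref{T: property HN strata} together with the definition of $\Sh_\K^{HN=[b]}$ as the $\pi_{HT}$-pullback of $[\ul{K^c}\setminus \Fl(G,\mu^{-1})^{HN=[b],\Diamond}]$. The paper records this in a single line, whereas you have unpacked the mechanism (the refinement into HN types $\theta$, the $G(\Q_p)$-equivariance, and the compatibility with the quotient by $K^c$); this elaboration is accurate but more than the paper supplies, and the ``technical obstacle'' you flag is not addressed there either---the term ``parabolic induction'' is used in the same informal sense as in Theorem \ref{T: property HN strata} and is taken to be preserved under pullback without further justification.
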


Similarly, the $\ul{K^c}$-invariant Newton stratification \[\Fl(G,\mu^{-1})^\Diamond=\coprod_{[b]\in B(G,\mu)}\Fl(G,\mu^{-1})^{Newt=[b],\Diamond}\] also induces a stratification\footnote{Let us call it the Newton stratification of $\Sh_\K^\Diamond$. If there exists some nice integral model of $\Sh_\K$, then we have the Newton stratification on the special fiber and we can pullback it to the tube (the good reduction locus) over the special fiber. There exist some subtleties between the two Newton stratifications, cf. \cite{CS}.} on $\Sh_\K^\Diamond$:
\[ \Sh_\K^\Diamond=\coprod_{[b]\in B(G,\mu)}\Sh_\K^{Newt=[b]},\]
where \[\Sh_\K^{Newt=[b]}=\pi_{HT}^{-1}\Big(\big[\ul{K^c}\setminus \Fl(G,\mu^{-1})^{Newt=[b],\Diamond}\big]\Big).\]
Then we have an inclusion of open strata
\[ \Sh_\K^{Newt=[b_0]}\subset \Sh_\K^{HN=[b_0]}.\]
Theorem \ref{T: fully HN} implies
\begin{corollary}\label{C: full HN shimura}
	If the pair $(G,\{\mu\})$ is fully Hodge-Newton decomposable, then $\Sh_\K^{Newt=[b_0]}=\Sh_\K^{HN=[b_0]}$. 
\end{corollary}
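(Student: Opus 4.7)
The plan is to reduce this corollary directly to Theorem \ref{T: fully HN} by pulling back along the Hodge-Tate period map. By construction, both strata on the Shimura variety are defined as preimages of substacks of $[\ul{K^c}\setminus \Fl(G,\mu^{-1})^\Diamond]$, namely
\[\Sh_\K^{Newt=[b_0]}=\pi_{HT}^{-1}\Big(\big[\ul{K^c}\setminus \Fl(G,\mu^{-1})^{Newt=[b_0],\Diamond}\big]\Big)\]
and similarly for the HN version. Since the Newton and Harder-Narasimhan stratifications on $\Fl(G,\mu^{-1})$ are both stable under the $G(\Q_p)$-action (hence under the induced $\ul{K^c}$-action), taking the quotient stack commutes with the stratifications in the obvious way.

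First, I would invoke the equivalence $(1)\Leftrightarrow (8)$ of Theorem \ref{T: fully HN}: the hypothesis that $B(G,\mu)$ is fully Hodge-Newton decomposable yields the equality of locally closed subspaces
\[\Fl(G,\mu^{-1})^{Newt=[b_0]}=\Fl(G,\mu^{-1})^{HN=[b_0]}\]
inside $\Fl(G,\mu^{-1})$, where $[b_0]\in B(G,\mu)$ is the (unique) basic element. Passing to associated diamonds and then taking the quotient by the \'etale action of $\ul{K^c}$ preserves this identity of open substacks of $[\ul{K^c}\setminus \Fl(G,\mu^{-1})^\Diamond]$.

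Next, I would pull back along $\pi_{HT}: \Sh_\K^\Diamond \to [\ul{K^c}\setminus \Fl(G,\mu^{-1})^\Diamond]$. Since $\pi_{HT}^{-1}$ preserves equalities of subspaces, we obtain immediately
\[\Sh_\K^{Newt=[b_0]}=\Sh_\K^{HN=[b_0]},\]
as desired. There is essentially no obstacle here beyond invoking the results: all the real work has already been done in establishing Theorem \ref{T: fully HN} (in particular the implication $(1)\Rightarrow(8)$, which rests on the dualities of Section \ref{section twin towers} together with \cite{CFS} Theorem 6.1) and in the construction of $\pi_{HT}$ via \cite{LZ} and \cite{DLLZ}. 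The only subtlety worth noting is the set-theoretic meaning of the pullback through a morphism of small $v$-stacks rather than of diamonds; but since the $\ul{K^c}$-action on the flag variety is pro-\'etale, pullback commutes with taking the quotient stack, so the argument goes through without modification.
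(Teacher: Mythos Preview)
Your proposal is correct and matches the paper's approach exactly: the paper simply states that Theorem \ref{T: fully HN} implies the corollary, and your argument spells out precisely this pullback along $\pi_{HT}$ of the equality $\Fl(G,\mu^{-1})^{Newt=[b_0]}=\Fl(G,\mu^{-1})^{HN=[b_0]}$ furnished by $(1)\Rightarrow(8)$.
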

Corollaries \ref{C: HN strata shimura} and \ref{C: full HN shimura} together imply that for fully Hodge-Newton decomposable Shimura varieties, the supercuspidal part of their cohomology concentrates on the (part contributed by) basic Newton strata. We will work out the details in a future work.
We refer the reader to \cite{F2} 9.7.2 for more speculations on cohomological applications related to the results above.
\begin{remark}
The readers who prefer diamonds can replace the above by the following considerations.
Let \[\Sh_{K^p}=\varprojlim_{K_p}\Sh_\K^\Diamond\] be the diamond of Shimura variety with infinite level at $p$ and prime-to-$p$ level $K^p$, on which we have a natural action of $G(\Q_p)$.
Then we get the 
Hodge-Tate period map of diamonds\footnote{If $(\mathbf{G},\mathbf{X})$ is of abelian type, then $\Sh_{K^p}$ is representable by a perfectoid space and $\pi_{HT}$ comes from a morphism of adic spaces over $E$, cf. \cite{Sh}. In the general case, see also \cite{Hansen1} for the morphisms of diamonds $\pi_{HT}$.} over $E$
\[\pi_{HT}: \Sh_{K^p} \lra \Fl(G,\mu^{-1})^\Diamond,\]which is $G(\Q_p)$-equivariant.
We can define Harder-Narasimhan strata and Newton strata on the diamond $\Sh_{K^p}$, 
which are inverse limits of the corresponding strata at finite levels. Corollaries \ref{C: HN strata shimura} and \ref{C: full HN shimura} admit the corresponding diamond versions.
\end{remark}

\end{document}